\documentclass[draft,12pt,a4paper]{article}
\usepackage{a4wide}
\usepackage{amssymb}
\usepackage{amsthm}
\usepackage[english]{babel}
\usepackage[utf8]{inputenc}
\usepackage{amsmath}
\usepackage{longtable}
\usepackage{url,verbatim}
\usepackage{multicol}

\newtheorem{thm}{Theorem}
\newtheorem{corollary}{Corollary}

\newtheorem{proposition}{Proposition}

\newtheorem{lemma}{Lemma}

\setlength{\textwidth}{164mm} 
\setlength{\textheight}{243mm}

\begin{document}

\title{Enumeration of Hypermaps of a Given Genus\footnote{The authors wish to
thank Alexander Mednykh, Roman Nedela and the referees for helpful suggestions
to improve the presentation of this article.}}

\author{ Alain Giorgetti$^1$\footnote{For this work Alain Giorgetti was supported
by the French ``Investissements d’Avenir'' program, project ISITE-BFC (contract ANR-15-IDEX-03).} $\quad$ Timothy~R.~S. Walsh$^2$\\
\\
$^1$ FEMTO-ST institute, Univ. Bourgogne Franche-Comt\'e, CNRS\\
 16 route de Gray, 25030 Besan\c{c}on cedex, France\\
\texttt{alain.giorgetti@femto-st.fr} \\
\phantom{vertical space}\\
$^2$ Department of Computer Science\\
 University of Quebec in Montreal (UQAM)\\
P.O.~Box~8888, Station A, Montreal, Quebec, Canada, HC3-3P8\\
\texttt{walsh.timothy@uqam.ca} 
}

\date{
\textit{
 ARS MATHEMATICA CONTEMPORANEA 15 (2018)}
 \url{https://amc-journal.eu/index.php/amc/article/view/1115/1221}}

\maketitle


\begin{abstract}
This paper addresses the enumeration of rooted and unrooted hypermaps of a given
genus. For rooted hypermaps the enumeration method consists of considering the
more general family of multirooted hypermaps, in which darts other than the root
dart are distinguished. We give functional equations for the generating series
counting multirooted hypermaps of a given genus by number of darts, vertices,
edges, faces and the degrees of the vertices containing the distinguished darts.
We solve these equations to get parametric expressions of the generating
functions of rooted hypermaps of low genus. We also count unrooted hypermaps of
given genus by number of darts, vertices, hyperedges and faces.
\end{abstract}


\noindent\textit{Keywords: Enumeration, Surface, Genus, Rooted hypermap,
Unrooted hypermap}


\noindent\textit{Math. Subj. Class.: 05C30, 05A15} 


\section{Introduction}
\label{intro:sec}
A \emph{(combinatorial) hypermap} is a triple $(D,R,L)$ where $D$ is a finite
set of \emph{darts}
 and $R$ and $L$ are permutations on $D$ such that the group $\langle
 R,L\rangle$ generated by $R$ and $L$ acts transitively on $D$.
 A \emph{(combinatorial ordinary) map} is a hypermap
$(D,R,L)$ whose permutation $L$ is a fixed-point-free involution on $D$. For a
hypermap (resp. map) the orbits of $R$, $L$ and $RL$ ($L$ followed by
$R$) are respectively called \emph{vertices}, \emph{hyperedges} (resp.
\emph{edges}) and \emph{faces}. The \emph{degree} of a vertex, edge, hyperedge
or face is the number of darts it contains. The equivalence of combinatorial maps and
topological maps having been established in~\cite{JoSi}, we use the word ``map'' to mean ``combinatorial
map'' throughout this paper.
 The \emph{genus} $g$ of a map is given by the Euler-Poincaré
 formula~\cite{Cox73}
\begin{equation}
v - e + f = 2(1 – g),  \label{euler:eq}
\end{equation} where $v$ is the number of vertices, $e$ is the number of edges
and $f$ is the number of faces.  The genus of a hypermap with $t$
darts, $v$ vertices, $e$ hyperedges and $f$ faces was defined in~\cite{Jac68} by the
formula
\begin{equation}
v + e + f = t + 2(1 – g). \label{genus:formula:hyper}
\end{equation}

An \emph{isomorphism} between two maps or hypermaps $(D, R, L)$ and $(D', R',
L')$ is a bijection from $D$ onto $D'$ that takes $R$ into $R'$ and $L$ into $L'$; it
corresponds to an orientation-preserving homeomorphism between two topological
maps.  A \emph{sensed} hypermap (resp. map) is an isomorphism class of hypermaps
(resp. maps). We admit the existence of a unique hypermap (resp. map) with an
empty set of darts $D$, called the \emph{empty} hypermap (resp. map). For both
of these objects $v = f = 1$ and $g = e = 0$. A \emph{rooted} hypermap (resp.
map) is either the empty hypermap (resp. map) or a tuple $(D,x,R,L)$ where
$(D,R,L)$ is a non-empty combinatorial hypermap (resp.
map) and $x \in D$ is a distinguished dart, called the \emph{root}.


The enumeration of maps and hypermaps has several non-trivial applications.  One
such application is based on the correspondence between hypermaps and algebraic
curves established by the Belyi theorem~\cite{LZ04}. For instance, the formula
for the number of plane trees was used by A. Zvonkin in the computer generation
of Shabat polynomials of bounded degree~\cite{LZ04}.  Another area where the map
enumeration plays an important role is theoretical physics, in particular in
2-dimensional gravitation models.  Roughly speaking, map enumeration is used to
compute matrix integrals determining the properties of gravitational fields (see
for instance the works of B. Eynard~\cite{Eynard2011}). Some hypermaps have been
shown to be related to contextuality in quantum physics~\cite{pghs15:ij}.
Also, A. Mednykh and R. Nedela have applied the enumeration of rooted (resp.
unrooted) hypermaps to the enumeration of subgroups (resp. conjugacy classes of
subgroups) of the triangle group with three generators $x$, $y$, $z$ and the
relation $xyz=1$~\cite{Mednykh2017}.

We enumerate rooted hypermaps of a given
genus by number of darts, vertices, hyperedges and faces.
To do so we consider more general families of rooted hypermaps and bipartite
maps, in which other vertices or darts than the root dart are distinguished.
We also use the genus-preserving bijection between hypermaps and
2-vertex-coloured bipartite maps presented in~\cite{Walsh75}. But since
bipartite maps have all their \textbf{faces} of even degree and we're using the
degrees of the \textbf{vertices} as parameters, we must instead study the
face-vertex dual of a 2-coloured bipartite map, that is, a map whose faces are
coloured in two colours (white and black) so that no two faces that share an
edge have the same colour.
All these maps are \textit{Eulerian} -- that is, all their vertices are of even
degree -- but not all Eulerian maps are 2-face-colourable. For example, the map on the
torus with one vertex, one face and two edges is Eulerian because its only
vertex is of degree 4, but its face cannot be coloured because it shares both
edges with itself. Therefore we call the maps we are studying
\emph{face-bipartite}.

A \emph{sequenced (rooted) map} is a rooted map with some vertices other than
the \emph{root vertex} (the vertex that contains the root) distinguished from
each other and from all the other vertices. The labels that distinguish these
vertices can be taken to be $1$, $2$, \ldots $k$, where $k$ is the number of
distinguished vertices. A \emph{sequenced (rooted) hypermap} is defined
similarly. We state (in Section~\ref{seq:hypermap:sec}) a bijective
decomposition for the set $\mathcal{H}(g,t,f,e,n,D)$ of sequenced orientable
hypermaps of genus $g$ with $t$ darts, $f$ faces and $e$ hyperedges, with the
root vertex of degree $n$ and with the sequence of degrees of the distinguished
vertices equal to $D = (d_1, d_2, \ldots d_{|D|})$, where $d_i$ is the degree of
the distinguished vertex with label $i$.
We obtain a bijective decomposition of the set $\mathcal{F}(g,e,w,b,n,D)$ of
sequenced orientable face-bipartite maps of genus $g$ with $e$ edges, $w$ white faces, $b$
black faces, with the root face of degree $2n$ and with the sequence of
half-degrees of the distinguished vertices equal to $D$. Then we apply
face-vertex duality to obtain a bijective decomposition of the corresponding set of
2-coloured bipartite maps with distinguished faces. Next we use the bijection
in~\cite{Walsh75} to obtain a bijective decomposition for hypermaps with distinguished faces,
and finally we again apply face-vertex duality to obtain a bijective decomposition of
$\mathcal{H}(g,t,f,e,n,D)$.

A \emph{mutirooted hypermap} is a hypermap in which a non-empty sequence of
 darts with pairwise distinct initial vertices is distinguished. We relate
multirooted hypermaps to sequenced
 hypermaps and thus obtain a recurrence for the number of
multirooted hypermaps and functional equations for the generating series
counting multirooted hypermaps of a given genus by number of darts, vertices,
edges, faces and the degrees of the initial vertices of the distinguished darts.

The paper is organized as follows. Section~\ref{background:sec} fixes some
notations, 
recalls a known decomposition for sequenced rooted maps and describes the
bijection between hypermaps and bipartite maps presented
in~\cite{Walsh75}. 
Sections~\ref{eulerian:sec} and~\ref{seq:hypermap:sec} respectively
enumerate sequenced face-bipartite maps and sequenced rooted hypermaps
of a given genus.
In Section~\ref{multi:hypermap:sec} we consider multirooted hypermaps and we
give equations for the generating functions that count these objects. In
Section~\ref{rooted:hyper:series:sec} we give functional equations relating the
generating functions for rooted hypermaps with that for multirooted hypermaps.
Then we show how to solve these equations.
In Section~\ref{hyp:series:explicit:small:genus:sec} we obtain parametric
expressions for the generating functions that count rooted hypermaps with a
given small positive genus.
Section~\ref{unrooted:hyp:sec} presents enumeration algorithms for sensed
unrooted hypermaps counted by number of darts, vertices and hyperedges.
Appendix \ref{rooted:appendix} (resp. \ref{unrooted:appendix}) contains a table
for numbers of rooted (resp. unrooted) hypermaps of genus $g$ with $d$ darts, $v$
vertices and $e$ hyperedges for $d \leq 14$.

\section{Background}
\label{background:sec}

\subsection{Notations}
We first introduce the notations and conventions we use throughout the
paper. Let $D$ and $D'$ be two lists of integers.
The inclusion $D' \subseteq D$ means that $D'$ is a sublist of $D$.
In this case $D - D'$ is the complementary sublist of $D'$ in $D$. For instance,
the sublists of $D = [1,1,2]$ are the empty list $[\,]$, $[1]$ (twice), $[2]$,
$[1,1]$, $[1,2]$ (twice) and $D$ itself. Their complementary sublists in the
same order are $D$, $[1,2]$ (twice), $[1,1]$,$[2]$, $[1]$ (twice) and $[\,]$. We
denote by $D.D'$ the concatenation of the lists $D$ and $D'$. If $i$ is an
integer and $D$ is a list of integers, then $i.D$ is a shortcut for $[i].D$. For
$1 \leq j \leq |D|$ we denote by $d_j$ the $j$-th element of the list $D$ of
length $|D|$ and by $D - \{d_j\}$ the list obtained from $D$ by removing its
$j$-th element $d_j$. Let $\rho$ be a positive integer. The abbreviation
$D_{1..\rho}$ denotes the list $[d_1, \ldots, d_{\rho}]$. The abbreviation
$v_{1..\rho}^{D_{1..\rho}}$ denotes $v_{1}^{d_{1}} \ldots v_{\rho}^{d_{\rho}}$.

The sign $+$ (resp. $\sum$) denotes (resp. generalized) disjoint set union in
the following decompositions and (resp. generalized) arithmetic sum in the
following equations. By convention, a disjoint set union (resp. sum) over an
empty domain is equal to the empty set (resp. zero).
For any logical formula $\varphi$ the notation $\Delta_{\varphi}$ means the
singleton set containing only the empty hypermap or map (depending on the
context) and the empty set if $\varphi$ is false. The notation
$\delta_{\varphi}$ means $1$ if $\varphi$ is true and $0$ if $\varphi$ is false.

\subsection{Bijective decomposition of the set of sequenced maps}
\label{erratum:sec}

In 1962 W. T. Tutte~\cite{Tut62} presented a bijective decomposition of a planar
map with all the vertices distinguished and a root in every vertex. In 1972 T.
R. Walsh and A. B. Lehman~\cite{W1} generalized this decomposition to maps of
higher genus and used it to count rooted maps of a given genus by number of
vertices and faces. 
In 1987 D. Arquès~\cite{Arq87a} used this latter decomposition to find a
closed-form formula for the number of rooted maps of genus 1 by number of
vertices and faces.
In 1991 E. A. Bender and E. A. Canfield~\cite{BC91}
presented a more efficient decomposition that roots only a single vertex and
distinguishes only as many other vertices as necessary and used it to obtain
explicit formulas for counting rooted maps of genus 2 and 3. In 1998 the first
author~\cite{gio98b} modified this decomposition and used it to obtain a
bijective decomposition satisfied by the set $\mathcal{M}(g,e,f,n,D)$ of
sequenced orientable maps of genus $g$ with $e$ edges and $f$ faces, with the
root vertex of degree $n$ and with $D$ the list of degrees of the
distinguished vertices was obtained in~\cite{gio98b}. Since this bijective
decomposition contains an error, we present the correct bijective decomposition
here, and we derive it to make the derivation more accessible than the contents
of a Ph. D. thesis.

\begin{thm}
The set $\mathcal{M}(g,e,f,n,D)$ of sequenced orientable maps of
genus $g$ with $e$ edges and $f$ faces, with the root vertex of degree $n$ and
with the list $D$ of degrees of the distinguished vertices is defined by the
bijective decomposition
\begin{eqnarray}
& & \mathcal{M}(g,e,f,n,D) = \nonumber \\ 
 & & \quad \sum_{
    \begin{scriptsize}
    \begin{array}{c} 
     g_1 + g_2 = g \\
     e_1 + e_2 = e - 1 \\
     f_1 + f_2 = f\\
     n_1+n_2 = n - 2 \\
     D_1 \subseteq D 
     \end{array}
    \end{scriptsize}
   } 
   \mathcal{M}(g_1,e_1,f_1,n_1,D_1) \times \mathcal{M}(g_2,e_2,f_2,n_2,D-D_1) \nonumber  \\
 & & \quad + 
 \sum_{p=1}^{n-3} \mathcal{M}(g-1,e-1,f,n-2-p,p.D)
 \times \{1,\ldots,p\} \label{correction:bij:orientable:edge} \\
 & & \quad + \sum_{p = n -1}^{p = 2e -2} \mathcal{M}(g,e-1,f,p,D) \nonumber  \\
 & & \quad + \sum_{j=1}^{|D|} 
 \mathcal{M}(g,e-1,f,d_j+n-2,D - \{d_j\}) + \Delta_{(g,e,f,n,D)=(0,0,1,0,[\,])}. \nonumber
\end{eqnarray}
\end{thm}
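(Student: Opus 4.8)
The plan is to establish the bijective decomposition by a case analysis on the effect of deleting a distinguished edge — specifically, the edge incident to the root dart — from a sequenced orientable map. This is the classical Tutte/Walsh–Lehman root-edge deletion argument, carried out carefully in the sequenced setting so as to track the genus, the number of edges and faces, the root-vertex degree, and the list $D$ of distinguished-vertex degrees. I would fix a nonempty sequenced map $M \in \mathcal{M}(g,e,f,n,D)$, let $x$ be its root dart, let $y = L(x)$ be the other dart of the root edge, and consider the map $M'$ obtained by deleting the root edge (removing $x$ and $y$ from $D$ and adjusting $R$ and $L$ accordingly). The root of $M'$ is taken to be $R(x)$ if $R(x)\neq x$, i.e.\ the dart following $x$ in the root vertex; the degenerate subcase $R(x)=x$ (root vertex of degree $1$, a pendant loop-free situation) has to be handled, which is why the additive generators shift $n$ by $-2$ and the base term $\Delta_{(g,e,f,n,D)=(0,0,1,0,[\,])}$ appears.

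**Next I would** split into the standard four mutually exclusive and exhaustive cases according to the combinatorial type of the root edge. (i) The root edge is an \emph{isthmus} whose removal disconnects the map into two components: this yields the first (convolution) sum, where the genus, edges, faces, residual root-vertex degree $n_1+n_2=n-2$, and the distinguished vertices $D=D_1 \sqcup (D-D_1)$ all split additively between the two pieces. (ii) The root edge is a \emph{non-separating handle} edge (its removal keeps the map connected but drops the genus by $1$): deleting it splits the root vertex into two vertices whose degrees sum to $n-2$; one of these two becomes the new root vertex and the other is adjoined to the distinguished sequence as a vertex of degree $p$, with a choice of $p$ rootings recorded by the factor $\{1,\dots,p\}$, giving the term $\sum_{p=1}^{n-3}\mathcal{M}(g-1,e-1,f,n-2-p,p.D)\times\{1,\dots,p\}$. (iii) The root edge is a \emph{loop} at the root vertex that does \emph{not} change the genus (a planar-type loop separating the rotation at the root vertex): removing it merges the rest of the rotation into a single root vertex of some degree $p$ with $n-1 \le p \le 2e-2$, yielding $\sum_{p=n-1}^{2e-2}\mathcal{M}(g,e-1,f,p,D)$. (iv) The root edge is a \emph{link} joining the root vertex to a \emph{distinguished} vertex, of degree $d_j$ for some $j$: contracting/deleting merges them into a new root vertex of degree $d_j+n-2$ and removes $d_j$ from $D$, giving $\sum_{j=1}^{|D|}\mathcal{M}(g,e-1,f,d_j+n-2,D-\{d_j\})$. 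Conversely, each generator on the right-hand side, together with its extra combinatorial data (the choice in $\{1,\dots,p\}$, the choice of $D_1$, the index $j$, or the value $p$), reconstructs a unique sequenced map in $\mathcal{M}(g,e,f,n,D)$ by re-inserting the root edge; I would spell out this inverse map in each case to confirm the correspondence is a genuine bijection and not merely a counting identity.

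**The delicate points** that need care — and where the error in~\cite{gio98b} presumably lies — are the exact ranges of the summation indices and the genus bookkeeping. I would verify via~\eqref{euler:eq} that in case (iii) the surviving root vertex can have degree as large as $2e-2$ (all remaining darts) and as small as $n-1$, and that in case (ii) the constraint $1 \le p \le n-3$ is forced by requiring both resulting vertices to have degree $\ge 1$ after the split of a degree-$n$ vertex minus the two deleted darts; an off-by-one in either bound, or an omission of the $\{1,\dots,p\}$ multiplicity in the handle case, is the kind of slip being corrected. I would also check that the \emph{link to a non-distinguished, non-root vertex} does \emph{not} contribute a separate term: deleting such an edge merges an undistinguished vertex into the root vertex, but this is exactly what is already accounted for in case (iii)'s range — or, depending on the precise conventions of~\cite{gio98b}, such a configuration is ruled out by the sequencing convention. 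Reconciling these conventions with the decomposition as stated is the main obstacle; once the case analysis and the ranges are pinned down, the bijectivity in each case is routine re-insertion of the root edge.
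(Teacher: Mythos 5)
Your overall strategy (reduce the root edge, split into four reversible cases, plus the edgeless base case) is the same as the paper's, but the assignment of root-edge configurations to the four terms is garbled in a way that would not reproduce the stated formula. In the paper the reduction is the face--vertex dual of root-edge deletion: if the root edge is a \emph{loop} (both darts in the root vertex), it is deleted and the root vertex is split into two vertices of degrees $n_1$ and $n_2$ with $n_1+n_2=n-2$; the convolution term is the subcase where this splitting \emph{disconnects} the map, and the genus-lowering term is the subcase where it does not. If the root edge is a \emph{link}, it is \emph{contracted}, merging the root vertex with the other endpoint $s_2$; the term $\sum_{p=n-1}^{2e-2}\mathcal{M}(g,e-1,f,p,D)$ is the subcase where $s_2$ is \emph{not} distinguished (so $p=n-2+\deg s_2$ ranges over $n-1,\dots,2e-2$), and the last sum is the subcase where $s_2$ is distinguished. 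Your case (i) attributes the convolution term to an \emph{isthmus}, i.e.\ a link: but deleting a link removes only one dart from the root vertex, leaving it with degree $n-1$ in a single piece, so the bookkeeping $n_1+n_2=n-2$ you assert cannot arise there; that splitting forces the root edge to be a loop. Symmetrically, your case (iii) attributes the range $n-1\le p\le 2e-2$ to removing a ``planar-type loop,'' but deleting a loop without splitting leaves the root vertex with the single degree $n-2$, not a range; that range comes precisely from contracting a link onto an undistinguished vertex.

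The consequence is that your case analysis is neither exclusive nor exhaustive: you have no correct home for the link joining the root vertex to an undistinguished, non-root vertex, and your closing remark that such edges are ``already accounted for in case (iii)'s range'' or ``ruled out by the sequencing convention'' is exactly where the argument breaks --- they are the sole source of the third term, via contraction. Until the loop/link dichotomy (delete-and-split versus contract) is set up correctly, the claimed reversibility of each case and the index ranges $1\le p\le n-3$ and $n-1\le p\le 2e-2$ cannot be verified, so the proposal as written does not establish the decomposition.
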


\begin{proof}
If a map $m$ has at least one edge, we reduce by $1$ the number of edges by the
face-vertex dual of deleting the root edge. There are two cases of this
operation, depending upon whether the root edge is a loop or a link, and each of
these cases breaks down into two sub-cases.

\noindent \textbf{Case 1: The root edge is a loop.} We delete the root edge and split
the root vertex into two parts, $s_1$ and $s_2$. If $r$ is the root, then $s_1$
consists of the darts $R(r)$, $R^2(r)$, \ldots, $R^{-1}(L(r))$ and $s_2$
consists of the darts $R(L(r))$, $R^2(L(r))$, \ldots, $R^{-1}(r)$. This case
breaks down into two cases, depending upon whether or not this operation
disconnects the map.

\textbf{Case 1a: This operation disconnects the map into two maps}, $m_1$
containing $s_1$ and $m_2$ containing $s_2$. If $m_1$ has at least $1$ edge, its
root is $r_1 = R(r)$, and if $m_2$ has at least $1$ edge, its root is $r_2 =
R(L(r))$. Let $g_1$, $e_1$, $f_1$, $n_1$, $D_1$ and $g_2$, $e_2$, $f_2$, $n_2$,
$D_2$ be the parameters of the maps $m_1$ and $m_2$, respectively, corresponding
to $g$, $e$, $f$, $n$, $D$.
This operation reduces by $1$ the total number of edges; so $e_1 + e_2 = e – 1$.
It leaves unchanged the total number of faces because $r$ and $L(r)$ simply get
deleted from the cycle(s) of $RL$ ($L$ followed by $R$) containing them; so $f_1
+ f_2 = f$. It increases by $1$ the total number of vertices; so from Formula
(\ref{euler:eq}), which relates the genus of a map
to the number of its vertices, faces and edges, it can easily be deduced that
$g_1 + g_2 = g$. It decreases by $2$ the total number of darts in $s_1$ and $s_2$ since $r$ and
$L(r)$, which belonged to the root vertex, get eliminated; so $n_1 + n_2 = n –
2$. Finally, $D_1$ can be any sublist of $D$ and $D_2$ is just the
complementary sublist, denoted by $D - D_1$. This operation is uniquely
reversible; so the set of ordered pairs of sequenced maps obtained in this case is 

\begin{equation}
\sum_{
    \begin{scriptsize}
    \begin{array}{c} 
     g_1 + g_2 = g \\
     e_1 + e_2 = e - 1 \\
     f_1 + f_2 = f \\
     n_1+n_2 = n - 2 \\
     D_1 \subseteq D 
    \end{array}
    \end{scriptsize}
   } 
   \mathcal{M}(g_1,e_1,f_1,n_1,D_1) \times  \mathcal{M}(g_2,e_2,f_2,n_2,D-D_1),
\label{case1a:map:term}
\end{equation}
   
\noindent where $\Sigma$ means the union of disjoint sets.

\textbf{Case 1b: This operation does not disconnect the map}, but instead turns
it into a new map $m'$ with $e – 1$ edges and $f$ faces and, since the number of
vertices increases by $1$, the genus of  $m'$ is $g - 1$, so that this case only
occurs when $g \geq 1$. Neither $s_1$ nor $s_2$ can be of degree $0$ (otherwise
the map would be disconnected); so we can choose for $m'$ the root $r_1 = R(r)$
belonging to $s_1$. Let $p$ be the degree of $s_2$. Since the sum of the degrees
of $s_1$ and $s_2$ is $n – 2$, the degree of $s_1$, the root vertex, is $n – 2 –
p$. We distinguish the vertex $s_2$ so that this operation can be   reversed,
and we put its degree $p$ at the beginning of the list $D$, turning it into
$p.D$. Now this operation is reversible in $p$ distinct ways, since any of the
$p$ darts of $s_2$ can be chosen to be $R(L(r))$ when we merge the vertices
$s_1$ and $s_2$ and replace the deleted root edge. Now $p$ can be any integer
from $1$ up to $n – 3$ (so that $n – 2 – p \geq 1$). For both $p$ and $n –
2 – p$ to be at least $1$, $n$ must be at least $4$. The set of sequenced maps
obtained in this case is

\begin{equation}
\sum_{p=1}^{n-3} \mathcal{M}(g-1,e-1,f,n-2-p,p.D) \times \{1,\ldots,p\}.
 \label{case1b:map:term}
\end{equation}

\noindent \textbf{Case 2: The root edge is a link.} We contract the root edge,
merging its two incident vertices $s_1$  containing the root $r$ and $s_2$
containing $L(r)$ into a single vertex $s$ with root $R(r)$. This operation
decreases by $1$ the number of edges and doesn't change the number of faces,
since $r$ and $L(r)$ simply get deleted from the cycle(s) containing them. Since
the number of vertices is decreased by $1$, the genus remains the same. This
case breaks down into two sub-cases, depending upon whether or not $s_2$ is one
of the distinguished vertices.

\subparagraph{Case 2a: The vertex $s_2$ is not one of the distinguished
vertices.} Let $p$ be the degree of the new vertex $s$. Then $p = n –2 +$ the
degree of $s_2$, and since the degree of $s_2$ must be at least $1$,  we have $p
\geq  n – 1$. Also, the new map has $2e – 2$ darts; so $p \leq  2e – 2$. This
operation is uniquely reversible for each value of $p$; so the set of maps so
obtained is

\begin{equation}
\sum_{p = n -1}^{p = 2e-2} \mathcal{M}(g,e-1,f,p,D).
\label{case2a:map:term}
\end{equation}

\subparagraph{Case 2b: The vertex $s_2$ is one of the distinguished vertices.}
It can be any one of the $|D|$ distinguished vertices. If it is the $j$th
distinguished vertex, then its degree is $d_j$. Then since it  gets merged with
$s_1$ into the new root vertex, $d_j$ gets dropped from $D$. Finally, the degree
of $s$ is $d_j + n –2$. This operation too is uniquely reversible; so the set of
maps so obtained is

\begin{equation}
 \sum_{j=1}^{|D|} 
 \mathcal{M}(g,e-1,f,d_j+n-2,D - \{d_j\}).  
\label{case2b:map:term}
\end{equation}

Finally, suppose that $m$ has no edges. It is of genus $0$, has $1$ face,
its one vertex is of degree $0$ and its list $D$ is empty because it has no
distinguished vertices; so it constitutes the  singleton 
\begin{equation}
\Delta_{(g,e,f,n,D)=(0,0,1,0,[\,])}. 
\label{case3:map:term}
\end{equation}

Then $\mathcal{M}(g,e,f,n,D)$ is the disjoint union of the sets given by
(\ref{case1a:map:term})-(\ref{case3:map:term}).
\end{proof}

\subsection{Bipartite maps and hypermaps}
\label{bipartite:hyper:sec}
To motivate the transformation of (\ref{case1a:map:term})-(\ref{case3:map:term})
into the corresponding equations for sequenced hypermaps we briefly describe the
bijection in~\cite{Walsh75} that takes a hypermap $h$ into a 2-coloured
bipartite map $m = I(h)$, its \textit{incidence map}. The bijection $I$ takes
the darts, vertices and hyperedges of $h$ into the edges, white vertices and black vertices
of $m$.
A root (distinguished dart) of $h$ corresponds to a distinguished \textbf{edge}
of $m$; to make it correspond to a root of $m$ we impose the condition that a
root of $m$ belongs to a white vertex. The permutation $R$ in $h$ corresponds to
$R$ in $m$ acting on a dart in a white vertex and the permutation $L$ in $h$
corresponds to $R$ in $m$ acting on a dart in a black vertex. The permutation
$L$ in $m$ doesn't correspond to any permutation in $h$; rather, since it takes
a dart belonging to a vertex of one colour into a dart belonging to a vertex of
the opposite colour, it toggles $R$ in $m$ between $R$ and $L$ in $h$. A face
(cycle of $RL$) in $h$ corresponds to a face in $m$ with twice the degree. To
see this, we follow one application of $RL$ in $h$ starting with a dart $d$,
which corresponds to an edge in $m$ but we make it correspond to the dart $d'$
in that edge that also belongs to a white vertex. Then the $L$ in $h$ takes $d'$
first into $L(d')$, which belongs to a black vertex, and then into $RL(d')$ and
the following $R$ in $h$ takes $RL(d')$ first into $LRL(d')$, which belongs to a
white vertex, and then into $RLRL(d')$. Since the genus of a hypermap with $t$
darts, $v$ vertices, $e$ hyperedges and $f$ faces is defined
by~(\ref{genus:formula:hyper}), $m$ has the same genus as $h$.

Since the root of an incidence map of a rooted hypermap must belong to a white
vertex, we impose the condition on a rooted 2-face-coloured face-bipartite map
that the root belong to a white face and we transform
(\ref{case1a:map:term})-(\ref{case3:map:term}) into the corresponding bijective
decomposition for these maps.

\section{Sequenced face-bipartite maps}
\label{eulerian:sec}
Let $\mathcal{F}(g,e,w,b,n,D)$ be the set of sequenced orientable face-bipartite
maps of genus $g$ with $e$ edges, $w$ white faces, $b$ black faces, with the
root face of degree $2n$ and with the list of half-degrees of the
distinguished vertices equal to $D$. For any dart $d$ we denote by $f(d)$ the
face containing $d$ and we note that the face $f(R(d)) = f(L(d))$ must have the
opposite colour from $f(d)$ because those two faces share the edge $\{d,
L(d)\}$.

\begin{thm}\label{face:bipartite:th}
The set $\mathcal{F}(g,e,w,b,n,D)$  satisfies the bijective
decomposition

\begin{eqnarray}
& &\mathcal{F}(g,e,w,b,n,D) = \nonumber \\ 
 & & \quad \sum_{
    \begin{scriptsize}
    \begin{array}{c}
     g_1 + g_2 = g \\
     e_1 + 	e_2 = e - 1 \\
     w_1 + b_2 = b \\
     w_2 + b_1 = w \\
     n_1+n_2 = n - 1 \\
     D_1 \subseteq D 
    \end{array}
    \end{scriptsize}
   } 
   \mathcal{F}(g_1,e_1,w_1,b_1,n_1,D_1) \times \mathcal{F}(g_2,e_2,w_2,b_2,n_2,D-D_1) \nonumber 
   \\
 & & \quad + 
          \sum_{p=1}^{n-2} \mathcal{F}(g-1,e-1,b,w,n-1-p,p.D)
 \times \{1,\ldots,p\} \label{bij:eulerian} \\
 & & \quad + \sum_{p = n}^{p = e-1} \mathcal{F}(g,e-1,b,w,p,D) \nonumber  \\
 & & \quad + \sum_{j=1}^{|D|} 
 \mathcal{F}(g,e-1,b,w,d_j+ n-1,D - \{d_j\}) + \Delta_{(g,e,w,b,n,D)=(0,0,1,0,0,[\,])}. \nonumber 
\end{eqnarray}
\end{thm}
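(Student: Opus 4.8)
The plan is to follow the proof of Theorem~1 step by step, applying to a sequenced face-bipartite map $m\in\mathcal F(g,e,w,b,n,D)$ with at least one edge the same ``face--vertex dual of deleting the root edge'' operation, and carrying along, besides the parameters handled there, the two-colouring of the faces; as announced in Section~\ref{bipartite:hyper:sec}, this amounts to transforming the four cases~(\ref{case1a:map:term})--(\ref{case3:map:term}) into their face-bipartite analogues. Exactly as before, the operation lowers the edge count by one and splits according to whether the root edge is a loop or a link, each of these into two sub-cases, with the edgeless map treated separately; the five summands of~(\ref{bij:eulerian}) arise, in order, from (i) the loop case in which the operation disconnects $m$ into an ordered pair of sequenced face-bipartite maps; (ii) the loop case in which $m$ stays connected, the genus dropping by one and the set $\{1,\dots,p\}$ recording the ways to reverse the operation; (iii) the link case in which the vertex merged with the root vertex is not distinguished; (iv) the link case in which it is the $j$-th distinguished vertex; and (v) the edgeless map, which has $g=0$, no edge, a single white face of degree $0$, no black face and no distinguished vertex, hence forms the singleton $\Delta_{(g,e,w,b,n,D)=(0,0,1,0,0,[\,])}$.

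Two points need genuinely new work. First, one must check that the operation stays inside the class of sequenced face-bipartite maps. No face ever merges with another or splits --- $r$ and $L(r)$ are merely dropped from the cycles of $RL$ that contain them, and those two faces have opposite colours, hence are distinct --- so the proper two-colouring is inherited by the faces of the output maps, and every vertex remains of even degree: contracting a link merges two even-degree vertices into one of even degree, while, when a loop at the root vertex is deleted and the root vertex split, both parts have even degree because the face colours alternate around a vertex and the two darts of the loop lie in faces of opposite colour, so they occupy positions of opposite parity. Second comes the colour bookkeeping. Since the root must lie in a white face, re-rooting after the operation forces a global interchange of the two face colours on exactly one of the output maps --- on one of the two pieces in the disconnecting case (which is why the conditions read $w_1+b_2=b$ and $w_2+b_1=w$, and not $w_1+w_2=w$, $b_1+b_2=b$), and on the single resulting map in each of the other three cases (which is why $\mathcal F(g-1,e-1,b,w,\dots)$ and $\mathcal F(g,e-1,b,w,\dots)$ appear with $w$ and $b$ interchanged). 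The remaining parameter relations then follow, as in Theorem~1, from~(\ref{euler:eq}) applied to $m$ as an ordinary map together with the accounting of half-degrees; because $n$ is half the degree of the root face, and the distinguished and merged vertices, as well as the positive-degree conditions, are all measured in half-degrees, the index bounds shift accordingly --- for instance $n-2\rightsquigarrow n-1$ in the first sum, the upper bound $n-3\rightsquigarrow n-2$ of the second sum, and $2e-2\rightsquigarrow e-1$ in the third --- all of which must be rederived from scratch.

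The main obstacle is precisely this colour-and-parity accounting: one must determine, in each case and sub-case, which output map has its colours swapped, verify that the resulting identities on $(g,e,w,b,n,D)$ are exactly those in~(\ref{bij:eulerian}), and recheck every index range --- several of which differ from Theorem~1 only by a vertex degree being replaced by a face half-degree, a place where an off-by-one slip or a missed colour swap is easy to introduce. Everything else transcribes from the proof of Theorem~1 with only cosmetic changes: the unique reversibility of the operation in each case, the count $p$ of reversals in the genus-dropping case (here, of the $2p$ darts of the absorbed part, only the $p$ in the correctly coloured position can serve as the dart at which the loop is reattached when the operation is reversed), and the final assembly of $\mathcal F(g,e,w,b,n,D)$ as the disjoint union of the five sets just described.
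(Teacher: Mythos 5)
Your proposal is correct and follows essentially the same route as the paper's own proof: transcribe the four cases of Theorem~1, swap the colours of exactly the piece whose new root lands in a black face (yielding $w_1+b_2=b$, $w_2+b_1=w$ in the disconnecting case and the $w\leftrightarrow b$ exchange in the other three), and recompute the index bounds from the halved degrees. You in fact make explicit two points the paper leaves tacit --- that the operations preserve the face-bipartite property, and that only $p$ of the $2p$ darts of $s_2$ are colour-admissible reattachment positions, which is why the multiplicity stays $p$ --- so nothing further is needed.
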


\begin{proof}
\noindent \textbf{Case 1: The root edge is a loop.} By definition, $f(r)$, where $r$ is
the root of the map $m$, is white, so that since $r_1 = R(r)$, $f(r_1)$ must be
black. But when the loop is removed and the vertex $s$ containing $r$ is split,
$r_1$ becomes a root; so $f(r_1)$ must change colour and so must all the faces
of the new map $m'$ (in case 1b) or the map $m_1$ containing $r_1$ (in case 1a). 
 In case 1a, the other map $m_2$ has $r_2 = RL(r)$ as a root and $f(r_2)$ is
white; so its faces stay the same colour. This implies that in case 1a $w_1 +
b_2 = b$ and $w_2 + b_1 = w$, whereas in case 1b $w$ and $b$ switch in going
from $m$ to $m'$.
    
In case 1a, we have, as for general maps, $g_1 + g_2 = g$, $e_1 + e_2 = e – 1$
and $D_1$ is any subset of $D$, but instead of $n_1 + n_2 = n - 2$ we have $n_1
+ n_2 = n -1$ because the degrees satisfy the equation $ 2n_1 + 2n_2 = 2n - 2$.
  The analogue of formula (\ref{case1a:map:term}) is thus

\begin{equation}
\sum_{
    \begin{scriptsize}
    \begin{array}{c}
     g_1 + g_2 = g \\
     e_1 + e_2 = e - 1 \\
     w_1 + b_2 = b \\
     w_2 + b_1 = w \\
     n_1+n_2 = n - 1 \\
     D_1 \subseteq D 
    \end{array}
    \end{scriptsize}
   } 
   \mathcal{F}(g_1,e_1,w_1,b_1,n_1,D_1) \times \mathcal{F}(g_2,e_2,w_2,b_2,n_2,D-D_1).
\label{case1a:face:bipartite:term}
\end{equation}

In case 1b, the reduced map $m'$ is still of genus $g – 1$ and has $e – 1$
edges, but the degree of $s_2$ is now $2p$ instead of $p$ and the degree of the
new root vertex $s_1$ is $2(n – 1 – p)$; so the parameter $n – 2 – p$ in
(\ref{case1b:map:term}) changes to $n – 1 – p$. Also, $1 \leq  2p \leq  2n-3$,
but since $2p$ is even, we have $1 \leq  p \leq  n – 2$ instead of $1 \leq  p
\leq n –3$, and the condition that $n \geq  4$ changes to $n \geq  3$. The
analogue of formula (\ref{case1b:map:term}) is thus

\begin{equation}
         \sum_{p=1}^{n-2} \mathcal{F}(g-1,e-1,b,w,n-1-p,p.D)
 \times \{1,\ldots,p\}.
\label{case1b:face:bipartite:term}
\end{equation}

\noindent \textbf{Case 2: The root edge is a link.} Since the new root $R(r)$
belongs to a black face, all the faces change colour; so $b$ and $w$ switch.

In case 2a, we have $2n – 1 \leq  2p \leq  2e – 2$, but since $2p$ is even, we
now have $n \leq  p \leq  e – 1$; so the analogue of (\ref{case2a:map:term})
is 

\begin{equation}
\sum_{p = n}^{p = e-1} \mathcal{F}(g,e-1,b,w,p,D).
\label{case2a:face:bipartite:term}
\end{equation}

In case 2b, the degree of the new root vertex is $2d_j + 2n - 2$; so the
analogue of (\ref{case2b:map:term}) is

\begin{equation}
\sum_{j=1}^{|D|}  \mathcal{F}(g,e-1,b,w,d_j+ n-1,D - \{d_j\}).
\label{case2b:face:bipartite:term}
\end{equation}

Finally, the map with no edges has one white face and no black ones; so the
analogue of (\ref{case3:map:term}) is

\begin{equation}
\Delta_{(g,e,w,b,n,D)=(0,0,1,0,0,[\,])}.
\label{case3:face:bipartite:term}
\end{equation}
\end{proof}

After deriving this bijective decomposition, we became aware of the
article~\cite{DOPS14v1}, which presents a similar bijective decomposition but
for multi-rooted face-bipartite maps, which are like sequenced face-bipartite
maps except that every distinguished vertex has a root.  However, we present our
derivation here for several reasons:
it makes our article self-contained, we obtained it independently
of~\cite{DOPS14v1} and our main purpose is to count hypermaps rather than
face-bipartite maps.  Now~\cite{DOPS14v1} does present a construction that
converts a hypermap into a face-bipartite map.  However, that construction is
not proved and it is far more complicated than the one in~\cite{Walsh75}, which
is not cited in~\cite{DOPS14v1}. We also recently became aware of the
article~\cite{CF15}, which generalizes the results of~\cite{KZ15} by computing
the generating functions for edge-labelled bipartite maps on an orientable
surface of genus $g$ with an unbounded number of faces and including the degrees
of these faces as parameters.

\section{Sequenced rooted hypermaps}
\label{seq:hypermap:sec}

Theorem~\ref{face:bipartite:th} holds for rooted 2-coloured bipartite maps with
distinguished faces, where $e$ is the number of edges, $w$ is the number of
white vertices, $b$ is the number of black vertices, $n$ is half the degree of
the root face and $D$ is the list of half-degrees of the distinguished
faces.  By the bijection described in Section~\ref{bipartite:hyper:sec}, it also
holds for rooted hypermaps with distinguished faces, where $e$ is the number of
darts, $w$ is the number of vertices, $b$ is the number of hyperedges, $n$ is
the degree of the root face and $D$ is the list of degrees of the
distinguished faces. By duality, the theorem also holds for sequenced hypermaps,
where $e$ is the number of darts, $w$ is the number of faces, $b$ is the number
of hyperedges, $n$ is the degree of the root vertex and $D$ is the list of
degrees of the distinguished vertices. To make the letters correspond to the
objects they represent, we change $\mathcal{F}$ to $\mathcal{H}$, $e$ to $t$,
$w$ to $f$ and $b$ to $e$. We thus obtain the following results.

\begin{thm}[Bijective decomposition for sequenced hypermaps]
\label{seq:hyper:th}
Let $\mathcal{H}(g,t,f,e,n,D)$  be the set of sequenced orientable hypermaps of
genus $g$ with $t$ darts, $f$ faces and $e$ hyperedges, with the root vertex of
degree $n$ and with the list of degrees of the distinguished vertices equal
to $D = (d_1, d_2, \ldots d_{|D|})$, where $d_i$ is the degree of the
distinguished vertex with label $i$. The set $\mathcal{H}(g,t,f,e,n,D)$
satisfies the bijective decomposition

\begin{eqnarray}
& & \mathcal{H}(g,t,f,e,n,D) = \nonumber \\
 & & \quad \sum_{
    \begin{scriptsize}
    \begin{array}{c}
     g_1 + g_2 = g \\
     t_1 + t_2 = t - 1 \\
     f_1 + e_2 = e \\
     f_2 + e_1 = f \\
     n_1+n_2 = n - 1 \\
     D_1 \subseteq D 
    \end{array}
    \end{scriptsize}
   } 
   \mathcal{H}(g_1,t_1,f_1,e_1,n_1,D_1) \times \mathcal{H}(g_2,t_2,f_2,e_2,n_2,D-D_1) \nonumber 
   \\
 & & \quad + 
         \sum_{p=1}^{n-2} \mathcal{H}(g-1,t-1,e,f,n-1-p,p.D)
 \times \{1,\ldots,p\} \label{bij:seq:hypermap} \\
 & & \quad + \sum_{p = n}^{p = t-1} \mathcal{H}(g,t-1,e,f,p,D) \nonumber  \\
 & & \quad + \sum_{j=1}^{|D|} \mathcal{H}(g,t-1,e,f,d_j+ n-1,D - \{d_j\}) 
  + \Delta_{(g,t,f,e,n,D)=(0,0,1,0,0,[\,])}. \nonumber 
\end{eqnarray}
\end{thm}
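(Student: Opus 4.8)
The plan is to derive the decomposition above as a corollary of Theorem~\ref{face:bipartite:th}, along the chain of bijections sketched in the paragraph preceding the statement, which I would spell out as a composition of three bijections. First, face-vertex duality of maps identifies the set of sequenced orientable face-bipartite maps with the set of rooted orientable 2-coloured bipartite maps carrying a labelled list of distinguished faces: it fixes the genus and the number of edges, carries white (resp.\ black) faces to white (resp.\ black) vertices, carries the degree of the root vertex to the degree of the root face and the list of half-degrees of the distinguished vertices to the list of half-degrees of the distinguished faces, and --- using the convention built into Theorem~\ref{face:bipartite:th} that the root face be white --- carries the white root face to a white root vertex, as the bijection of~\cite{Walsh75} requires. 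Second, the incidence-map bijection $I$ of~\cite{Walsh75}, recalled in Section~\ref{bipartite:hyper:sec}, is a genus-preserving bijection between hypermaps and 2-coloured bipartite maps; decorated with a root and a labelled list of distinguished faces on both sides it identifies rooted hypermaps carrying distinguished faces with rooted 2-coloured bipartite maps carrying distinguished faces, sending darts to edges, vertices to white vertices, hyperedges to black vertices, and each face of the hypermap to a face of the incidence map of exactly twice the degree. Third, face-vertex duality of hypermaps --- which leaves the hyperedges and the permutation $L$ unchanged and interchanges vertices with faces --- identifies rooted hypermaps carrying distinguished faces with sequenced hypermaps. Composing the three yields, for every tuple, a bijection from $\mathcal{F}(g,e,w,b,n,D)$ onto a set of sequenced hypermaps, and since every set on the right-hand side of Theorem~\ref{face:bipartite:th} is of this form, applying the composite termwise turns that set identity into an identity whose left-hand side is a set of sequenced hypermaps.

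The remaining work is purely parameter bookkeeping. Running a member of $\mathcal{F}(g,e,w,b,n,D)$ through the chain produces successively: a 2-coloured bipartite map of genus $g$ with $e$ edges, $w$ white vertices, $b$ black vertices, root face of degree $2n$ and distinguished faces of half-degree list $D$; then, under $I^{-1}$, a hypermap of genus $g$ with $e$ darts, $w$ vertices, $b$ hyperedges, root face of degree $n$ (the degree is halved because incidence-map faces have twice the degree of the corresponding hypermap faces) and distinguished faces of degree list $D$; then, under the hypermap duality, a sequenced hypermap of genus $g$ with $e$ darts, $w$ faces, $b$ hyperedges, root vertex of degree $n$ and distinguished vertices of degree list $D$. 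In the notation of the statement, whose second, third and fourth arguments count darts, faces and hyperedges, this is exactly $\mathcal{H}(g,e,w,b,n,D)$. Hence the composite bijection carries $\mathcal{F}(g,e,w,b,n,D)$ onto $\mathcal{H}(g,e,w,b,n,D)$; substituting this into the equation of Theorem~\ref{face:bipartite:th} and then renaming $\mathcal{F}$ to $\mathcal{H}$, $e$ to $t$, $w$ to $f$ and $b$ to $e$ reproduces the displayed equation term for term --- the constraints $e_1+e_2=e-1$, $w_1+b_2=b$, $w_2+b_1=w$ become $t_1+t_2=t-1$, $f_1+e_2=e$, $f_2+e_1=f$, the summation ranges $\sum_{p=n}^{e-1}$ become $\sum_{p=n}^{t-1}$, and so on --- while bijectivity is inherited from Theorem~\ref{face:bipartite:th} together with the three bijections above.

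The point I expect to require the most care, and the only place where anything can go wrong, is the consistent treatment of the ``factor of two'' and of the colour conventions along the chain. For the degrees: recording the root degree as $2n$ and the distinguished-vertex degrees as a list $D$ of half-degrees on the face-bipartite side is exactly what makes the hypermap emerge with root degree $n$ and distinguished-vertex degrees $D$, because a hypermap face has half the degree of the corresponding incidence-map face while the two dualities only exchange the words ``vertex'' and ``face''. For the colours: $I$ is defined on a rooted 2-coloured bipartite map only when the root lies in a white vertex, and under the first duality this is precisely the requirement that the root face of the face-bipartite map be white --- the convention already built into Theorem~\ref{face:bipartite:th} and used in its proof. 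With the genus-preservation of $I$ recalled in Section~\ref{bipartite:hyper:sec} and that of duality standard, nothing else is needed, and the proof reduces to verifying that each of the five summands of Theorem~\ref{face:bipartite:th} is carried to the corresponding summand of the statement.
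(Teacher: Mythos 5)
Your proof is correct and is essentially the paper's own argument: the paper derives Theorem~\ref{seq:hyper:th} from Theorem~\ref{face:bipartite:th} by exactly the same chain (face--vertex duality to pass to rooted 2-coloured bipartite maps with distinguished faces, the incidence-map bijection of~\cite{Walsh75} to pass to hypermaps with distinguished faces, then hypermap duality), followed by the same renaming of parameters. You merely spell out the degree-halving and white-root conventions in more detail than the paper does, which is harmless.
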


\begin{corollary}[Recurrence between numbers of sequenced hypermaps]
\label{hyp:sec:rec:number:cor}
Let $H(g,t,f,e,n,D)$ be the number of rooted sequenced hypermaps of genus $g$
with $t$ darts, $f$ faces and $e$ hyperedges such that the root vertex is of
degree $n$ and $D$ is the list of degrees of the distinguished vertices.
Then $H(0,0,1,0,0, [\,]) = 1$ and if $t \geq 1$, then 

\begin{eqnarray}
& & H(g,t,f,e,n,D) = \nonumber \\
 & & \quad \sum_{
    \begin{scriptsize}
    \begin{array}{c}
     g_1 + g_2 = g \\
     t_1 + t_2 = t - 1 \\
     f_1 + e_2 = e \\
     f_2 + e_1 = f \\
     n_1+n_2 = n - 1 \\
     D_1 \subseteq D 
    \end{array}
    \end{scriptsize}
   } 
   H(g_1,t_1,f_1,e_1,n_1,D_1) \, H(g_2,t_2,f_2,e_2,n_2,D-D_1) \nonumber 
   \\
 & & \quad + \ \delta_{n \geq 3} \delta_{g \geq 1} 
         \sum_{p=1}^{n-2} p \, H(g-1,t-1,e,f,n-1-p,p.D) \label{rec:seq:hypermap}
         \\
 & & \quad + \sum_{p = n}^{p = t-1} H(g,t-1,e,f,p,D) \nonumber  \\
 & & \quad + \sum_{j=1}^{|D|} H(g,t-1,e,f,d_j+ n-1,D - \{d_j\}).  \nonumber 
\end{eqnarray}
\end{corollary}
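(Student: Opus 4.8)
The plan is to deduce the numerical recurrence directly from the bijective decomposition of Theorem~\ref{seq:hyper:th} by applying the cardinality function $|\cdot|$ to both sides, using that a bijective decomposition of sets into a disjoint union is taken by $|\cdot|$ into an ordinary sum, and that $|A \times B| = |A|\,|B|$. Concretely, I would set $H(g,t,f,e,n,D) := |\mathcal{H}(g,t,f,e,n,D)|$ and observe that the base case is immediate: the right-hand side of~(\ref{bij:seq:hypermap}) when $(g,t,f,e,n,D) = (0,0,1,0,0,[\,])$ reduces to the singleton $\Delta_{(g,t,f,e,n,D)=(0,0,1,0,0,[\,])}$ (all the index sets of the sums are empty since $t-1 = -1$), so $H(0,0,1,0,0,[\,]) = 1$.

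For the inductive part, assume $t \geq 1$. Then the term $\Delta_{(g,t,f,e,n,D)=(0,0,1,0,0,[\,])}$ is the empty set and contributes $0$, which is why it disappears from~(\ref{rec:seq:hypermap}). I would then take the cardinality of each of the four remaining summands in~(\ref{bij:seq:hypermap}) term by term. The first summand is a disjoint union over the index set $\{g_1+g_2=g,\ t_1+t_2=t-1,\ f_1+e_2=e,\ f_2+e_1=f,\ n_1+n_2=n-1,\ D_1\subseteq D\}$ of Cartesian products, so its cardinality is $\sum H(g_1,t_1,f_1,e_1,n_1,D_1)\,H(g_2,t_2,f_2,e_2,n_2,D-D_1)$ over the same index set, matching the first line of~(\ref{rec:seq:hypermap}). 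For the second summand, $|\mathcal{H}(g-1,t-1,e,f,n-1-p,p.D) \times \{1,\ldots,p\}| = p\,H(g-1,t-1,e,f,n-1-p,p.D)$; summing over $p$ from $1$ to $n-2$ gives the second line. Here the factors $\delta_{n\geq 3}$ and $\delta_{g\geq 1}$ require a word of justification: when $n \leq 2$ the range $p = 1,\ldots,n-2$ is empty, so the sum is zero, which is exactly what $\delta_{n\geq 3}$ records; and when $g = 0$ the sets $\mathcal{H}(g-1,\ldots) = \mathcal{H}(-1,\ldots)$ are empty by convention (genus cannot be negative), so each term is zero, matching $\delta_{g\geq 1}$. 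Thus inserting these two indicator factors changes nothing but makes explicit when the sum vanishes. The third and fourth summands translate directly: $\sum_{p=n}^{t-1} H(g,t-1,e,f,p,D)$ and $\sum_{j=1}^{|D|} H(g,t-1,e,f,d_j+n-1,D-\{d_j\})$.

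Since Theorem~\ref{seq:hyper:th} asserts that $\mathcal{H}(g,t,f,e,n,D)$ is the \emph{disjoint} union of these pieces, the cardinalities add with no overcounting, and collecting the four translated summands yields precisely~(\ref{rec:seq:hypermap}). This completes the argument. There is no real obstacle: the only point demanding care is the bookkeeping of when the various index ranges are empty versus when a summand is a product of empty sets, which is exactly what the $\delta_{n\geq 3}$ and $\delta_{g\geq 1}$ factors encode; everything else is the routine passage from a bijective set decomposition to a numerical identity.
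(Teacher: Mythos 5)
Your proposal is correct and follows exactly the route the paper intends: the corollary is obtained from the bijective decomposition of Theorem~\ref{seq:hyper:th} by taking cardinalities, with the factor $p$ coming from $|\{1,\ldots,p\}|$ and the indicators $\delta_{n\geq 3}$, $\delta_{g\geq 1}$ merely recording when the corresponding index set or genus-$(g-1)$ family is empty. The paper leaves this passage implicit, and your write-up supplies precisely the routine bookkeeping it omits.
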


\section{Multirooted hypermaps}
\label{multi:hypermap:sec}

For $\rho \geq 1$ a $\rho$-rooted hypermap is a hypermap in which a sequence of
$\rho$ darts with pairwise distinct initial vertices is distinguished. A
multirooted hypermap is a $\rho$-rooted hypermap for some $\rho \geq 1$. This section addresses
the enumeration of multirooted hypermaps.

\begin{thm}[Recurrence between numbers of multirooted hypermaps]
\label{hyp:multi:rec:number:cor}
Let $H_m(g,t,f,e,D)$ be the number of multirooted hypermaps of genus $g$ with
$t$ darts, $f$ faces and $e$ hyperedges such that $D$ is the list of degrees
of the distinguished vertices. Then $H_m(0,0,1,0,[\,]) = 1$ and if $t \geq
1$, then

\begin{eqnarray}
& & H_m(g,t,f,e,n.D) = \nonumber \\
 & & \quad \sum_{
    \begin{scriptsize}
    \begin{array}{c}
     g_1 + g_2 = g \\
     t_1 + t_2 = t - 1 \\
     f_1 + e_2 = e \\
     f_2 + e_1 = f \\
     n_1+n_2 = n - 1 \\
     D_1 \subseteq D 
    \end{array}
    \end{scriptsize}
   } 
   H_m(g_1,t_1,f_1,e_1,n_1.D_1) \, H_m(g_2,t_2,f_2,e_2,n_2.(D-D_1)) \nonumber 
   \\
 & & \quad + \ \delta_{n \geq 3} \delta_{g \geq 1} 
         \sum_{p=1}^{n-2} H_m(g-1,t-1,e,f,(n-1-p).p.D)
         \label{rec:multi:hypermap}
         \\
 & & \quad + \sum_{p = n}^{p = t-1} H_m(g,t-1,e,f,p.D) \nonumber  \\
 & & \quad + \sum_{j=1}^{|D|} d_j \, H_m(g,t-1,e,f,(d_j+ n-1).(D - \{d_j\})). 
 \nonumber
\end{eqnarray}
\end{thm}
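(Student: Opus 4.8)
The plan is to reduce the statement to the already-established recurrence for sequenced hypermaps (Corollary~\ref{hyp:sec:rec:number:cor}) by exhibiting a many-to-one correspondence between multirooted and sequenced hypermaps. First I would fix the combinatorial dictionary: a $\rho$-rooted hypermap carries a sequence of $\rho$ darts $x_1,\ldots,x_\rho$ lying in pairwise distinct vertices; a sequenced hypermap with the same underlying hypermap carries a root dart in the first of these vertices and labels $1,\ldots,\rho-1$ on the remaining $\rho-1$ vertices. Going from sequenced to multirooted, one recovers a $\rho$-rooted hypermap by choosing, in each distinguished vertex of degree $d_i$, one of its $d_i$ darts to be $x_{i+1}$, and taking $x_1$ to be the existing root. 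Hence, writing $D=(d_1,\ldots,d_{|D|})$ for the list of degrees of the non-root distinguished vertices and $n$ for the degree of the root vertex, one gets the identity
\begin{equation}
H_m(g,t,f,e,n.D) \;=\; \Bigl(\textstyle\prod_{i=1}^{|D|} d_i\Bigr)\, H(g,t,f,e,n,D),
\label{multi:vs:seq}
\end{equation}
with the convention that the empty product is $1$, so that the base case $H_m(0,0,1,0,[\,])=H(0,0,1,0,0,[\,])=1$ is immediate. (One must be slightly careful with the position of the root vertex: in $H_m$ the first entry $n$ of $n.D$ plays the role that $n$ plays separately in $H$, and there are no repeated-vertex collisions because the defining condition of a $\rho$-rooted hypermap forces the $\rho$ initial vertices to be distinct.)

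Next I would substitute \eqref{multi:vs:seq} into the recurrence \eqref{rec:seq:hypermap} of Corollary~\ref{hyp:sec:rec:number:cor} and check, term by term, that the factors $\prod d_i$ redistribute exactly as claimed. In the quadratic term, $D$ splits as $D_1$ and $D-D_1$, and the product $\prod_{d\in D}d$ factors as $(\prod_{d\in D_1}d)(\prod_{d\in D-D_1}d)$, matching the two $H_m$ factors with lists $n_1.D_1$ and $n_2.(D-D_1)$; no new factor appears, consistent with \eqref{rec:multi:hypermap}. In the genus-reducing term of \eqref{rec:seq:hypermap} one has the explicit weight $p$ multiplying $H(g-1,t-1,e,f,n-1-p,p.D)$; under \eqref{multi:vs:seq} this $H$ contributes a factor $p\cdot\prod_{d\in D}d$, so the overall weight on the corresponding $H_m$ with list $(n-1-p).p.D$ is exactly $\prod_{d\in D}d$ — i.e.\ the weight $p$ has been "absorbed" into the new distinguished vertex of degree $p$, which is why \eqref{rec:multi:hypermap} shows no explicit $p$ there. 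In the third term, $D$ is unchanged and $p$ is merely the new root degree (not a distinguished-vertex degree), so again only $\prod_{d\in D}d$ appears, in agreement. In the last term of \eqref{rec:seq:hypermap}, the vertex of degree $d_j$ is merged into the root, so $H(g,t-1,e,f,d_j+n-1,D-\{d_j\})$ carries the factor $\prod_{d\in D-\{d_j\}}d$; multiplying through by $d_j$ to restore the full product $\prod_{d\in D}d$ on the $H_m$ side produces precisely the explicit coefficient $d_j$ in \eqref{rec:multi:hypermap}. Dividing the whole identity by $\prod_{d\in D}d$ then yields \eqref{rec:multi:hypermap}.

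The main obstacle I anticipate is not the algebra but justifying \eqref{multi:vs:seq} cleanly, in particular arguing that the map from sequenced hypermaps (with a choice of one dart per distinguished vertex) to $\rho$-rooted hypermaps is a bijection onto its image and that the fibre over each $\rho$-rooted hypermap has size exactly $\prod d_i$. The subtle point is the interaction with isomorphisms: a hypermap has no nontrivial automorphisms once a root is fixed, which is what makes "choosing a dart in vertex $i$" give genuinely $d_i$ distinct sequenced-then-marked objects rather than fewer; I would state this rigidity explicitly (it follows from transitivity of $\langle R,L\rangle$ together with the presence of a distinguished root dart) and use it to rule out overcounting. Once \eqref{multi:vs:seq} is in hand, the remainder is the purely formal substitution sketched above, and the case $t\ge 1$ of \eqref{rec:multi:hypermap} follows directly from the case $t\ge 1$ of \eqref{rec:seq:hypermap}.
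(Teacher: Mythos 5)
Your proposal is correct and follows essentially the same route as the paper: establish the identity $H_m(g,t,f,e,n.D) = H(g,t,f,e,n,D)\,\prod_{j=1}^{|D|} d_j$ by observing that each distinguished non-root vertex of degree $d_j$ admits $d_j$ choices of distinguished dart, then substitute into the sequenced-hypermap recurrence of Corollary~\ref{hyp:sec:rec:number:cor}. Your term-by-term verification of how the product of degrees redistributes (absorbing the weight $p$ in the genus-reducing term and producing the explicit coefficient $d_j$ in the contraction term) is exactly the computation the paper leaves implicit.
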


\begin{proof}
A multirooted hypermap is similar to a sequenced rooted hypermap except that for
each distinguished non-root vertex a dart starting from it is distinguished. If
the degree of the $j$th distinguished vertex is $d_j$, then there are $d_j$ ways of
distinguishing a dart of this vertex. It follows that for each
sequenced rooted hypermap, there are $\Pi_{j = 1}^{|D|} d_j$ multirooted
hypermaps. Let $H_m(g,t,f,e,D)$ be the number of multirooted hypermaps of
genus $g$ with $t$ darts, $f$ faces and $e$ hyperedges such that such that
$D$ is the list of degrees of the initial vertex of the distinguished
darts. Then

\begin{equation}
H_m(g,t,f,e,n.D) = H(g,t,f,e,n,D) \, \Pi_{j = 1}^{|D|} d_j.
\label{seq:multi:hyp:rel}
\end{equation} 

Solving~(\ref{seq:multi:hyp:rel}) for $H(g,t,f,e,n,D)$ and substituting
into~(\ref{rec:seq:hypermap}) proves the theorem.
\end{proof}

For $\rho \geq 1$ let
\begin{equation}
\textit{H}_{{g}}(v_{1},\ldots,v_{\rho},x,y,u,z) = \sum_{
 \begin{tiny}
  	\begin{array}{c}
   t \geq 0, f \geq 1, e \geq 0 \\ 
   d_1 \geq 1, \ldots, d_{\rho} \geq 1\\
   v = t + 2(1 – g) - e - f
  \end{array}
 \end{tiny}
 } \
 H_{m}(g,t,f,e,D_{1..\rho}) v_{1..\rho}^{D_{1..\rho}} 
 x^{f} y^{e} u^{v} z^{t}
\label{hyp:multi:series:def}
\end{equation}
be the generating function that counts multirooted hypermaps of genus $g$ with
$\rho$ distinguished darts if $g \geq 0$, and 0 otherwise. For $1 \leq i \leq
\rho$, the exponent $d_i$ of the variable $v_i$ in this series is the degree of
the initial vertex of the $i$-th  distinguished dart. The exponent $f$ of the
variable $x$ is the number of faces, the exponent $e$ of the variable
$y$ is the number of hyperedges, the exponent $t$ of the variable $z$ is the
number of darts and the exponent $v$ of the variable $u$ is the number of
vertices ($v$ is computable from the other
parameters by Formula~(\ref{genus:formula:hyper})).

\begin{corollary}[Functional equations for multirooted hypermaps]
\label{hyp:multi:eq:cor}
For $g \geq 0$ and $\rho \geq 1$ the generating functions $H_{{g}}$ of
multirooted hypermaps of genus $g$ are defined by the following functional
equations:
\begin{eqnarray}
 & & H_{{g}}(v_{1},W,x,y,u,z) = \nonumber \\
& & \quad 
 \frac{y v_{1} z}{x u}
   \sum _{j=0}^{g}
     \sum _{X \subseteq W}
       H_{{j}}(v_{1},X,y,x,u,z) H_{{g-j}}(v_{1},W-X,x,y,u,z)  \nonumber 
\\
& & \quad + \ {\frac
      {v_{1} z}
           {u}} 
    H_{{g-1}}(v_{1},v_{1},W,y,x,u,z)        \label{H:functional:eq} \\
& & \quad + \ {\frac 
       {v_{1} u z}
      {v_{1}-1}}
    \left(H_{{g}}(v_{1},W,y,x,u,z)- H_{{g}}(1,W,y,x,u,z) \right)
    \nonumber
\\
& & \quad +\  v_{1} u z \nonumber \\
& & \quad\phantom{+} \ \  
     \sum _{j=2}^{j={\rho}}v_{j}
        {\frac {\partial }{\partial v_{j}}}
        \left( v_{j}
          {\frac 
  {H_{{g}}(v_{j},W-\{v_{j}\},y,x,u,z)- H_{{g}}(v_{1},W-\{v_{j}\},y,x,u,z)}
                       {v_{j}-v_{1}}}
    \right) \nonumber
\\
&  & \quad + \ x u \delta_{g=0}\delta_{\rho=1}, \nonumber
\end{eqnarray}
where $W=v_2,\ldots,v_{\rho}$.
\end{corollary}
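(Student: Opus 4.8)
The plan is to derive each line of the functional equation~(\ref{H:functional:eq}) by translating the corresponding line of the recurrence~(\ref{rec:multi:hypermap}) into the language of generating functions, using the definition~(\ref{hyp:multi:series:def}). First I would fix $g \geq 0$ and $\rho \geq 1$ and multiply both sides of~(\ref{rec:multi:hypermap}) by $v_{1..\rho}^{D_{1..\rho}} x^f y^e u^v z^t$ with $D = D_{1..\rho}$ written as $n.D$ so $d_1 = n$, then sum over all admissible $t \geq 1$, $f \geq 1$, $e \geq 0$ and $d_1,\dots,d_\rho \geq 1$. The left side becomes $H_g(v_1,W,x,y,u,z)$ minus the $t=0$ contribution, which is exactly the term $xu\,\delta_{g=0}\delta_{\rho=1}$ coming from $H_m(0,0,1,0,[\,])=1$ (with $v = 1$, $f = 1$); moving that term to the right-hand side accounts for the last line. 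The key bookkeeping fact I will use repeatedly is Formula~(\ref{genus:formula:hyper}): decreasing $t$ by $1$ while keeping $e$, $f$ fixed and $g$ fixed decreases $v$ by $1$, and swapping the roles of $e$ and $f$ leaves $v$ unchanged; this explains every $z$, $u$ and $x/y$ factor below.

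Next I would handle the four sum-terms one at a time. For the quadratic term, a pair of multirooted hypermaps with parameters $(g_i,t_i,f_i,e_i,n_i.D_i)$ satisfying $g_1+g_2=g$, $t_1+t_2=t-1$, $f_1+e_2=e$, $f_2+e_1=f$, $n_1+n_2=n-1$, $D_1 \sqcup D_2 = D$: the $D_i$-splitting becomes $\sum_{X \subseteq W} \cdots$ on the shared variables $W$, the $n_i$ split with $n_1+n_2 = n-1$ produces one extra factor of $v_1$ (hence the $v_1$ in the prefactor) and the $-1$ edge produces $z$; the conditions $f_1+e_2=e$ and $f_2+e_1=f$ mean that in the first factor the roles of $x$ (faces) and $y$ (hyperedges) are interchanged, which is why the first factor is $H_j(v_1,X,y,x,u,z)$ while the second is $H_{g-j}(v_1,W-X,x,y,u,z)$; finally $v_1 = n_1+n_2+1$ overcounts $v$ by... actually the vertex count: $v = v_1 + v_2 + 1$ (the split creates one new vertex), so dividing by $u$ corrects the exponent, and the extra $y$ appears because... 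I will verify that the net monomial correction is precisely $yv_1z/(xu)$, which is the announced prefactor; this is a careful but mechanical check. For the genus-reducing term, $\delta_{n\geq 3}\delta_{g\geq 1}\sum_{p=1}^{n-2} H_m(g-1,t-1,e,f,(n-1-p).p.D)$ introduces a new distinguished vertex, hence a new variable which I must identify with $v_1$ in the $\rho{+}1$-rooted series and then the substitution that produces $H_{g-1}(v_1,v_1,W,\dots)$; the sum over $p$ with the two parts $n-1-p$ and $p$ summing to $n-1$ is exactly the Cauchy-product/coefficient-extraction identity that turns $\sum_p x^{p}x^{n-1-p}$ into setting both new variables equal to $v_1$, and again $e$ and $f$ swap while one factor of $z$ comes from $t \to t-1$ and one factor of $v_1$ and one of $1/u$ from the vertex bookkeeping.

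For the third term, $\sum_{p=n}^{t-1} H_m(g,t-1,e,f,p.D)$ is the substitution that replaces the root-vertex degree $n$ by a larger degree $p$: multiplying by $v_1^n$ and summing over $n$ from $1$, the inner sum over $p \geq n$ reorganizes (interchanging the order of summation over $n$ and $p$) into $\sum_p \bigl(\sum_{n=1}^{p} v_1^n\bigr) H_m(\dots,p.D) = \frac{v_1}{v_1-1}\sum_p (v_1^p - 1) H_m(\dots,p.D)$, which is exactly $\frac{v_1}{v_1-1}\bigl(H_g(v_1,W,y,x,u,z) - H_g(1,W,y,x,u,z)\bigr)$ up to the $uz$ factor ($z$ from $t\to t-1$, $u$ because $v$ stays the same while $t$ drops, so... again I'll track this precisely), and the $x \leftrightarrow y$ swap comes from the edge/face swap. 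For the last term, $\sum_{j=1}^{|D|} d_j\, H_m(g,t-1,e,f,(d_j+n-1).(D-\{d_j\}))$, the index $j$ ranges over the $\rho-1$ non-root distinguished vertices (so $j$ from $2$ to $\rho$ after re-indexing), $d_j$ becomes the operator $v_j \partial/\partial v_j$ acting on the variable being removed, and merging the $j$-th vertex into the root (degrees adding as $d_j + n - 1$) is the standard divided-difference construction $\bigl(v_j H_g(v_j,\dots) - v_1 H_g(v_1,\dots)\bigr)/(v_j - v_1)$ — here I need the identity $\sum_{m\geq 1}\sum_{k\geq 1}(\text{coeff})\,v_1^{m}v_j^{k}\mapsto$ (divided difference at the combined degree $m+k-1$), evaluated so that the $v_1^{d_j+n-1}$ monomial is produced. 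The main obstacle I expect is precisely this last line: getting the divided-difference operator with the extra $v_j\partial/\partial v_j$ and the inner $v_j(\cdot)$ exactly right requires careful attention to which generating function carries which degree variable and to the off-by-one in $d_j + n - 1$; the first three terms are routine re-summations once the $v$-exponent bookkeeping from~(\ref{genus:formula:hyper}) is set up, but the symmetrization in the $v_j$ variables needs the observation that $H_g$ is symmetric in $v_2,\dots,v_\rho$ and that the apparent pole at $v_j = v_1$ is removable. I would close by remarking that summing the five displayed right-hand-side contributions reproduces~(\ref{H:functional:eq}) term by term, and that each step is an equivalence of formal power series, so the functional equation characterizes $H_g$.
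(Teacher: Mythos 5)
Your overall strategy is exactly the paper's: the published proof of this corollary is the single sentence ``by summation according to (\ref{hyp:multi:series:def}) of the recurrence of Theorem~\ref{hyp:multi:rec:number:cor}'', and your proposal is a term-by-term execution of that summation. Your handling of the second, third and fourth terms is sound: the dart/vertex bookkeeping via Formula~(\ref{genus:formula:hyper}) produces the factors $z$ and $u^{\pm 1}$ and the $x\leftrightarrow y$ swap as you describe, interchanging the $n$ and $p$ summations gives the $\frac{v_1}{v_1-1}\left(H_g(v_1,\dots)-H_g(1,\dots)\right)$ difference, and the weight $d_j$ together with the merged degree $d_j+n-1$ is correctly encoded by $v_j\frac{\partial}{\partial v_j}$ applied to the divided difference, with one residual factor $v_1$ absorbed into the prefactor.

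The gap is at the one point where you explicitly defer the verification: the prefactor of the quadratic term. Carrying out the computation you yourself set up --- constraints $f_1+e_2=e$ and $f_2+e_1=f$, so that $x^f y^e=(x^{e_1}y^{f_1})(x^{f_2}y^{e_2})$ and the swap is wholly absorbed into the first factor $H_j(v_1,X,y,x,u,z)$; $t_1+t_2=t-1$ giving one factor $z$; $n_1+n_2=n-1$ giving one factor $v_1$; and $v_1'+v_2'=v+1$ (one new vertex) giving one factor $u^{-1}$ --- yields the prefactor $\frac{v_1 z}{u}$, with no residual $y/x$. So the ``careful but mechanical check'' you postpone does not return the announced $\frac{y v_1 z}{x u}$. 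You can confirm the discrepancy at low order: with the printed prefactor, the $g=0$, $\rho=1$ specialization (\ref{H:0:1:eq}) produces $y^2uv$ at the one-dart order where $xyuv$ is required, whereas the prefactor $\frac{v}{u}$ reproduces the correct coefficients and is also the only choice consistent with the auxiliary function (\ref{A:def:eq}) used in the quadratic method later in the paper. Consequently, as written your argument cannot be completed into a proof of the displayed equation: you must either actually perform the deferred check (and then reconcile, or flag, the extra $y/x$ in the statement) or you are left asserting an identity that your own bookkeeping contradicts.
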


\begin{proof}
By summation according to~(\ref{hyp:multi:series:def}) of the recurrence between
numbers of multirooted hypermaps from Theorem~\ref{hyp:multi:rec:number:cor}.
\end{proof}

By vertex-hyperedge duality, we have
\begin{equation}
H_{g}(v_{1},W,y,x,u,z) = H_{g}(v_{1},W,x,y,u,z)
+\delta_{g=0}\delta_{\rho=1}(yu - xu)
\label{vertex:edge:dual:multi:eq}
\end{equation}
 and thus another functional equation without $x,y$ swaps is:

\begin{eqnarray}
& & H_{{g}}(v_{1},W,x,y,u,z) = \nonumber
\\
& & \quad 
 \frac{y v_{1} z}{x u}
   \sum _{j=0}^{g}
     \sum _{X \subseteq W}
       \left(
       \begin{array}{l}
       \left(H_{j}(v_{1},X,x,y,u,z)+\delta_{j=0}\delta_{|X|=0}(yu- xu)
       \right)\\
 H_{{g-j}}(v_{1},W-X,x,y,u,z)
       \end{array}
       \right)  \nonumber 
\\
& & \quad + \ {\frac
      {v_{1} z}
           {u}} 
    H_{{g-1}}(v_{1},v_{1},W,x,y,u,z)        \label{H:functional:dual:z:eq} \\
& & \quad +\ {\frac 
       {v_{1} u z}
      {v_{1}-1}}
    \left( H_{{g}}(v_{1},W,x,y,u,z)-H_{{g}}(1,W,x,y,u,z) \right) \nonumber
\\
& & \quad + \ v_{1} u z \nonumber \\
& & \quad\phantom{+} \ \sum _{j=2}^{j={\rho}}v_{j}
        {\frac {\partial }{\partial v_{j}}}
        \left( v_{j}
          {\frac 
  {H_{{g}}(v_{j},W-\{v_{j}\},x,y,u,z)-H_{{g}}(v_{1},W-\{v_{j}\},x,y,u,z)}
                       {v_{j}-v_{1}}}
    \right) \nonumber
\\
&  & \quad + x u \delta_{g=0}\delta_{\rho=1}. \nonumber
\end{eqnarray}

The former equation is given here for maximal generality. However, a consequence
of the genus formula (\ref{genus:formula:hyper}) is that three variables among
the four variables $x$, $y$, $u$ and $z$ are sufficient. In the remainder of the
paper we consider the generating functions 
$$H_{{g}}(v_{1},W,x,y,u) =
H_{{g}}(v_{1},W,x,y,u,1)$$ with one fewer variable. They are defined by the
following functional equations:

\begin{eqnarray}
& & H_{{g}}(v_{1},W,x,y,u) = \nonumber
\\
& & \quad 
 \frac{y v_{1}}{x u}
   \sum _{j=0}^{g}
     \sum _{X \subseteq W}
       \left(H_{j}(v_{1},X,x,y,u)+\delta_{j,0}\delta_{|X|,0}(yu- xu)
       \right)
 H_{{g-j}}(v_{1},W-X,x,y,u)  \nonumber 
\\
& & \quad + \ {\frac
      {v_{1}}
           {u}} 
    H_{{g-1}}(v_{1},v_{1},W,x,y,u)        \label{H:functional:dual:eq} \\
& & \quad + \ {\frac 
       {v_{1} u}
      {v_{1}-1}}
    \left( H_{{g}}(v_{1},W,x,y,u)-H_{{g}}(1,W,x,y,u) \right) \nonumber
\\
& & \quad + \  v_{1} u 
     \sum _{j=2}^{j={\rho}}v_{j}
        {\frac {\partial }{\partial v_{j}}}
        \left( v_{j}
          {\frac 
  {H_{{g}}(v_{j},W-\{v_{j}\},x,y,u)-H_{{g}}(v_{1},W-\{v_{j}\},x,y,u)}
                       {v_{j}-v_{1}}}
    \right) \nonumber
\\
&  & \quad +\ x u \delta_{g=0}\delta_{\rho=1}. \nonumber
\end{eqnarray}

For $g,{\rho} \neq 0,1$, after grouping in the left-hand side the terms
containing $H_{g}(v_{1},W,x,y,u)$ in (\ref{H:functional:dual:eq}), one gets
\begin{eqnarray}
& & \frac{A(v_{1},x,y,u)}{v_1} H_{g}(v_{1},W,x,y,u) 
  = \nonumber \\
& & \quad 
 x (1-v_{1}) 
   \sum _{j=0}^{g}
     \sum _{
      \begin{tiny}
      \begin{array}{c}
       X \subseteq W\\
       (j,X) \neq (0,[\,])\\
       (j,X) \neq (g,W)
      \end{array}
      \end{tiny}
     }
        H_{j}(v_{1},X,x,y,u)
 H_{{g-j}}(v_{1},W-X,x,y,u)  
\nonumber \\
& & \quad + \ {\frac
      {1-v_{1}}
           {u}} 
    H_{{g-1}}(v_{1},v_{1},W,x,y,u)       
+  u H_{{g}}(1,W,x,y,u) 
\nonumber \\
& & \quad
 + \ u T_{g}(v_{1},W,x,y,u)  \label{H:recurrence:eq}
\end{eqnarray}
with
\begin{equation}
A(v,x,y,u) = 
v u+(1-v)(1-y v+x v-2v H_0(v,x,y,u)/u)
\label{A:def:eq}
\end{equation}
and
\begin{eqnarray}
& & T_{g}(v_{1},W,x,y,u) =  \nonumber \\
& & \quad   
 (1-v_1) \sum_{j=2}^{j={\rho}} v_{j}
  \frac {\partial }{\partial v_{j}}
   \left( \frac{v_{j}}{v_{j}-v_{1}}
    \left(
     \begin{array}{l}
     H_{{g}}(v_{j},W-\{v_{j}\},x,y,u)\\
     -H_{{g}}(v_{1},W-\{v_{j}\},x,y,u)
     \end{array}
\right)
    \right).
    \label{T:eq}
\end{eqnarray}

\section{Rooted hypermap generating functions}
\label{rooted:hyper:series:sec}

Let $h_g(v,e,f)$ be the number of rooted genus-$g$ hypermaps with 
$v$ vertices, $e$ hyperedges and $f$ faces. Let 
\begin{equation}
H_g(x,y,u) = \sum_{v,e,f \geq 1} h_g(v,e,f) x^v y^e u^f
\label{xuHg:eq}
\end{equation}
be the ordinary generating function for counting rooted hypermaps on the
orientable surface of genus $g \geq 0$, where the exponent of variable $x$ is the
number of vertices, the exponent of variable $y$ is the number of hyperedges, and
the exponent of variable $u$ is the number of faces. 

Rooted hypermaps being $1$-rooted hypermaps,
\begin{equation}
H_g(x,y,u) = H_g(1,x,y,u),
\end{equation}
where $H_g(v_{1},\ldots,v_{\rho},x,y,u)$ is the generating function counting
$\rho$-rooted genus-$g$ hypermaps defined in
Section~\ref{multi:hypermap:sec} for $\rho \geq 1$. 

We first recall in Section~\ref{basic:case:proof:sec} a known parametric
expression of the generating function 
that counts rooted planar hypermaps. Then we explain in
Section~\ref{pattern:positive:genus:sec} how to solve the functional equation of
the generating functions $H_g(x,y,u)$ that count rooted hypermaps with a given
positive genus $g$.

\subsection{Rooted planar hypermaps}
\label{basic:case:proof:sec}

The following proposition is a reformulation of \cite[Theorem~3]{Arq86a}, with
the correspondence $s = x$, $f = u$ and $a = y$ for variables, $\lambda =
p$, $\mu = q$ and $\nu = r$ for parameters, and $H_0 = sf(1+J)$ for generating
functions.

\begin{proposition}{\cite{Arq86a}}
\label{rooted:hyp:param:planar:prop}
The ordinary generating function $H_{0}(x,y,u)$ that counts rooted planar
hypermaps by number of vertices (exponent of $x$), hyperedges (exponent of $y$)
and faces (exponent of $u$) is the unique solution of the following parametric
system:
\begin{eqnarray}
H_0(x,y,u) = 1+p q r (1-p-q-r) \label{H0:param:eq}
\end{eqnarray}
with 
\begin{equation}
\left\{
\begin{array}{l}
x  =  p (1-q-r) \\
u  =  q (1-p-r) \\
y  =  r (1-p-q).
\end{array}
\right.   \label{xuy:param:eq}
\end{equation}
\end{proposition}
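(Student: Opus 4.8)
The plan is to derive the parametric system of Proposition~\ref{rooted:hyp:param:planar:prop} directly from the functional equation~(\ref{H:functional:dual:eq}) specialized to $g = 0$ and $\rho = 1$, so that the statement becomes a self-contained consequence of Corollary~\ref{hyp:multi:eq:cor} rather than merely a citation of~\cite{Arq86a}. In this specialization $W$ is empty, the only sum over subsets $X \subseteq W$ collapses to $X = [\,]$, the derivative term $T_0$ vanishes, and~(\ref{H:functional:dual:eq}) reduces to a single equation in the one unknown $H_0(v_1,x,y,u)$ together with its value $H_0(1,x,y,u) = H_0(x,y,u)$ at $v_1 = 1$ (this last being the quantity of interest, by the identification $H_g(x,y,u) = H_g(1,x,y,u)$). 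The first step is to write out that reduced equation explicitly, clear denominators, and recognise it as a polynomial relation of the form $A(v_1,x,y,u)\,H_0(v_1,x,y,u) = (\text{explicit terms in } v_1,x,y,u,H_0(x,y,u))$, where $A$ is the quantity defined in~(\ref{A:def:eq}).

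Next I would introduce the catalytic-variable / kernel method. The variable $v_1$ is catalytic: $H_0(v_1,x,y,u)$ is a power series in $x,y,u$ whose coefficients are polynomials in $v_1$, and the equation determines it. The kernel is $A(v_1,x,y,u)$; setting $A = 0$ defines $v_1$ as an algebraic function $v_1 = V(x,y,u)$, a power series with $V = 1 + O(x,y,u)$, and substituting $v_1 = V$ into the cleared equation annihilates the left-hand side, yielding one scalar equation relating $H_0(x,y,u)$, $V$, and $x,y,u$. To get a full parametrisation one typically needs more than one such relation; the standard device here is to observe that the problem is symmetric under the $S_3$-action permuting vertices, hyperedges and faces (equivalently, permuting $x$, $u$, $y$), so one expects three parameters $p, q, r$ — one attached to each of the three "colours" — playing symmetric roles, with $v_1 = V$ expressible in terms of them. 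Concretely I would posit the ansatz~(\ref{xuy:param:eq}), namely $x = p(1-q-r)$, $u = q(1-p-r)$, $y = r(1-p-q)$, together with $H_0 = 1 + pqr(1-p-q-r)$, and then \emph{verify} that this triple, substituted into the kernel equation $A(v_1,x,y,u) = 0$ and into the relation obtained from the kernel method, is consistent — identifying along the way which combination of $p,q,r$ equals the catalytic root $V$ (I expect something like $v_1 = 1/(1-q-r)$ or a similar rational expression forced by $A = 0$).

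The remaining work is bookkeeping: check that~(\ref{xuy:param:eq}) is invertible as a formal power series near $p = q = r = 0$ (the Jacobian at the origin is the identity, since $x = p + O(\deg 2)$ etc.), so that $p, q, r$ are well-defined power series in $x, y, u$ with no constant term; check that the resulting $H_0 = 1 + pqr(1-p-q-r)$ has the right initial term ($H_0 = 1$ corresponds to the empty hypermap, matching $\Delta_{(g,t,f,e,n,D)=(0,0,1,0,0,[\,])}$ in the decomposition, and the first genuine term $pqr \sim xyu$ counts the single hypermap with one dart); and finally invoke uniqueness — the functional equation~(\ref{H:functional:dual:eq}) for $g = 0, \rho = 1$ has a unique power-series solution, so any solution of the parametric system that satisfies it \emph{is} $H_0$. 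Since the correspondence with~\cite[Theorem~3]{Arq86a} is already spelled out before the proposition statement ($s = x$, $f = u$, $a = y$, $\lambda = p$, $\mu = q$, $\nu = r$, $H_0 = sf(1+J)$), an alternative and shorter route is simply to quote that theorem and translate; I would present the direct derivation as the main argument and remark that it recovers Arquès' result under this dictionary.

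The main obstacle will be the second step — pinning down \emph{which} auxiliary relation(s) to combine with the single kernel equation $A = 0$ to force the three-parameter form. One scalar equation in the unknowns $V$ and $H_0$ is not by itself enough to produce a symmetric three-variable parametrisation; the cleanest resolution is to exploit the full vertex–hyperedge–face symmetry (visible already in the genus formula~(\ref{genus:formula:hyper}) and in the $x \leftrightarrow y$ duality~(\ref{vertex:edge:dual:multi:eq})) to argue that the solution must be symmetric in the three colours, which both motivates the ansatz and reduces the verification to a finite algebraic identity in $p, q, r$ that can be checked by direct substitution. If one is uncomfortable invoking symmetry as a derivation tool, the honest fallback is exactly the citation: verify that Arquès' parametrisation, transported by the stated dictionary, satisfies~(\ref{H:functional:dual:eq}) at $g = 0, \rho = 1$, and conclude by uniqueness.
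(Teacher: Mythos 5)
Your starting point and general strategy match the paper's: specialize~(\ref{H:functional:dual:eq}) to $g=0$, $\rho=1$ to obtain the equation~(\ref{H:0:1:eq}) in the catalytic variable $v$, and exploit the vanishing of $A(v,x,y,u)$ at a suitable series $v=V(x,y,u)$. But the step you yourself flag as ``the main obstacle'' is exactly where your plan breaks down, and the paper's resolution is not the one you propose. Because~(\ref{H:0:1:eq}) is \emph{quadratic} in the unknown bivariate series $H_0(v,x,y,u)$, the paper applies the quadratic method: it completes the square, setting $B(v,x,y,u)=A(v,x,y,u)^2$, where $B$ (formula~(\ref{B:def:eq})) is a polynomial in $v$ whose coefficients involve only $x,y,u$ and the single unknown univariate series $H_0(1,x,y,u)$ --- the troublesome $H_0(v,x,y,u)$ has been eliminated. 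The condition $A(V)=0$ then forces $V$ to be a \emph{double} root of $B$, yielding the two equations $B(V,x,y,u)=0$ and $\partial_v B(v,x,y,u)_{|v=V}=0$ (equations~(\ref{V:def:eq:1}) and~(\ref{V:def:eq:2})) in the two unknowns $V$ and $H_0(1,x,y,u)$. This is precisely the ``auxiliary relation'' you were looking for, and it comes for free from the identity $B=A^2$, not from any symmetry consideration. The proof then concludes by checking that $V=1/(1-q)$ together with~(\ref{xuy:param:eq}) and~(\ref{H0:param:eq}) satisfies both equations.

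By contrast, neither of the substitutes you offer closes the gap. The $S_3$ symmetry in $(p,q,r)$ constrains the \emph{form} of the solution but cannot determine it: verifying that an ansatz satisfies one scalar relation in the two unknowns $V$ and $H_0(1,x,y,u)$ proves nothing, since that relation admits a continuum of solutions. And the fallback of ``verify that Arqu\`es' parametrisation satisfies~(\ref{H:functional:dual:eq}) at $g=0$, $\rho=1$ and invoke uniqueness'' is not executable as stated, because that functional equation involves the full bivariate series $H_0(v,x,y,u)$, for which the proposition supplies no parametric expression; you would first have to solve for it, which brings you back to the quadratic method.
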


\begin{proof}
The generating function $H_{0}(v,x,y,u)$ that counts rooted planar hypermaps
(genus $0$) by number of vertices (exponent of $x$), hyperedges (exponent of
$y$), faces (exponent of $u$) and degree of the root vertex (exponent of $v$)
satisfies the functional equation
\begin{eqnarray}
H_{0}(v,x,y,u) & = & \frac{y v}{xu} \left(H_{0}(v,x,y,u) + yu - xu\right)
H_{0}(v,x,y,u) \nonumber \\ & &  
+\frac{v u}{v-1}
    \left( H_{0}(v,x,y,u)-H_{0}(1,x,y,u) \right) + xu \label{H:0:1:eq}
\end{eqnarray}
obtained by instantiation of (\ref{H:functional:dual:eq}) with $g=0$,
${\rho}=1$ and $v_1 = v$.

This equation can be solved by the \emph{quadratic method}~\cite[page
515]{FS09}. The idea is to define auxiliary functions $A(v,x,y,u)$ and
$B(v,x,y,u)$ by (\ref{A:def:eq})
and
\begin{eqnarray}
B(v,x,y,u) & = & A(v,x,y,u)^2  \label{AsqrB:eq}
\end{eqnarray}
and look for a function $V(x,y,u)$ such that 
\begin{equation}
A(V(x,y,u),x,y,u) = 0, \label{AV0:eq}
\end{equation}
 implying that $B(V(x,y,u),x,y,u) = 0$ and
$\partial_v B(v,x,y,u)_{|v = V(x,y,u)} = 0$.

We get from (\ref{H:0:1:eq}), (\ref{A:def:eq}) and (\ref{AsqrB:eq}) that
\begin{eqnarray}
 & & B(v,x,y,u) = \nonumber \\
 & & \quad 1-2 y v-2 x v-2 v^3 y-2 v^3 x-2 v^2 u+v^4 y^2-2
 v^3 y^2 +y^2 v^2+v^4 x^2
\nonumber \\
& & \quad -2 v^3 x^2+x^2 v^2+v^2 u^2+4 v^3 y x -2 y v^2 x-2 y v^2 u+2 v^3 y u-2
v^4 y x 
\nonumber \\
& & \quad -2 v^3 x u+2 x v^2 u +4 v^2 x+4 v^2 y +2 v u+4 v^3 H_0(1,x,y,u)
\nonumber \\
& & \quad -4 v^2 H_0(1,x,y,u)-2 v+v^2. \label{B:def:eq}
\end{eqnarray}
The constraints  $B(V(x,y,u),x,y,u) = 0$ and
$\partial_v B(v,x,y,u)_{|v = V(x,y,u)} = 0$ respectively are
\begin{eqnarray}
1-2 y V-2 x V-2 V^3 y-2 V^3 x-2 V^2 u+V^4 y^2-2 V^3 y^2 +y^2 V^2
\nonumber \\
 +\ V^4 x^2 -2 V^3 x^2+x^2 V^2+V^2 u^2+4 V^3 y x -2 y V^2 x-2 y V^2 u
\nonumber \\
+\ 2 V^3 y u x -2 V^4 y -2 V^3 x u+2 x V^2 u +4 V^2 x+4 V^2 y +2 V u \nonumber \\
+\ 4 V^3 
H_0(1,x,y,u) -4 V^2  H_0(1,x,y,u)-2 V+V^2 & = & 0  \label{V:def:eq:1}
\end{eqnarray}
and
\begin{eqnarray}
-2+8 y V+8 x V+4 V^3 y^2-6 y^2 V^2+4 V^3 x^2-6 x^2 V^2-6 V^2 x
\nonumber \\
-6 V^2 y-4 V u + 2 y^2 V+2 x^2 V+2 V u^2-4 y V u+4 x V u-4 y V x
 \nonumber \\
+\ 12 y V^2 x+6 y V^2 u -8 V^3 y x -6 x V^2 u +2 V-2 x-2 y+2 u & = & 0.
\label{V:def:eq:2}
\end{eqnarray}

It can be checked that both equations are satisfied by 
\begin{equation}
V = 1/(1-q) \label{V:def:eq}
\end{equation} with $x$, $u$, $y$ and $H_0(1,x,y,u)$ related to $p$, $q$ and $r$
by (\ref{xuy:param:eq}) and
(\ref{H0:param:eq}).
\end{proof}

\subsection{Rooted hypermaps with positive genus}
\label{pattern:positive:genus:sec}

The following additional notations are used in this section. Let $\rho$ be a
positive integer. Let $H_j[n_1,\ldots,n_{\rho}]$ denote the partial derivative
of the function $H_j(v_1,\ldots,v_{\rho},x,y,u)$ with respect to the variables
$v_1$, \ldots, $v_{\rho}$ to the respective orders $n_1$, \ldots, $n_{\rho}$,
computed at $v_1 = \ldots = v_{\rho} = V$.
The abbreviation $[\rho]$ denotes the list $[2, \ldots, \rho]$ if $\rho \geq 2$
and the empty list $[\,]$ if $\rho=1$. The abbreviation $N_{[\rho]}$ denotes the
list $[n_2, \ldots, n_{\rho}]$. For any sublist $X \subseteq [\rho]$ of
$[\rho]$, $[\rho]-X$ denotes the sublist of the elements of $[\rho]$ that are
not in $X$, $N_X$ denotes the list of those $n_i$ in $N_{[\rho]}$ such that $i$
is in $X$ and $N_j$ denotes the list $[n_2, \ldots, n_{j-1}, n_{j+1},
\ldots,n_{\rho}]$.

\subsubsection{Equation for rooted hypermaps and recurrence relations}
\label{proof:recur:rel:sec}

The special case of Formula (\ref{H:recurrence:eq}) for $g \geq 1$, $\rho = 1$
 and $v_1 = V$ is the following formula:

\begin{eqnarray*}
 & & u H_g(1,x,y,u) = \\
 & & \quad 
 (V-1) \left(x \sum_{j = 1}^{g-1} H_j(V,x,y,u) H_{g-j}(V,x,y,u)
 +H_{g-1}(V,V,x,y,u)/u\right)
\label{Hg:V:eq}
\end{eqnarray*}

i.e.

\begin{eqnarray}
u H_g(1,x,y,u) & = & (V-1) \left(x \sum_{j = 1}^{g-1}
H_j[0] H_{g-j}[0] +H_{g-1}[0,0]/u\right).
\label{Hg:V:0:eq}
\end{eqnarray}

In order to derive from (\ref{Hg:V:0:eq}) a value for $H_g(1,x,y,u)$, we are
looking for a value for 
$H_j[0]$, $H_{g-j}[0]$ and $H_{g-1}[0,0]$. More generally, we will
derive from
the following proposition a closed form for the expressions
$H_g[n_1,\ldots,n_{\rho}]$.

\begin{proposition}
\label{Hg:function:prop}
For $g \geq 0$, $\rho \geq 1$ and $n_1,\ldots,n_{\rho} \geq 0$ the function
$H_g[n_1,\ldots,n_{\rho}]$ is defined by
\begin{eqnarray}
& & 
\frac{(n_1+1) A[1]}{V} H_g[n_1,N_{[\rho]}]  = \nonumber \\ 
& & \quad
 \sum_{
  \begin{subarray}{c}
   i+j+k=n_1+1 \\
   i > 0,\, k < n_1 
  \end{subarray}
 }
 {n_1+1 \choose i,j} 
 \frac{\left (-1\right )^{j+1} j!}{V^{j+1}}
 A[i]
 H_{g}[k,N_{[\rho]}] \nonumber
\\ & &
\quad + \ x 
 \sum_{
  \begin{subarray}{c} 
   k+l+m=n_1+1 \\
   0 \leq j \leq g \\
   X \subseteq [{\rho}] \\
   (j,X) \neq (0,[\,]) \\
   (j,X) \neq (g,[{\rho}]) 
  \end{subarray}
 }
 {n_1+1 \choose k,l} M[m] H_{j}[k,N_{X}] H_{g-j}[l,N_{[\rho]-X}]
\nonumber
\\ 
& &
\quad
 + \frac{1}{u}
 \sum_{i+j+k=n_1+1}
  {n_1+1 \choose i,j} M[k] H_{g-1}[i,j,N_{[\rho]}] \label{HgN:eq}
\\ 
& &
\quad 
+\ u \sum_{j=2}^{{\rho}}
  \frac{(n_1+1)! n_j!}{(n_1+n_j+2)!}
  \left(
   \begin{array}{l}
       n_j F_g[n_1+n_j+2,N_j] \\
       + \frac{V(n_j+1)}{n_1+n_j+3}F_g[n_1+n_j+3,N_j]
    \end{array}
  \right), 
  \nonumber
\end{eqnarray}
where 
\begin{equation}
F_g(v_1,\ldots,v_h,x,y,u) = L(v_1) H_g(v_1,\ldots,v_h,x,y,u)
\label{Fg:def:eq}
\end{equation}
for $h \geq 1$, $M(v) = 1-v$ and $L(v)=v(1-v)$.
\end{proposition}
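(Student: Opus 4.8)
The plan is to derive equation (\ref{HgN:eq}) by differentiating the functional equation (\ref{H:functional:dual:eq}) a total of $n_1+1$ times with respect to $v_1$, then evaluating at $v_1=\cdots=v_\rho=V$, term by term. The starting point is the form (\ref{H:recurrence:eq}), which already collects the $H_g(v_1,W,x,y,u)$ terms on the left with coefficient $A(v_1,x,y,u)/v_1$; since the inductive parameter is the total order of differentiation in $v_1$, it is cleaner to first multiply (\ref{H:functional:dual:eq}) through by $v_1(v_1-1)$ (or equivalently work from (\ref{H:recurrence:eq}) after clearing the $1/v_1$), apply $\partial_{v_1}^{n_1+1}$, use the Leibniz rule, and only then specialize $v_1=V$. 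The key simplification is that at $v_1=V$ we have $A(V,x,y,u)=0$ by (\ref{AV0:eq}), so the term in which all $n_1+1$ derivatives land on $H_g$ — which would produce $A[0]H_g[n_1+1,N_{[\rho]}]/V$ — vanishes, and the leading surviving contribution from the left side is the one with exactly one derivative on $A$ and the rest on $H_g(v_1,W)$; this yields $(n_1+1)A[1]H_g[n_1,N_{[\rho]}]/V$ as written, after noting $\partial_{v_1}(1/v_1)$ contributes the other pieces moved to the right.

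I would then treat the four groups of terms on the right of (\ref{H:functional:dual:eq}) separately. \textbf{Quadratic term:} differentiate the product $\tfrac{y v_1}{xu}\big(H_j+\delta\big)H_{g-j}$ summed over $j$ and $X\subseteq W$; the Leibniz expansion in three factors ($v_1$, the first $H$, the second $H$) gives the multinomial sum $\binom{n_1+1}{k,l}M[m]H_j[k,N_X]H_{g-j}[l,N_{[\rho]-X}]$, with $M(v)=1-v$ absorbing the rational prefactor after the left–right rearrangement (the $(1-v_1)$ in (\ref{H:recurrence:eq}) and the combinatorics of $\partial_{v_1}(yv_1/(xu))$ versus the $A/v_1$ normalization must be checked to produce exactly $x\,M[m]$; this bookkeeping is the fussiest part). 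The excluded pairs $(j,X)\neq(0,[\,]),(g,[\rho])$ are precisely the two terms that got moved to the left. \textbf{Genus-reduction term:} $\tfrac{v_1}{u}H_{g-1}(v_1,v_1,W,x,y,u)$ requires the chain rule because $v_1$ appears in two slots; differentiating $n_1+1$ times and evaluating at $v_1=V$ distributes $i$ derivatives to the first slot and $j$ to the second with $i+j+k=n_1+1$, the remaining $k$ landing on the $v_1$ prefactor to give $M[k]=$ value of $1-v$ and its derivatives, producing the $\tfrac1u\binom{n_1+1}{i,j}M[k]H_{g-1}[i,j,N_{[\rho]}]$ sum. \textbf{Linear term:} $\tfrac{v_1 u}{v_1-1}(H_g(v_1,W)-H_g(1,W))$ — after the rearrangement in (\ref{H:recurrence:eq}) this is the source of the $uH_g(1,W,x,y,u)$ constant and the $-(1-v_1)\cdots$ pieces that merged into the left side and into the $A[i]H_g[k]$ sum with $i>0,\ k<n_1$; unwinding this is where one must be most careful, because $A(v,x,y,u)$ itself contains $H_0(v,x,y,u)$ via (\ref{A:def:eq}), so differentiating $A$ reintroduces $H_0$-derivatives, but these are legitimately part of $A[i]$ and need no separate treatment. \textbf{Derivative term:} differentiate (\ref{T:eq})'s integrand; here the substitution $F_g=L(v_1)H_g$ with $L(v)=v(1-v)$ is engineered so that the awkward factor $\tfrac{v_j}{v_j-v_1}(H_g(v_j,\ldots)-H_g(v_1,\ldots))$ becomes a clean divided difference whose $n_1+1$-fold $v_1$-derivative at $v_1=V$ evaluates via the standard identity $\partial_{v_1}^{m}\big[(\phi(v_j)-\phi(v_1))/(v_j-v_1)\big]\big|_{v_1=V} = m!\,\phi[V,\ldots,V,v_j]$ type formula, yielding the $\tfrac{(n_1+1)!n_j!}{(n_1+n_j+2)!}$ coefficients and the two $F_g[n_1+n_j+2,N_j]$, $F_g[n_1+n_j+3,N_j]$ terms (the $+3$ term coming from the extra $\partial/\partial v_j$ and the $v_j$ multiplier outside).

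The main obstacle I anticipate is purely organizational rather than conceptual: correctly propagating the prefactor algebra through the "move $H_g$ to the left and divide by $A/v_1$" step while simultaneously applying Leibniz $n_1+1$ times, so that every rational function ($yv_1/xu$, $v_1/u$, $v_1u/(v_1-1)$, $v_1u$) collapses into the stated polynomials $M(v)=1-v$ and $L(v)=v(1-v)$ and the coefficient $A[1]$. A clean way to manage this is to first rewrite (\ref{H:functional:dual:eq}) in "polynomial-coefficient" form by clearing all denominators (multiplying by $xu(v_1-1)$ or similar), apply $\partial_{v_1}^{n_1+1}$ to that, and only afterwards use $A(V)=0$; then verify that the $v_1=V$ specialization of the cleared equation is equivalent to (\ref{HgN:eq}). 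The divided-difference identity for the $T_g$ term should be isolated as a short lemma (or cited as standard) so the computation does not obscure it. Everything else — the multinomial coefficients, the index ranges $i>0,k<n_1$ and $k+l+m=n_1+1$, the exclusion of $(j,X)\in\{(0,[\,]),(g,[\rho])\}$ — then follows mechanically from matching terms, and the base cases $g=0$ or $\rho=1$ reduce to the already-established equations (\ref{H:0:1:eq}) and (\ref{Hg:V:0:eq}).
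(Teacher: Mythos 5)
Your proposal is correct and follows essentially the same route as the paper: differentiate the rearranged equation (\ref{H:recurrence:eq}) to order $n_1+1$ in $v_1$ (and $n_j$ in the other variables), evaluate at $v_1=\cdots=v_\rho=V$ so that the $A[0]\,H_g[n_1+1,\ldots]$ term vanishes by (\ref{AV0:eq}), isolate $H_g[n_1,N_{[\rho]}]$, and handle the $T_g$ contribution through a separate divided-difference lemma converting it to $F_g$-derivatives — which is precisely the paper's Lemma~\ref{T:F:lemma}.
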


\begin{proof}
Equation~(\ref{HgN:eq}) is 
obtained from Eq.~(\ref{H:recurrence:eq}) as follows:
\begin{enumerate}
\item Partial derivation of~(\ref{H:recurrence:eq}) with respect to the
variables $v_1$, $v_2$, \ldots, $v_{\rho}$ to the respective orders $n_1+ 1$,
$n_2$, \ldots, $n_{\rho}$.
\item \label{step2} Evaluation of this differential equation at $v_1 = \cdots =
v_{\rho} = V$. The function $H_g[n_1+1,\ldots,n_{\rho}]$ is multiplied by $A[0]$ in the
resulting equation, and $A[0]$ is known to be zero (\ref{AV0:eq}).
The functions $T_g[\ldots]$ are replaced by expressions with the functions $F_g[\ldots]$ thanks to
Lemma~\ref{T:F:lemma} below.
\item In the left-hand side of the resulting equation, isolation of the single
term involving the function $H_g[n_1,\ldots,n_{\rho}]$.
\end{enumerate}
By inspection one can check that the right-hand side of (\ref{HgN:eq}) depends
only on some functions $H_g[k,n_2,\ldots,n_{\rho}]$ with $k < n_1$, some
functions $H_g[n'_1,\ldots,n'_{\rho'}]$ with $\rho' < \rho$ and some functions
$H_j[\ldots]$ for $j < g$. Thus, (\ref{HgN:eq}) in a recursive definition of
the family of functions $H_g[n_1,\ldots,n_{\rho}]$ for $g \geq 0$, $\rho \geq 1$
and $n_1,\ldots,n_{\rho} \geq 0$.
\end{proof}

The following lemma relates the partial derivatives of $T_g$ at $v=V$ with the
ones of $F_g$.

\begin{lemma}
\label{T:F:lemma}
For $\rho \geq 2$ and $g, n_1, \ldots, n_{\rho} \geq 0$,
\begin{eqnarray}
& & T_g[n_1+1,N_{[\rho]}]   = \nonumber \\
 & & \quad
\sum _{j=2}^{j={\rho}} 
   \frac{(n_1+1) ! n_j !}{(n_1+n_j+2)!} 
   \left(
   \begin{array}{l}
   n_j F_g[n_1+n_j+2,N_j]  \\
     + \frac{V (n_j+1)}{n_1+n_j+3} F_{g}[n_1+n_j+3,N_j]
   \end{array}
    \right) \label{T:Hg:rel:eq}.
\end{eqnarray}
\end{lemma}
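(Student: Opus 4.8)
~\textbf{Approach.} The plan is to prove Lemma~\ref{T:F:lemma} by a direct computation starting from the definition~(\ref{T:eq}) of $T_g$, differentiating the appropriate number of times in each variable, evaluating at $v_1=\cdots=v_\rho=V$, and recognising the resulting coefficients as those appearing in~(\ref{T:Hg:rel:eq}). The key observation is that $T_g$ is a sum over $j$ of terms that each involve only the pair of variables $v_1$ and $v_j$ in a ``divided-difference'' combination, the remaining variables $v_i$ ($i\neq 1,j$) entering only as passive parameters inside $H_g$. Hence the differentiations with respect to $v_i$ for $i \notin\{1,j\}$ simply pass through and produce the list $N_j$ of orders on those variables, and the whole problem reduces, term by term in $j$, to a one-variable-pair calculation: differentiate
\[
G_j(v_1,v_j) \;:=\; (1-v_1)\, v_j\,\frac{\partial}{\partial v_j}\!\left(\frac{v_j}{v_j-v_1}\bigl(h(v_j)-h(v_1)\bigr)\right)
\]
with respect to $v_1$ to order $n_1+1$ and with respect to $v_j$ to order $n_j$, then set $v_1=v_j=V$; here $h$ stands for the relevant restriction of $H_g$ carrying the passive orders $N_j$.

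\textbf{Key steps.} First I would expand the inner $v_j$-derivative explicitly, writing $\frac{v_j}{v_j-v_1}(h(v_j)-h(v_1))$ and applying the product and quotient rules, so that $G_j$ becomes a combination of $h$, $h'$ evaluated at $v_j$ and $v_1$ with rational-function coefficients in $v_1,v_j$. Second, I would recall that the divided difference $\frac{h(v_j)-h(v_1)}{v_j-v_1}$, upon taking $a$ derivatives in $v_1$ and $b$ derivatives in $v_j$ and evaluating at $v_1=v_j=V$, equals $\frac{a!\,b!}{(a+b+1)!}\,h^{(a+b+1)}(V)$ --- this is the standard Hermite--Genocchi / Taylor-coefficient identity for divided differences of a smooth function. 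Third, using $F_g = L(v_1)H_g$ with $L(v)=v(1-v)$ and $M(v)=1-v$, I would reorganise the terms so that the factor $(1-v_1)$ in front of $T_g$ combines with the $v_j$ coming from $v_j\,\partial/\partial v_j$ and the $v_j/(v_j-v_1)$ to reproduce exactly the weight $L$ inside $F_g$ at the ``merged'' variable; the two summands in~(\ref{T:Hg:rel:eq}), with orders $n_1+n_j+2$ and $n_1+n_j+3$, arise respectively from the term where the extra $v_j\,\partial/\partial v_j$ hits the numerator of the divided difference and from the term where it contributes an additional power of the merged variable (accounting for the shift by one in the derivative order and the extra factor $V(n_j+1)/(n_1+n_j+3)$). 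Finally I would check the combinatorial coefficient: Leibniz expansion of the $(n_1+1)$-fold $v_1$-derivative and $n_j$-fold $v_j$-derivative of a product of the form (polynomial in $v_1,v_j$) $\times$ (divided difference) produces, after evaluation at $V$, precisely the binomial-and-factorial prefactors $\frac{(n_1+1)!\,n_j!}{(n_1+n_j+2)!}$ and its shifted analogue.

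\textbf{Main obstacle.} The routine but delicate part will be the bookkeeping in the third step: correctly tracking how the polynomial prefactors $v_j$, $(1-v_1)$ and the two pieces of $\partial_{v_j}\bigl(v_j(\cdots)/(v_j-v_1)\bigr)$ interact, under Leibniz's rule, with the divided-difference identity, so that the final coefficients collapse to exactly the stated closed form and the factor $L$ reappears inside $F_g$ at the correct argument. In particular one must be careful that the ``$h'$ at $v_j$'' and ``$h$ at $v_1$'' contributions recombine into derivatives of a single divided difference rather than producing spurious lower-order terms; this works because $v_j\,\partial_{v_j}\bigl(\frac{v_j}{v_j-v_1}(h(v_j)-h(v_1))\bigr)$ can be rewritten so that after multiplying by $(1-v_1)$ it is itself expressible through divided differences of $L\cdot h = F_g/1$ (up to the passive orders), which is the content that makes the two-term right-hand side of~(\ref{T:Hg:rel:eq}) emerge. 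Once this identification is made, summing over $j$ from $2$ to $\rho$ gives the lemma.
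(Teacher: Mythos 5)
Your plan follows essentially the same route as the paper's proof: reduce each summand to a two-variable divided-difference computation in $v_1,v_j$ (the other variables being passive), apply the identity $\partial_{x_1}^{a}\partial_{x_2}^{b}\bigl(\frac{\psi(x_1)-\psi(x_2)}{x_1-x_2}\bigr)_{x_1=x_2=V}=\frac{a!\,b!}{(a+b+1)!}\psi^{(a+b+1)}(V)$ with $\psi=F_g$, and let the Leibniz expansion of $\partial_{v_j}^{n_j}\bigl[v_j\,\partial_{v_j}(\cdot)\bigr]$ produce the two terms with the stated coefficients. The one step you flag as delicate --- recombining the prefactors $(1-v_1)$ and $v_j$ so that divided differences of $F_g=L\,H_g$ appear --- is carried out in the paper by adding, inside the relevant derivatives, the terms $(v_j-v_1)^{-1}(v_j-v_1)H_g(v_1,\ldots)$ and $(v_j-v_1)^{-1}v_j(v_j-v_1)H_g(v_j,\ldots)$, whose $\partial_{v_j}$- and $\partial_{v_1}^{\,n_1+1}$-derivatives respectively vanish.
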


\begin{proof}
We can easily prove that
\begin{eqnarray}
\frac{\partial}{\partial v_{j}}
        \left[
          {\frac 
  {(v_{j}-v_1) H_{{g}}(v_{1},[\rho]-\{v_{j}\},x,y,u)}
                       {v_{j}-v_{1}}}
    \right] = 0.
\end{eqnarray}
Then, $T_g(v_1,\ldots,v_{\rho},x,y,u)$ equals
\begin{eqnarray}
 \sum _{j=2}^{j={\rho}} v_{j}
  {\frac {\partial }{\partial v_{j}}}
   \left( \left(v_{j}-v_{1}\right)^{-1}
    \left(
     \begin{array}{l}
  v_{j} (1-v_1) H_{{g}}(v_{j},[\rho]-\{v_{j}\},x,y,u)\\
  - \ v_{1} (1-v_1) H_{{g}}(v_{1},[\rho]-\{v_{j}\},x,y,u)
     \end{array}
     \right)
    \right).
\end{eqnarray}
It also holds that
\begin{eqnarray}
\frac{\partial^{n_1+1}}{\partial v_{1}^{n_1+1}}
        \left[
          {\frac 
  {v_j (v_{j}-v_1) H_{{g}}(v_{j},[\rho]-\{v_{j}\},x,y,u)}
                       {v_{j}-v_{1}}}
    \right] & = & 0,
\end{eqnarray}
so that $\frac{\partial^{n_1+1}}{\partial v_{1}^{n_1+1}}
T_g(v_1,\ldots,v_{\rho},x,y,u)$ equals
\begin{eqnarray}
\sum _{j=2}^{j=r} v_{j}
 \frac{\partial^{n_1+2}}{\partial v_{1}^{n_1+1}\partial v_{j}}
 \left(
  \left(v_{j}-v_{1}\right)^{-1}
  \left(
   \begin{array}{l}
   v_{j} (1-v_j) H_{{g}}(v_{j},[\rho]-\{v_{j}\},x,y,u) \\
   - \ v_{1} (1-v_1) H_{{g}}(v_{1},[\rho]-\{v_{j}\},x,y,u)
   \end{array}
  \right)
 \right)
\end{eqnarray}
i.e.
\begin{eqnarray}
\sum _{j=2}^{j={\rho}} v_{j}
        {\frac {\partial^{n_1+2}}{\partial v_{1}^{n_1+1}\partial v_{j}}}
        \left(
          {\frac 
  {F_{g}(v_{j},[\rho]-\{v_{j}\},x,y,u)-
  F_{g}(v_{1},[\rho]-\{v_{j}\},x,y,u)}
                       {v_{j}-v_{1}}}
    \right).
\end{eqnarray}
Formula (\ref{T:Hg:rel:eq}) is a consequence of
\begin{equation}
\frac{\partial^{n_1+n_2}}{\partial x_1^{n_1}\partial x_{2}^{n_2}}
        \left(
          \frac{\psi(x_1)-\psi(x_2)}
                       {x_{1}-x_{2}}
    \right)_{x_1=x_2=a} = \frac{n_1 ! n_2 !}{(n_1+n_2+1)!}
    \psi^{(n_1+n_2+1)}(a).
\end{equation}
\end{proof}
\noindent The formula
\begin{eqnarray}
F_g[n,N] = \sum_{k+l=n} {n \choose k} L[k] H_g[l,N] \label{F:Hg:rel:eq}
\end{eqnarray} 
is an easy consequence of (\ref{Fg:def:eq}). Thus the right-hand side of
(\ref{HgN:eq}) only depends on some functions $H_g[k,\ldots,n_{\rho}]$ with $k <
n_1$, some functions $H_g[n'_1,\ldots,n'_{\rho'}]$ with $\rho' < \rho$, some
functions $H_j[\ldots]$ for $j < g$ and some functions $A[i]$. A relation
between $A[i]$ and some functions $H_0[j]$ is established in
Section~\ref{Ak:sec}.

\subsubsection{Case $g=0$ and ${\rho}=1$}
\label{Ak:sec}

The function $A[i]$ can be related to some functions $H_0[j]$  as follows:
With $M(v) = 1-v$ and $L(v)=v(1-v)$, Eq.~(\ref{A:def:eq}) is
\begin{equation}
A(v,x,y,u) = v u+M(v)+L(v)(-y+x -2x  H_0(v,x,y,u)). \label{A:H0:v:eq}
\end{equation}
Its instantiation at $v=V$ gives 
\begin{equation}
H_0[0] = \frac{1-q}{1-q-r}.
\end{equation}
For $k \geq 1$, the $k$-th partial derivative of (\ref{A:H0:v:eq}) in $v$ is
\begin{eqnarray}
\frac{\partial^k}{\partial v^k} A(v,x,y,u) & = &
\frac{\partial^k}{\partial v^k}(v u)+\frac{\partial^k}{\partial
v^k} M(v) \nonumber \\
& & +\frac{\partial^k}{\partial
v^k}\left[L(v)(-y+x -2x  H_0(v,x,y,u))\right]
\end{eqnarray}
and its instantiation in $v=V$ is
\begin{eqnarray}
 A[k] & = & \frac{\partial^k}{\partial v^k}(v u)_{|v=V} + M[k] \nonumber \\ 
& & +
\sum_{i+j=k} {k \choose i}
L[i]\left(\frac{\partial^j}{\partial v^j}(-y+x-2x H_0(v,x,y,u))_{|v=V}\right).
\label{A:H0:deriv:eq}
\end{eqnarray}

\noindent Solving (\ref{A:H0:deriv:eq}) for $k = 1$ gives
\begin{eqnarray}
H_0[1] & = & \frac{(1-q)^2 (A[1]+1-p-q-r)}{2 p q (1-q-r)}.
\end{eqnarray}

\noindent For $k \geq 2$, one gets
\begin{eqnarray*}
A[k] & = & 
-2x \sum_{i+j=k} {k \choose i} L[i] H_0[j]
\end{eqnarray*}
since $M[k] = 0$, i.e.
\begin{eqnarray}
A[k] & = &
 -2x \left(L[0] H_0[k]+k L[1] H_0[k-1]+\frac{k(k-1)}{2}
 L[2] H_0[k-2]\right) \label{Ak:eq}
\end{eqnarray}
since  $L[k] = 0$ if $k \geq 3$.

\section{Explicit formulas for small genera}
\label{hyp:series:explicit:small:genus:sec}
This section proposes explicit parametric expressions for the generating
functions that count rooted hypermaps of small positive genus. In
Section~\ref{rooted:hyper:series:3:sec} we count by number of vertices,
 hyperedges and faces; the number of darts can be obtained from these parameters
by Formula~(\ref{genus:formula:hyper}). In
Section~\ref{rooted:hyper:series:dart:sec} we count by number of darts alone.

\subsection{Rooted hypermap series enumerated with three parameters}
\label{rooted:hyper:series:3:sec}
For $g = 1,\ldots,5$ we have computed an explicit expression of $H_g(x,y,u)$
parameterized by $p$, $q$ and $r$, with
$x = p (1-q-r)$, $u = q (1-p-r)$  and $y = r (1-p-q)$, by application of
formulas in Section~\ref{rooted:hyper:series:sec}.
For $g \geq 3$, the 
 expressions are too large to be included in the present text, but a Maple file
 with all the generating functions up to genus 5 is available from the first
 author on request.

A parametric expression of $H_1(x,y,u)$ is
\begin{equation}
H_1(x,y,u) = 
\frac{p \; q \; r \; (1 -p)
 \; (1 -q) \; (1- r) \;}
{\left[(1-p-q-r)^2-4pqr\right]^2}. \label{H1:eq}
\end{equation}
This expression can be derived from \cite[Theorem 3]{Arq87}, with the
correspondence $s = x$, $f = u$, and $a = y$ between variables and the
correspondence $H_1(x,y,u) = x u K_1(1,x,y,u)$ between generating functions.

A parametric expression of $H_2(x,y,u)$ is
\begin{equation}
H_2(x,y,u) = 
\frac{
 p \; q \; r \; (1 -p)
 \; (1 -q) \; (1- r) \; P_2(p,q,r)}{
  \left[(1-p-q-r)^2-4pqr\right]^7}
\end{equation}
where 
\begin{align*}
P_2(p,q,r) = &~ 76 p^6 q^2 r^2- 8 p^4 q^4 r^2 - 8 p^4 q^2 r^4 + 76 p^2 q^6 r^2 
              - 8 p^2 q^4 r^4 + 76 p^2 q^2 r^6  \\
   &+  40 p^7 q r - 76 p^6 q^2 r - 76 p^6 q r^2 - 112 p^5 q^3 r - 228 p^5 q^2
   r^2 - 112 p^5 q r^3 \\
   &+ 8 p^4 q^4 r  + 16 p^4 q^3 r^2 + 16 p^4 q^2 r^3 + 8 p^4 q r^4 - 112 p^3 q^5 r + 16 p^3 q^4 r^2 \\
   &+ 40 p^3 q^3 r^3  + 16 p^3 q^2 r^4 
   - 112 p^3 q r^5 - 76 p^2 q^6 r - 228 p^2 q^5 r^2 \\
   &+ 16 p^2 q^4 r^3 +
   16 p^2 q^3 r^4 - 228 p^2 q^2 r^5 - 76 p^2 q r^6 
    + 40 p q^7 r - 76 p q^6 r^2 \\
   &- 112 p q^5 r^3 + 8 p q^4 r^4 - 112 p q^3 r^5
    - 76 p q^2 r^6 + 40 p q r^7 + p^8
    -  20 p^7 q \\
   &- 20 p^7 r - 35 p^6 q^2 - 64 p^6 q r - 35 p^6
   r^2 + 56 p^5 q^3 + 396 p^5 q^2 r + 396 p^5 q r^2 \\
   &+ 56 p^5 r^3 + 140 p^4 q^4 + 264
   p^4 q^3 r + 393 p^4 q^2 r^2 + 264 p^4 q r^3 + 140 p^4 r^4 \\
   &+ 56 p^3 q^5 
   +  264 p^3 q^4 r - 92 p^3 q^3 r^2 - 92 p^3 q^2 r^3 + 264 p^3 q r^4 + 56 p^3
   r^5 \\
   &- 35 p^2 q^6 + 396 p^2 q^5 r 
    +  393
   p^2 q^4 r^2 - 92 p^2 q^3 r^3 + 393 p^2 q^2 r^4 + 396 p^2 q r^5 \\
   &- 35 p^2 r^6
   - 20 p q^7 - 64 p q^6 r
    +  396 p q^5 r^2 
   + 264 p q^4 r^3 + 264 p q^3 r^4\\
   &+ 396 p q^2 r^5 - 64 p q r^6 - 20 p r^7 +
   q^8 - 20 q^7 r 
   -  35 q^6 r^2 + 56 q^5 r^3\\
   &+ 140 q^4 r^4 + 56 q^3 r^5 - 35 q^2 r^6
   - 20 q r^7 + r^8 + 6 p^7 + 105 p^6 q + 105 p^6 r \\
   &+  21 p^5 q^2 - 116 p^5 q r +
   21 p^5 r^2 - 420 p^4 q^3 - 821 p^4 q^2 r - 821 p^4 q r^2 \\
   &- 420 p^4 r^3 -
   420 p^3 q^4
    -  648 p^3 q^3 r - 316 p^3 q^2 r^2 - 648 p^3 q r^3 - 420 p^3 r^4 \\
   &+ 21
   p^2 q^5 - 821 p^2 q^4 r - 316 p^2 q^3 r^2 
   -  316 p^2 q^2 r^3 - 821 p^2 q r^4 + 21 p^2 r^5 \\
   &+ 105 p q^6 - 116 p q^5 r -
   821 p q^4 r^2 - 648 p q^3 r^3 -  821
   p q^2 r^4 - 116 p q r^5 \\
   &+ 105 p r^6 + 6 q^7 + 105 q^6 r + 21 q^5 r^2 - 420
   q^4 r^3 - 420 q^3 r^4 + 21 q^2 r^5 \\
   &+  105 q r^6 + 6 r^7 - 49 p^6 - 189 p^5 q -
   189 p^5 r  + 315 p^4 q^2 + 479 p^4 q r \\
   &+ 315 p^4 r^2 + 910 p^3 q^3 
    +  1162
   p^3 q^2 r + 1162 p^3 q r^2 + 910 p^3 r^3 + 315 p^2 q^4 \\
%
   & + 1162 p^2 q^3 r +
   720 p^2 q^2 r^2 + 1162 p^2 q r^3 
   +  315 p^2 r^4 - 189 p q^5 + 479 p q^4 r \\
   &+ 1162 p
   q^3 r^2 + 1162 p q^2 r^3 + 479 p q r^4 - 189 p r^5 - 49 q^6
  - 189 q^5 r \\
  &+ 315
   q^4 r^2 + 910 q^3 r^3 + 315 q^2 r^4 - 189 q r^5 - 49 r^6 + 112 p^5 + 70 p^4 q
   \\ 
  &+ 70 p^4 r
    -  770 p^3 q^2 - 876 p^3 q r - 770 p^3 r^2 - 770 p^2 q^3 - 1380 p^2
   q^2 r \\
  &- 1380 p^2 q r^2 - 770 p^2 r^3 + 70 p q^4 
  -  876 p q^3 r - 1380 p q^2
   r^2 - 876 p q r^3 \\
  &+ 70 p r^4 + 112 q^5 + 70 q^4 r - 770 q^3 r^2 - 770 q^2
   r^3 +  70 q r^4 + 112 r^5 \\
  &- 105 p^4 + 210 p^3 q + 210 p^3 r + 735 p^2 q^2 +
   1034 p^2 q r + 735 p^2 r^2 + 210 p q^3 \\
  &+  1034 p q^2 r + 1034 p q r^2 + 210 p r^3 -
   105 q^4 + 210 q^3 r + 735 q^2 r^2 + 210 q r^3 \\
  &- 105 r^4 + 14 p^3
    -  315 p^2 q
   - 315 p^2 r - 315 p q^2 - 672 p q r - 315 p r^2 + 14 q^3 \\
  & - 315 q^2 r - 315
   q r^2 + 14 r^3  
  +  49 p^2 + 175 p q + 175 p r + 49 q^2 + 175 q r \\
  & + 49 r^2 - 36 p
   - 36 q - 36 r + 8.
\end{align*}

\noindent Remark: For $g = 0$, the formula
\begin{eqnarray}
H_0(x,y,u) = p q r (1-p-q-r) \label{H0:param:eq:2}
\end{eqnarray}
can be derived from \cite{Arq86a}, with the correspondence $s = x$, $f = u$, and
$a = y$ between variables and the correspondence $H_0(x,y,u) = x
u K_0(1,x,y,u)$ between generating functions.

\subsection{Rooted hypermap series enumerated by number of darts} 
\label{rooted:hyper:series:dart:sec}
Let $H_{g}(z)$ be the ordinary generating function of rooted hypermaps
on the orientable surface of genus $g \geq 0$, where the exponent of variable $z$
is the number $d$ of darts.

\subsubsection{Generating functions}
For $g$ from $0$ to $6$, a parametric expression of $H_g(z)$, where $z =
\tau(1-2\tau)$ and $\tau = 0$ when $z = 0$, is
\begin{eqnarray}
H_0(z) & = & \frac{\tau^3 \; (1-3 \; \tau)}{z^2},
\label{H0mu:eq}
\\
H_1(z)  & = &  \frac{ \tau^3}{(1- \tau) \; (1-4 \;
\tau)^2}, \label{H1mu:eq}
\\
H_2(z)  & = &  \frac{ 4 \; z^2 \; \tau^3
\; (51 \; \tau^4-77 \; \tau^3+48 \; \tau^2-15
\; \tau+2) }{(1- \tau)^5  \;
(1-4 \; \tau)^7}, \label{H2mu:eq}
\\
H_3(z)  & = &  \frac{4 \; z^4 \; \tau^3
\; P_3(z) }{(1-
\tau)^9 \; (1-4 \;  \tau)^{12}  },   \label{H3mu:eq}
\\
H_4(z)  & = &  \frac{
4 \; z^6 \; \tau^3 \; P_4(z) }{
(1- \tau)^{13} \; (1-4 \;  \tau)^{17}}, 
\label{H4mu:eq}
\\
H_5(z)  & = &  \frac{
4 \; z^{8} \; \tau^3 \; P_5(z)
}{ (1- \tau)^{17}
\; (1-4 \;  \tau)^{22} } \label{H5mu:eq}
\\
H_6(z)  & = &  \frac{
4 \; z^{10} \; \tau^{3} P_6(z)
}{ (1-\tau)^{21} \; (1 - 4 \; \tau)^{27}}
 \label{H6mu:eq}
\end{eqnarray}

\noindent with
\begin{align*}
P_3(z) = &~ 28496 \; \tau^9-36888 \;
\tau^8-13164 \; \tau^7+61676 \; \tau^6 -61872
\; \tau^5+35172 \; \tau^4 \\
&-13168 \;
\tau^3+3360 \; \tau^2-552 \; \tau+45,\\
P_4(z) = & ~ 32375616
\; \tau^{14}+15509760 \; \tau^{13}-243313744
\; \tau^{12}+442844592 \; \tau^{11} \\
&-389268768
\; \tau^{10}+170357328 \; \tau^9+1281984 \;
\tau^8-53553072 \; \tau^7 \\
&+39814032 \; \tau^6 -17597520
\; \tau^5+5541192 \; \tau^4-1320920 \;
\tau^3+239697 \; \tau^2 \\
&-30456 \; \tau+2016, \\
P_5(z) = & ~ 61742404608 \; \tau^{19}+239043447552
\; \tau^{18}-1163002515456 \; \tau^{17} \\
&+1403096348736
\; \tau^{16}+338393916800 \; \tau^{15}-2962590413376
\; \tau^{14} \\
&+4243997599488 \; \tau^{13}-3552865706240
\; \tau^{12}+2000782619136 \; \tau^{11} \\
&-761565230016
\; \tau^{10}+165542511744 \; \tau^9+7568059872
\; \tau^8 \\
&-23295865824 \; \tau^7+11016156244
\; \tau^6-3336459144 \; \tau^5+761835465 \;
\tau^4 \\
& -141393220 \; \tau^3+21738240 \; \tau^2-2490480
\; \tau+151200 
\end{align*}
and
\begin{align*}
P_6(z) = &~ 178054771302400 \;
\tau^{24}+1584534210564096 \; \tau^{23}-4933663711730688 \; \tau^{22} \\
&-2073822560019456 \; \tau^{21}+28025505345377280 \;
\tau^{20}\\
&-55010184951564288 \; \tau^{19} +54283457920223232 \;
\tau^{18}\\
&-22997164994372352 \; \tau^{17}-13439214645718272 \; \tau^{16} \\
&+31734000656779264 \; \tau^{15}-29719458122609664 \;
\tau^{14}\\
&+18704646148809216 \; \tau^{13} -8736443315384448 \;
\tau^{12}\\
&+3098312828500416 \; \tau^{11}-813298324826016 \; \tau^{10}
+138473163256176 \; \tau^{9}\\
&-4043551301232 \; \tau^{8}-6580517850696 \;
\tau^{7}+2630924485729 \; \tau^{6}\\
& -626336383104 \; \tau^{5}+112079088144 \;
\tau^{4}-17314508592 \; \tau^{3}+2485496880 \; \tau^{2}\\
& -284717376 \;
\tau+17107200.
\end{align*}
 
We have also computed the generating functions for $7 \leq g \leq 11$. Their 
 expressions are too large to be included in the present text, but a Maple file
  is available from the first author on request.

A. Mednykh and R. Nedela used our formulas (\ref{H0mu:eq}) to (\ref{H3mu:eq}) to
find explicit formulas for the number of rooted hypermaps for genus $g = 0, 1,
2$ and $3$~\cite{Mednykh2016}.

\subsection{Other parameterization}
\label{zograf:sec}
In a private communication to the second author, P. Zograf suggests the  
parameterization
\begin{equation}
z = \frac{t}{(1+2 t)^2}. \label{taut:eq}
\end{equation}
After adding the condition that $t = 0$ when $z = 0$, it corresponds to
\begin{equation}
t = \frac{
1-4 z-\sqrt{1-8 z}
}{8z}.
\end{equation}

These two parameterizations are equivalent. 
The one can be transformed into the other by means of the following substitutions:
\begin{equation}
\tau = \frac{t}{1+2 t}
\end{equation}
and
\begin{equation}
t = \frac{\tau}{1-2 \tau}.
\end{equation}

By means of these substitutions, the following parametric expressions in $t$
can be obtained from the parametric expressions (\ref{H0mu:eq})-(\ref{H6mu:eq})
for $H_g(t)$ in $\tau$:
\begin{eqnarray*}
H_{0}(z) & = &  t \; (1-t),\\
H_{1}(z) & = &  \frac{t^3}{(1+t)(1-2 t)^2},
\end{eqnarray*} 
\begin{eqnarray*}
H_{2}(z) & = &  \frac{4 \; t^5 \; 
(1+2 t) \; (t^4-t^3+6 \;
t^2+t+2)} {(1+t)^5 (1-2 t)^7},
\\
H_{3}(z) & = &  4 \; t^7 \; (1+2 t)
\; (1+t)^{-9} (1-2 t)^{-12} \; (80 \;
t^9-120 \; t^8+1500 \; t^7+1036 \; t^6
\\
& &
 + \ 3768 \;
t^5+2820 \; t^4 +2288 \; t^3 + 1008 \; t^2+258 \;
 t+45),
\\
H_{4}(z) & = &  4 \; t^9 \; (1+2 t)
\; (1+t)^{-13} (1-2 t)^{-17} \; (16768 \;
t^{14}-33536 \; t^{13}
 \\
 & & 
 + \ 653776 \; t^{12}+786480 \; t^{11}+ 4358016
 \; t^{10} + 6151056 \; t^9+10059552 \; t^8 \\
 & & 
 + \ 10217040 \; t^7 + 8418240 \;
 t^6+5227024 \; t^5+2365888 \; t^4
 + 800128
 \; t^3\\
 & & 
 +\ 181665 \; t^2 + 25992 \; t+2016),
\\
H_{5}(z) & = &  4 \; t^{11} \; (1+2 t)
\; (1+t)^{-17} (1-2 t)^{-22} \;  (6732800
\; t^{19}-16832000 \; t^{18}
\\
& &
 +\ 450011520 \; t^{17}+ 773106240 \;
 t^{16}+5764983552 \; t^{15}+11910647232 \; t^{14}
 \\
 & &
  +\ 29130502912 \; t^{13}+46090300928 \;
 t^{12}+63452543616 \; t^{11}
\\
& &
 +\ 68713116608 \;
 t^{10}+60654218080 \; t^9+43591208976 \; t^8
 \\
 & & +\ 25142796864 \; t^7+11637842232 \;
 t^6+4232899206 \; t^5+1181820745 \; t^4
\\
& &
 +\ 245635580 \; t^3+35501760 \; t^2+3255120
 \; t+151200), 
\end{eqnarray*}
\begin{eqnarray*}
H_{6}(z)  & = & 4 \; t^{13} \; (1+2t) 
\; (1+t)^{-21}  (1-2t)^{-27}  \; (4424052736
\; t^{24}-13272158208 \; t^{23} \\
& &
 +\ 452750478336 \; t^{22} + 1012254206976
\; t^{21} +9488911137792 \; t^{20}
\\
& & 
 +\ 25803592571904
\; t^{19} + 83891900050944 \; t^{18} +180120643165440
\; t^{17} \\
& &
+\ 346626234587904 \; t^{16}  + 535272874975232
\; t^{15} +701152993531392 \; t^{14}  \\
& &
+ \ 771688966862592
\; t^{13} + 716686355273472 \; t^{12} +563018634260736
\; t^{11} \\
& & 
+\ 372549313187520 \; t^{10} + 207088794784752 \;
t^9+96021082581732 \; t^8 \\
& &
+ \ 36765061031004 \; t^7 + 11475757049569 \;
t^6+2863185376896 \; t^5\\
&&
+\ 556090776432 \;
t^4+80913152016 \; t^3+ 8274846384 \; t^2+536428224
\; t\\
&& +\ 17107200).
\end{eqnarray*}

For $0 \leq g \leq 3$, these expressions correspond to $F_g(t)$ in Zograf's
communication. Moreover, they reveal an extra factorization 
by $4 (1+2 t)$ for $g \geq 2$.

\section{Efficient enumeration of rooted and sensed unrooted hypermaps by number
of darts, vertices and hyperedges}
\label{unrooted:hyp:sec}

We recall that a sensed map or hypermap is an equivalence class of (unrooted)
maps or hypermaps under orientation-preserving isomorphism.

Before enumerating sensed hypermaps we first need to enumerate rooted hypermaps.
 We use an efficient method of counting rooted hypermaps by number of darts,
faces, vertices and hyperedges or, equivalently~\cite{Walsh75}, 2-coloured
bipartite maps rooted at a white vertex by number of edges, faces, white vertices and black
vertices, presented by Kazarian and Zograf~\cite{KZ15}, and then count sensed
2-coloured bipartite maps and hypermaps with the same parameters using the same
method we used~\cite{WGM12,GW14} to count sensed maps by number of edges, faces
and vertices. The recurrence (formula (11) in \cite{KZ15}), with $f$ changed to
$H$, is as follows. Define $H_{g,d}$ to be a
homogeneous polynomial in the three variables $t$, $u$, and $v$.  The coefficient of $t^f u^b v^w$ in
$H_{g,d}$ is the number of 2-coloured bipartite maps of genus $g$ with $d$
edges, $f$ faces, $b$ black vertices and $w$ white vertices rooted at a white
vertex or, equivalently, the number of rooted hypermaps of genus $g$ with $d$
darts, $f$ faces, $b$ hyperedges and $w$ vertices.  Then $H_{0,1} = tuv$ and
\begin{eqnarray}
& & (d+1) H_{g,d} =
\nonumber
\\
& & \quad (2d-1)(t+u+v)H_{g,d-1} \nonumber
\\
& & \quad + \ (d-2) \left(2(tu+tv+uv)-(t^2+u^2+v^2)\right) H_{g,d-2}
\label{unrooted:1:eq}
\\
& & \quad + \ (d-1)^2 (d-2) H_{g-1,d-2}  + \sum_{i=0}^{g} \sum_{j=1}^{d-3}
(4+6j)(d-2-j)H_{i,j}H_{g-i,d-2-j}.
\nonumber
\end{eqnarray}

In~\cite{WGM12} we collaborated with Mednykh to enumerate rooted and sensed
maps. Mednykh enumerated maps of genus up to 11 by number of edges alone, while
we enumerated maps of genus up to 10 by number of edges and vertices.  The
method we used to enumerate rooted maps is presented in~\cite{WG14}.  The method
we used to enumerate sensed maps is based on Liskovets'
refinement~\cite{Liskovets10} of the method Mednykh and Nedela used to enumerate
sensed map of genus up to 3 by number of edges~\cite{MN06}.
Later we used a more efficient method of enumerating rooted maps, presented
in~\cite{CC15}, to enumerate rooted and sensed maps of genus up to
50~\cite{GW14}.

To describe here the modifications we made to pass from maps to 2-coloured
bipartite maps we need to briefly discuss a few of the concepts described in
more detail in~\cite{WGM12}.  All the automorphisms of a map on an orientable
surface are periodic.  If the period is ${L} > 1$, then the automorphism divides
the map into ${L}$ isomorphic copies of a smaller map, called the \emph{quotient
map}.  Most of the \emph{cells} (vertices, edges and faces) are in orbits of
length ${L}$ under the automorphism; those that aren't are called \emph{branch
points}. For example, if a map is drawn on the surface of a sphere which
undergoes a rotation through $360/{L}$ degrees, the two cells through which the
axis of rotation pass are fixed; so they are each in an orbit of length $1$ for
any ${L}$.  For maps of higher genus, not all the branch points are on orbits of
length $1$.  For example, if a torus is represented as a square with opposite
edges identified in pairs, and is rotated by $90$ degrees (period $4$), then the
centre of the square is a branch point of orbit length $1$ and so is the point
represented by all four corners of the square, but the middle of the sides of
the square are two branch points of orbit length $2$: the point represented by
the middle of both vertical sides of the square is taken by the rotation onto
the point represented by the middle of both horizontal sides, and vice versa; so
it takes two rotations to take either of these points back onto itself.  Also,
if the middle of an edge is a branch point, then the quotient map contains half
of that edge – a \emph{dangling semi-edge}.

An automorphism of a map $M$ of genus $G$ is characterized by the following
parameters: the period ${L}$, the genus $g$ of its quotient map and the number of
branch points of each orbit length.  If each orbit length is replaced by its
\emph{branch index} (${L}$ divided by the orbit length), we obtain what is called
an \emph{orbifold signature} in~\cite{MN06}.  In~\cite{MN06} a method is
presented for determining which orbifold signatures could characterize an
automorphism of a map of genus $G$ (a $G$-\emph{admissible orbifold}) and how
many such automorphisms could be characterized by that orbifold signature; a
variant of that method is presented in~\cite{Liskovets10}, and this is the one
we use except that we deal with orbit lengths instead of branch indices.  The
method used in~\cite{MN06} to enumerate sensed maps of genus $G$ with $E$ edges
by number of edges can be roughly described as follows.  For each $G$-admissible
orbifold $O$, let $g$ be the genus of the quotient map, ${L}$ be the period and
$q_i$ be the number of branch points with branch index $i$.  Then the number
$\nu_O(d)$ of rooted maps with $d$ darts that could serve as a quotient map for
an automorphism with that signature once the branch points are pasted onto the map
in all possible ways is given by
\begin{equation}
\nu_O(d) = \sum_{s=0}^{q_2}{d\choose s}
{(d-s) /2\!+\! 2\!-\!2g\choose
q_2\!-\!s,q_3,\dots,q_{L}}{N}_g((d-s)/2),
\label{nuO:eq}
\end{equation}
where $N_g(n)$ is the number of rooted maps of genus $g$ with $n$ edges ($0$ if
$n$ is not an integer).  Here $s$ is the number of dangling semi-edges in the
quotient map $m$, all of which must be in orbits of length ${L}/2$ so that they
represent normal edges in the original map $M$.  The binomial coefficient is the
number of ways of inserting dangling semi-edges into the rooted map multiplied
by $d/(d-s)$ because there are $d$ ways to root the map once the dangling edges
have been inserted and only $d-s$ ways to root it without the dangling edges.
The multinomial coefficient is the number of ways to distribute the branch
points with the various branch indices among the non-edges of the quotient map;
the number at the top of the multinomial coefficient is the number of non-edges
and is given by the Euler-Poincaré formula~(\ref{euler:eq}).
Then the number of sensed maps of genus $G$ with $E$ edges
is
\begin{equation}
\frac{1}{2E} \sum_{{L} \mid E}
\sum_{O} Epi_0(\pi_1(O),Z_{{L}})\, \nu_O(2E / {L}),
\label{unrooted:eq}
\end{equation}
where $O$ runs over all the $G$-admissible orbifolds with period ${L}$ and
$Epi_0(\pi_1(O),Z_{{L}})$ is the number of automorphisms that have the orbifold
signature of $O$.

In~\cite{WGM12} we distributed the branch points that aren't on dangling
semi-edges among the vertices and faces separately.  The quotient map of a
bipartite map can't contain any dangling semi-edges; otherwise the lifted map
would have an edge joining two vertices of the same colour. Here we distribute
the branch points among the white vertices, black vertices and faces, and, like
in~\cite{WGM12}, we don't use a formula like (\ref{unrooted:eq});  instead we
compute the contribution of each orbifold signature to the number of sensed
$2$-coloured bipartite maps whose number of white vertices, black vertices,
faces and edges are allowed to vary within a user-defined upper bound on the
number of edges.

Suppose that the quotient map is of genus $g$ and has $w$ white vertices, $b$
black vertices and $f$ faces.  Then the number $e$ of edges can be
calculated from the formula
\begin{equation}
f - e + w + b = 2(1-g)  \label{hyper:genus:eq}                                                        
\end{equation}
and the number $d$ of darts is $2e$.  Suppose also that among the branch
points of orbit length $i$, $w_i$ are on a white vertex, $b_i$ are on a black
vertex and $f_i$ are in a face.  We denote by $w_L$, $b_L$ and $f_L$ the number
of white vertices, black vertices and faces, respectively, that do not contain a
branch point.  The original map will have $W$ white vertices, $B$ black vertices
and $F$ faces, where

\begin{equation}
W = \sum_{i=1}^{{L}} i w_i, B = \sum_{i=1}^{{L}} i b_i \text{ and } F =
\sum_{i=1}^{{L}} i f_i, \label{WBF:eq}
\end{equation}
and the total number $E$ of edges is equal to ${L} \, e = F + W + B –
2(1-g)$.

The binomial coefficient in (\ref{nuO:eq}) disappears because the quotient map
can't contain any dangling semi-edges. The multinomial coefficient must be
replaced by the number of ways to distribute the branch points among the white
vertices, black vertices and faces.  Then (\ref{nuO:eq}) becomes

\begin{eqnarray}
& & \nu_O(d,w,b,f) = \nonumber \\
& & \quad {w\choose w_1,w_2,\ldots,w_{{L}}}
{b\choose b_1,b_2,\ldots,b_{{L}}}
{f\choose f_1,f_2,\ldots,f_{{L}}}
 N_g(d,w,b,f),
\label{nuO:hyp:eq}
\end{eqnarray}
where $d$ is the number of edges in the quotient maps on both sides of the
formula (or the number of darts in the corresponding hypermaps)
and $N_g(d,w,b,f)$ is the number of $2$-coloured bipartite maps with $d$ edges
with $w$ white vertices, $b$ black vertices and $f$ faces, rooted at a white
vertex.  For this number to be positive, the sum of all the $w_i$ cannot exceed
$w$ with a similar bound on the sum of all the $b_i$ and the sum of all the
$f_i$; so $w$, $b$ and $f$ each starts at its respective sum and increases by
$1$ until the number $E$ of edges in the original map exceeds a user-defined
maximum.  With each increase of $w$, $b$ or $f$, one of the multinomial
coefficients in (\ref{nuO:hyp:eq}) gets updated using a single multiplication
and division. The product of these three multinomial coefficients must be
computed for all sets of non-negative integers such that for each $i$,
$w_i+b_i+f_i$ is equal to the total number of branch points of orbit length $i$.

Once (\ref{nuO:hyp:eq}) is multiplied by the number of automorphisms with the
current orbifold signature, we get the contribution of that signature and the
numbers $w_i$, $b_i$ and $f_i$ to $E$ times the number of sensed $2$-coloured
bipartite maps of genus $G$ with $E$ edges, $F$ faces, $B$ black vertices and
$W$ white vertices.  This contribution is added to the appropriate element of an
array, initially $0$, and when all the contributions have been tallied, for each
$E$, $F$, $W$ and $B$ the corresponding array element is divided by $E$ (not
$2E$ because the root must be incident to a white vertex) to give the number of
sensed $2$-coloured bipartite maps of genus $G$ with $E$ edges, $F$ faces, $B$
black vertices and $W$ white vertices or, equivalently, the number of sensed
hypermaps of genus $G$ with $E$ darts, $F$ faces, $B$ hyperedges and $W$
vertices.

This enumeration was done with a program written in C++ using CLN to treat big
integers.  It enumerated rooted and sensed hypermaps of genus up to 24 with up
to 50 darts as fast as it could display the numbers on the screen.  The numbers
coincide with those obtained by generating the hypermaps~\cite{Walsh15}. 
The source code is available from the second author on request.

\bibliographystyle{plain}

\newpage

\appendix

\section{First numbers of rooted hypermaps}
\label{rooted:appendix}

The following sections show the numbers \texttt{h}
of rooted hypermaps of genus $g$ with $d$ darts, $v$ vertices, $e$ edges and $d
- v - e + 2(1 – g)$ faces, for $g \leq 6$ and $d \leq 14$.

\subsection{Genus 0}
\begin{multicols}{3}
\begin{scriptsize}
\begin{verbatim}
   d   v   e   f   h
   1   1   1   1   1

   1         sum   1

   2   1   1   2   1
   2   1   2   1   1
   2   2   1   1   1

   2         sum   3

   3   1   1   3   1
   3   1   2   2   3
   3   2   1   2   3
   3   1   3   1   1
   3   2   2   1   3
   3   3   1   1   1

   3         sum   12

   4   1   1   4   1
   4   1   2   3   6
   4   2   1   3   6
   4   1   3   2   6
   4   2   2   2   17
   4   3   1   2   6
   4   1   4   1   1
   4   2   3   1   6
   4   3   2   1   6
   4   4   1   1   1

   4         sum   56

   5   1   1   5   1
   5   1   2   4   10
   5   2   1   4   10
   5   1   3   3   20
   5   2   2   3   55
   5   3   1   3   20
   5   1   4   2   10
   5   2   3   2   55
   5   3   2   2   55
   5   4   1   2   10
   5   1   5   1   1
   5   2   4   1   10
   5   3   3   1   20
   5   4   2   1   10
   5   5   1   1   1

   5         sum   288

   6   1   1   6   1
   6   1   2   5   15
   6   2   1   5   15
   6   1   3   4   50
   6   2   2   4   135
   6   3   1   4   50
   6   1   4   3   50
   6   2   3   3   262
   6   3   2   3   262
   6   4   1   3   50
   6   1   5   2   15
   6   2   4   2   135
   6   3   3   2   262
   6   4   2   2   135
   6   5   1   2   15
   6   1   6   1   1
   6   2   5   1   15
   6   3   4   1   50
   6   4   3   1   50
   6   5   2   1   15
   6   6   1   1   1

   6         sum   1584

   7   1   1   7   1
   7   1   2   6   21
   7   2   1   6   21
   7   1   3   5   105
   7   2   2   5   280
   7   3   1   5   105
   7   1   4   4   175
   7   2   3   4   889
   7   3   2   4   889
   7   4   1   4   175
   7   1   5   3   105
   7   2   4   3   889
   7   3   3   3   1694
   7   4   2   3   889
   7   5   1   3   105
   7   1   6   2   21
   7   2   5   2   280
   7   3   4   2   889
   7   4   3   2   889
   7   5   2   2   280
   7   6   1   2   21
   7   1   7   1   1
   7   2   6   1   21
   7   3   5   1   105
   7   4   4   1   175
   7   5   3   1   105
   7   6   2   1   21
   7   7   1   1   1

   7         sum   9152

   8   1   1   8   1
   8   1   2   7   28
   8   2   1   7   28
   8   1   3   6   196
   8   2   2   6   518
   8   3   1   6   196
   8   1   4   5   490
   8   2   3   5   2436
   8   3   2   5   2436
   8   4   1   5   490
   8   1   5   4   490
   8   2   4   4   3985
   8   3   3   4   7500
   8   4   2   4   3985
   8   5   1   4   490
   8   1   6   3   196
   8   2   5   3   2436
   8   3   4   3   7500
   8   4   3   3   7500
   8   5   2   3   2436
   8   6   1   3   196
   8   1   7   2   28
   8   2   6   2   518
   8   3   5   2   2436
   8   4   4   2   3985
   8   5   3   2   2436
   8   6   2   2   518
   8   7   1   2   28
   8   1   8   1   1
   8   2   7   1   28
   8   3   6   1   196
   8   4   5   1   490
   8   5   4   1   490
   8   6   3   1   196
   8   7   2   1   28
   8   8   1   1   1

   8         sum   54912

   9   1   1   9   1
   9   1   2   8   36
   9   2   1   8   36
   9   1   3   7   336
   9   2   2   7   882
   9   3   1   7   336
   9   1   4   6   1176
   9   2   3   6   5754
   9   3   2   6   5754
   9   4   1   6   1176
   9   1   5   5   1764
   9   2   4   5   13941
   9   3   3   5   26004
   9   4   2   5   13941
   9   5   1   5   1764
   9   1   6   4   1176
   9   2   5   4   13941
   9   3   4   4   42015
   9   4   3   4   42015
   9   5   2   4   13941
   9   6   1   4   1176
   9   1   7   3   336
   9   2   6   3   5754
   9   3   5   3   26004
   9   4   4   3   42015
   9   5   3   3   26004
   9   6   2   3   5754
   9   7   1   3   336
   9   1   8   2   36
   9   2   7   2   882
   9   3   6   2   5754
   9   4   5   2   13941
   9   5   4   2   13941
   9   6   3   2   5754
   9   7   2   2   882
   9   8   1   2   36
   9   1   9   1   1
   9   2   8   1   36
   9   3   7   1   336
   9   4   6   1   1176
   9   5   5   1   1764
   9   6   4   1   1176
   9   7   3   1   336
   9   8   2   1   36
   9   9   1   1   1

   9         sum   339456

  10   1   1  10   1
  10   1   2   9   45
  10   2   1   9   45
  10   1   3   8   540
  10   2   2   8   1410
  10   3   1   8   540
  10   1   4   7   2520
  10   2   3   7   12180
  10   3   2   7   12180
  10   4   1   7   2520
  10   1   5   6   5292
  10   2   4   6   40935
  10   3   3   6   75840
  10   4   2   6   40935
  10   5   1   6   5292
  10   1   6   5   5292
  10   2   5   5   60626
  10   3   4   5   179860
  10   4   3   5   179860
  10   5   2   5   60626
  10   6   1   5   5292
  10   1   7   4   2520
  10   2   6   4   40935
  10   3   5   4   179860
  10   4   4   4   288025
  10   5   3   4   179860
  10   6   2   4   40935
  10   7   1   4   2520
  10   1   8   3   540
  10   2   7   3   12180
  10   3   6   3   75840
  10   4   5   3   179860
  10   5   4   3   179860
  10   6   3   3   75840
  10   7   2   3   12180
  10   8   1   3   540
  10   1   9   2   45
  10   2   8   2   1410
  10   3   7   2   12180
  10   4   6   2   40935
  10   5   5   2   60626
  10   6   4   2   40935
  10   7   3   2   12180
  10   8   2   2   1410
  10   9   1   2   45
  10   1  10   1   1
  10   2   9   1   45
  10   3   8   1   540
  10   4   7   1   2520
  10   5   6   1   5292
  10   6   5   1   5292
  10   7   4   1   2520
  10   8   3   1   540
  10   9   2   1   45
  10  10   1   1   1

  10         sum   2149888

  11   1   1  11   1
  11   1   2  10   55
  11   2   1  10   55
  11   1   3   9   825
  11   2   2   9   2145
  11   3   1   9   825
  11   1   4   8   4950
  11   2   3   8   23694
  11   3   2   8   23694
  11   4   1   8   4950
  11   1   5   7   13860
  11   2   4   7   105435
  11   3   3   7   194304
  11   4   2   7   105435
  11   5   1   7   13860
  11   1   6   6   19404
  11   2   5   6   216601
  11   3   4   6   634865
  11   4   3   6   634865
  11   5   2   6   216601
  11   6   1   6   19404
  11   1   7   5   13860
  11   2   6   5   216601
  11   3   5   5   931854
  11   4   4   5   1482250
  11   5   3   5   931854
  11   6   2   5   216601
  11   7   1   5   13860
  11   1   8   4   4950
  11   2   7   4   105435
  11   3   6   4   634865
  11   4   5   4   1482250
  11   5   4   4   1482250
  11   6   3   4   634865
  11   7   2   4   105435
  11   8   1   4   4950
  11   1   9   3   825
  11   2   8   3   23694
  11   3   7   3   194304
  11   4   6   3   634865
  11   5   5   3   931854
  11   6   4   3   634865
  11   7   3   3   194304
  11   8   2   3   23694
  11   9   1   3   825
  11   1  10   2   55
  11   2   9   2   2145
  11   3   8   2   23694
  11   4   7   2   105435
  11   5   6   2   216601
  11   6   5   2   216601
  11   7   4   2   105435
  11   8   3   2   23694
  11   9   2   2   2145
  11  10   1   2   55
  11   1  11   1   1
  11   2  10   1   55
  11   3   9   1   825
  11   4   8   1   4950
  11   5   7   1   13860
  11   6   6   1   19404
  11   7   5   1   13860
  11   8   4   1   4950
  11   9   3   1   825
  11  10   2   1   55
  11  11   1   1   1

  11         sum   13891584

  12   1   1  12   1
  12   1   2  11   66
  12   2   1  11   66
  12   1   3  10   1210
  12   2   2  10   3135
  12   3   1  10   1210
  12   1   4   9   9075
  12   2   3   9   43098
  12   3   2   9   43098
  12   4   1   9   9075
  12   1   5   8   32670
  12   2   4   8   245223
  12   3   3   8   449988
  12   4   2   8   245223
  12   5   1   8   32670
  12   1   6   7   60984
  12   2   5   7   666996
  12   3   4   7   1936308
  12   4   3   7   1936308
  12   5   2   7   666996
  12   6   1   7   60984
  12   1   7   6   60984
  12   2   6   6   925190
  12   3   5   6   3915576
  12   4   4   6   6195560
  12   5   3   6   3915576
  12   6   2   6   925190
  12   7   1   6   60984
  12   1   8   5   32670
  12   2   7   5   666996
  12   3   6   5   3915576
  12   4   5   5   9032898
  12   5   4   5   9032898
  12   6   3   5   3915576
  12   7   2   5   666996
  12   8   1   5   32670
  12   1   9   4   9075
  12   2   8   4   245223
  12   3   7   4   1936308
  12   4   6   4   6195560
  12   5   5   4   9032898
  12   6   4   4   6195560
  12   7   3   4   1936308
  12   8   2   4   245223
  12   9   1   4   9075
  12   1  10   3   1210
  12   2   9   3   43098
  12   3   8   3   449988
  12   4   7   3   1936308
  12   5   6   3   3915576
  12   6   5   3   3915576
  12   7   4   3   1936308
  12   8   3   3   449988
  12   9   2   3   43098
  12  10   1   3   1210
  12   1  11   2   66
  12   2  10   2   3135
  12   3   9   2   43098
  12   4   8   2   245223
  12   5   7   2   666996
  12   6   6   2   925190
  12   7   5   2   666996
  12   8   4   2   245223
  12   9   3   2   43098
  12  10   2   2   3135
  12  11   1   2   66
  12   1  12   1   1
  12   2  11   1   66
  12   3  10   1   1210
  12   4   9   1   9075
  12   5   8   1   32670
  12   6   7   1   60984
  12   7   6   1   60984
  12   8   5   1   32670
  12   9   4   1   9075
  12  10   3   1   1210
  12  11   2   1   66
  12  12   1   1   1

  12         sum   91287552

  13   1   1  13   1
  13   1   2  12   78
  13   2   1  12   78
  13   1   3  11   1716
  13   2   2  11   4433
  13   3   1  11   1716
  13   1   4  10   15730
  13   2   3  10   74217
  13   3   2  10   74217
  13   4   1  10   15730
  13   1   5   9   70785
  13   2   4   9   525525
  13   3   3   9   960960
  13   4   2   9   525525
  13   5   1   9   70785
  13   1   6   8   169884
  13   2   5   8   1827683
  13   3   4   8   5264545
  13   4   3   8   5264545
  13   5   2   8   1827683
  13   6   1   8   169884
  13   1   7   7   226512
  13   2   6   7   3356522
  13   3   5   7   14019928
  13   4   4   7   22089600
  13   5   3   7   14019928
  13   6   2   7   3356522
  13   7   1   7   226512
  13   1   8   6   169884
  13   2   7   6   3356522
  13   3   6   6   19315114
  13   4   5   6   44136820
  13   5   4   6   44136820
  13   6   3   6   19315114
  13   7   2   6   3356522
  13   8   1   6   169884
  13   1   9   5   70785
  13   2   8   5   1827683
  13   3   7   5   14019928
  13   4   6   5   44136820
  13   5   5   5   64013222
  13   6   4   5   44136820
  13   7   3   5   14019928
  13   8   2   5   1827683
  13   9   1   5   70785
  13   1  10   4   15730
  13   2   9   4   525525
  13   3   8   4   5264545
  13   4   7   4   22089600
  13   5   6   4   44136820
  13   6   5   4   44136820
  13   7   4   4   22089600
  13   8   3   4   5264545
  13   9   2   4   525525
  13  10   1   4   15730
  13   1  11   3   1716
  13   2  10   3   74217
  13   3   9   3   960960
  13   4   8   3   5264545
  13   5   7   3   14019928
  13   6   6   3   19315114
  13   7   5   3   14019928
  13   8   4   3   5264545
  13   9   3   3   960960
  13  10   2   3   74217
  13  11   1   3   1716
  13   1  12   2   78
  13   2  11   2   4433
  13   3  10   2   74217
  13   4   9   2   525525
  13   5   8   2   1827683
  13   6   7   2   3356522
  13   7   6   2   3356522
  13   8   5   2   1827683
  13   9   4   2   525525
  13  10   3   2   74217
  13  11   2   2   4433
  13  12   1   2   78
  13   1  13   1   1
  13   2  12   1   78
  13   3  11   1   1716
  13   4  10   1   15730
  13   5   9   1   70785
  13   6   8   1   169884
  13   7   7   1   226512
  13   8   6   1   169884
  13   9   5   1   70785
  13  10   4   1   15730
  13  11   3   1   1716
  13  12   2   1   78
  13  13   1   1   1

  13         sum   608583680

  14   1   1  14   1
  14   1   2  13   91
  14   2   1  13   91
  14   1   3  12   2366
  14   2   2  12   6097
  14   3   1  12   2366
  14   1   4  11   26026
  14   2   3  11   122122
  14   3   2  11   122122
  14   4   1  11   26026
  14   1   5  10   143143
  14   2   4  10   1053052
  14   3   3  10   1919918
  14   4   2  10   1053052
  14   5   1  10   143143
  14   1   6   9   429429
  14   2   5   9   4557553
  14   3   4   9   13043030
  14   4   3   9   13043030
  14   5   2   9   4557553
  14   6   1   9   429429
  14   1   7   8   736164
  14   2   6   8   10701873
  14   3   5   8   44221632
  14   4   4   8   69432090
  14   5   3   8   44221632
  14   6   2   8   10701873
  14   7   1   8   736164
  14   1   8   7   736164
  14   2   7   7   14168988
  14   3   6   7   80231508
  14   4   5   7   181925268
  14   5   4   7   181925268
  14   6   3   7   80231508
  14   7   2   7   14168988
  14   8   1   7   736164
  14   1   9   6   429429
  14   2   8   6   10701873
  14   3   7   6   80231508
  14   4   6   6   249321114
  14   5   5   6   360078558
  14   6   4   6   249321114
  14   7   3   6   80231508
  14   8   2   6   10701873
  14   9   1   6   429429
  14   1  10   5   143143
  14   2   9   5   4557553
  14   3   8   5   44221632
  14   4   7   5   181925268
  14   5   6   5   360078558
  14   6   5   5   360078558
  14   7   4   5   181925268
  14   8   3   5   44221632
  14   9   2   5   4557553
  14  10   1   5   143143
  14   1  11   4   26026
  14   2  10   4   1053052
  14   3   9   4   13043030
  14   4   8   4   69432090
  14   5   7   4   181925268
  14   6   6   4   249321114
  14   7   5   4   181925268
  14   8   4   4   69432090
  14   9   3   4   13043030
  14  10   2   4   1053052
  14  11   1   4   26026
  14   1  12   3   2366
  14   2  11   3   122122
  14   3  10   3   1919918
  14   4   9   3   13043030
  14   5   8   3   44221632
  14   6   7   3   80231508
  14   7   6   3   80231508
  14   8   5   3   44221632
  14   9   4   3   13043030
  14  10   3   3   1919918
  14  11   2   3   122122
  14  12   1   3   2366
  14   1  13   2   91
  14   2  12   2   6097
  14   3  11   2   122122
  14   4  10   2   1053052
  14   5   9   2   4557553
  14   6   8   2   10701873
  14   7   7   2   14168988
  14   8   6   2   10701873
  14   9   5   2   4557553
  14  10   4   2   1053052
  14  11   3   2   122122
  14  12   2   2   6097
  14  13   1   2   91
  14   1  14   1   1
  14   2  13   1   91
  14   3  12   1   2366
  14   4  11   1   26026
  14   5  10   1   143143
  14   6   9   1   429429
  14   7   8   1   736164
  14   8   7   1   736164
  14   9   6   1   429429
  14  10   5   1   143143
  14  11   4   1   26026
  14  12   3   1   2366
  14  13   2   1   91
  14  14   1   1   1

  14         sum   4107939840

\end{verbatim}
\end{scriptsize}
\end{multicols}

\newpage
\subsection{Genus 1}
\begin{multicols}{3}
\begin{scriptsize}
\begin{verbatim}
   d   v   e   f   h
   3   1   1   1   1

   3         sum   1

   4   1   1   2   5
   4   1   2   1   5
   4   2   1   1   5

   4         sum   15

   5   1   1   3   15
   5   1   2   2   40
   5   2   1   2   40
   5   1   3   1   15
   5   2   2   1   40
   5   3   1   1   15

   5         sum   165

   6   1   1   4   35
   6   1   2   3   175
   6   2   1   3   175
   6   1   3   2   175
   6   2   2   2   456
   6   3   1   2   175
   6   1   4   1   35
   6   2   3   1   175
   6   3   2   1   175
   6   4   1   1   35

   6         sum   1611

   7   1   1   5   70
   7   1   2   4   560
   7   2   1   4   560
   7   1   3   3   1050
   7   2   2   3   2695
   7   3   1   3   1050
   7   1   4   2   560
   7   2   3   2   2695
   7   3   2   2   2695
   7   4   1   2   560
   7   1   5   1   70
   7   2   4   1   560
   7   3   3   1   1050
   7   4   2   1   560
   7   5   1   1   70

   7         sum   14805

   8   1   1   6   126
   8   1   2   5   1470
   8   2   1   5   1470
   8   1   3   4   4410
   8   2   2   4   11199
   8   3   1   4   4410
   8   1   4   3   4410
   8   2   3   3   20684
   8   3   2   3   20684
   8   4   1   3   4410
   8   1   5   2   1470
   8   2   4   2   11199
   8   3   3   2   20684
   8   4   2   2   11199
   8   5   1   2   1470
   8   1   6   1   126
   8   2   5   1   1470
   8   3   4   1   4410
   8   4   3   1   4410
   8   5   2   1   1470
   8   6   1   1   126

   8         sum   131307

   9   1   1   7   210
   9   1   2   6   3360
   9   2   1   6   3360
   9   1   3   5   14700
   9   2   2   5   37035
   9   3   1   5   14700
   9   1   4   4   23520
   9   2   3   4   108285
   9   3   2   4   108285
   9   4   1   4   23520
   9   1   5   3   14700
   9   2   4   3   108285
   9   3   3   3   197896
   9   4   2   3   108285
   9   5   1   3   14700
   9   1   6   2   3360
   9   2   5   2   37035
   9   3   4   2   108285
   9   4   3   2   108285
   9   5   2   2   37035
   9   6   1   2   3360
   9   1   7   1   210
   9   2   6   1   3360
   9   3   5   1   14700
   9   4   4   1   23520
   9   5   3   1   14700
   9   6   2   1   3360
   9   7   1   1   210

   9         sum   1138261

  10   1   1   8   330
  10   1   2   7   6930
  10   2   1   7   6930
  10   1   3   6   41580
  10   2   2   6   104115
  10   3   1   6   41580
  10   1   4   5   97020
  10   2   3   5   440440
  10   3   2   5   440440
  10   4   1   5   97020
  10   1   5   4   97020
  10   2   4   4   697250
  10   3   3   4   1264310
  10   4   2   4   697250
  10   5   1   4   97020
  10   1   6   3   41580
  10   2   5   3   440440
  10   3   4   3   1264310
  10   4   3   3   1264310
  10   5   2   3   440440
  10   6   1   3   41580
  10   1   7   2   6930
  10   2   6   2   104115
  10   3   5   2   440440
  10   4   4   2   697250
  10   5   3   2   440440
  10   6   2   2   104115
  10   7   1   2   6930
  10   1   8   1   330
  10   2   7   1   6930
  10   3   6   1   41580
  10   4   5   1   97020
  10   5   4   1   97020
  10   6   3   1   41580
  10   7   2   1   6930
  10   8   1   1   330

  10         sum   9713835

  11   1   1   9   495
  11   1   2   8   13200
  11   2   1   8   13200
  11   1   3   7   103950
  11   2   2   7   259017
  11   3   1   7   103950
  11   1   4   6   332640
  11   2   3   6   1493525
  11   3   2   6   1493525
  11   4   1   6   332640
  11   1   5   5   485100
  11   2   4   5   3420835
  11   3   3   5   6165478
  11   4   2   5   3420835
  11   5   1   5   485100
  11   1   6   4   332640
  11   2   5   4   3420835
  11   3   4   4   9684433
  11   4   3   4   9684433
  11   5   2   4   3420835
  11   6   1   4   332640
  11   1   7   3   103950
  11   2   6   3   1493525
  11   3   5   3   6165478
  11   4   4   3   9684433
  11   5   3   3   6165478
  11   6   2   3   1493525
  11   7   1   3   103950
  11   1   8   2   13200
  11   2   7   2   259017
  11   3   6   2   1493525
  11   4   5   2   3420835
  11   5   4   2   3420835
  11   6   3   2   1493525
  11   7   2   2   259017
  11   8   1   2   13200
  11   1   9   1   495
  11   2   8   1   13200
  11   3   7   1   103950
  11   4   6   1   332640
  11   5   5   1   485100
  11   6   4   1   332640
  11   7   3   1   103950
  11   8   2   1   13200
  11   9   1   1   495

  11         sum   81968469

  12   1   1  10   715
  12   1   2   9   23595
  12   2   1   9   23595
  12   1   3   8   235950
  12   2   2   8   585585
  12   3   1   8   235950
  12   1   4   7   990990
  12   2   3   7   4410120
  12   3   2   7   4410120
  12   4   1   7   990990
  12   1   5   6   1981980
  12   2   4   6   13768300
  12   3   3   6   24695580
  12   4   2   6   13768300
  12   5   1   6   1981980
  12   1   6   5   1981980
  12   2   5   5   19920390
  12   3   4   5   55785870
  12   4   3   5   55785870
  12   5   2   5   19920390
  12   6   1   5   1981980
  12   1   7   4   990990
  12   2   6   4   13768300
  12   3   5   4   55785870
  12   4   4   4   87100531
  12   5   3   4   55785870
  12   6   2   4   13768300
  12   7   1   4   990990
  12   1   8   3   235950
  12   2   7   3   4410120
  12   3   6   3   24695580
  12   4   5   3   55785870
  12   5   4   3   55785870
  12   6   3   3   24695580
  12   7   2   3   4410120
  12   8   1   3   235950
  12   1   9   2   23595
  12   2   8   2   585585
  12   3   7   2   4410120
  12   4   6   2   13768300
  12   5   5   2   19920390
  12   6   4   2   13768300
  12   7   3   2   4410120
  12   8   2   2   585585
  12   9   1   2   23595
  12   1  10   1   715
  12   2   9   1   23595
  12   3   8   1   235950
  12   4   7   1   990990
  12   5   6   1   1981980
  12   6   5   1   1981980
  12   7   4   1   990990
  12   8   3   1   235950
  12   9   2   1   23595
  12  10   1   1   715

  12         sum   685888171

  13   1   1  11   1001
  13   1   2  10   40040
  13   2   1  10   40040
  13   1   3   9   495495
  13   2   2   9   1225653
  13   3   1   9   495495
  13   1   4   8   2642640
  13   2   3   8   11674663
  13   3   2   8   11674663
  13   4   1   8   2642640
  13   1   5   7   6936930
  13   2   4   7   47604648
  13   3   3   7   85050784
  13   4   2   7   47604648
  13   5   1   7   6936930
  13   1   6   6   9513504
  13   2   5   6   93880696
  13   3   4   6   260619268
  13   4   3   6   260619268
  13   5   2   6   93880696
  13   6   1   6   9513504
  13   1   7   5   6936930
  13   2   6   5   93880696
  13   3   5   5   374805834
  13   4   4   5   582408775
  13   5   3   5   374805834
  13   6   2   5   93880696
  13   7   1   5   6936930
  13   1   8   4   2642640
  13   2   7   4   47604648
  13   3   6   4   260619268
  13   4   5   4   582408775
  13   5   4   4   582408775
  13   6   3   4   260619268
  13   7   2   4   47604648
  13   8   1   4   2642640
  13   1   9   3   495495
  13   2   8   3   11674663
  13   3   7   3   85050784
  13   4   6   3   260619268
  13   5   5   3   374805834
  13   6   4   3   260619268
  13   7   3   3   85050784
  13   8   2   3   11674663
  13   9   1   3   495495
  13   1  10   2   40040
  13   2   9   2   1225653
  13   3   8   2   11674663
  13   4   7   2   47604648
  13   5   6   2   93880696
  13   6   5   2   93880696
  13   7   4   2   47604648
  13   8   3   2   11674663
  13   9   2   2   1225653
  13  10   1   2   40040
  13   1  11   1   1001
  13   2  10   1   40040
  13   3   9   1   495495
  13   4   8   1   2642640
  13   5   7   1   6936930
  13   6   6   1   9513504
  13   7   5   1   6936930
  13   8   4   1   2642640
  13   9   3   1   495495
  13  10   2   1   40040
  13  11   1   1   1001

  13         sum   5702382933

  14   1   1  12   1365
  14   1   2  11   65065
  14   2   1  11   65065
  14   1   3  10   975975
  14   2   2  10   2407405
  14   3   1  10   975975
  14   1   4   9   6441435
  14   2   3   9   28283255
  14   3   2   9   28283255
  14   4   1   9   6441435
  14   1   5   8   21471450
  14   2   4   8   145864355
  14   3   3   8   259750218
  14   4   2   8   145864355
  14   5   1   8   21471450
  14   1   6   7   38648610
  14   2   5   7   375707570
  14   3   4   7   1035514340
  14   4   3   7   1035514340
  14   5   2   7   375707570
  14   6   1   7   38648610
  14   1   7   6   38648610
  14   2   6   6   512104880
  14   3   5   6   2020140430
  14   4   4   6   3126887407
  14   5   3   6   2020140430
  14   6   2   6   512104880
  14   7   1   6   38648610
  14   1   8   5   21471450
  14   2   7   5   375707570
  14   3   6   5   2020140430
  14   4   5   5   4475516612
  14   5   4   5   4475516612
  14   6   3   5   2020140430
  14   7   2   5   375707570
  14   8   1   5   21471450
  14   1   9   4   6441435
  14   2   8   4   145864355
  14   3   7   4   1035514340
  14   4   6   4   3126887407
  14   5   5   4   4475516612
  14   6   4   4   3126887407
  14   7   3   4   1035514340
  14   8   2   4   145864355
  14   9   1   4   6441435
  14   1  10   3   975975
  14   2   9   3   28283255
  14   3   8   3   259750218
  14   4   7   3   1035514340
  14   5   6   3   2020140430
  14   6   5   3   2020140430
  14   7   4   3   1035514340
  14   8   3   3   259750218
  14   9   2   3   28283255
  14  10   1   3   975975
  14   1  11   2   65065
  14   2  10   2   2407405
  14   3   9   2   28283255
  14   4   8   2   145864355
  14   5   7   2   375707570
  14   6   6   2   512104880
  14   7   5   2   375707570
  14   8   4   2   145864355
  14   9   3   2   28283255
  14  10   2   2   2407405
  14  11   1   2   65065
  14   1  12   1   1365
  14   2  11   1   65065
  14   3  10   1   975975
  14   4   9   1   6441435
  14   5   8   1   21471450
  14   6   7   1   38648610
  14   7   6   1   38648610
  14   8   5   1   21471450
  14   9   4   1   6441435
  14  10   3   1   975975
  14  11   2   1   65065
  14  12   1   1   1365

  14         sum   47168678571

\end{verbatim}
\end{scriptsize}
\end{multicols}

\newpage
\subsection{Genus 2}
\begin{multicols}{3}
\begin{scriptsize}
\begin{verbatim}
   d   v   e   f   h
   5   1   1   1   8

   5         sum   8

   6   1   1   2   84
   6   1   2   1   84
   6   2   1   1   84

   6         sum   252

   7   1   1   3   469
   7   1   2   2   1183
   7   2   1   2   1183
   7   1   3   1   469
   7   2   2   1   1183
   7   3   1   1   469

   7         sum   4956

   8   1   1   4   1869
   8   1   2   3   8526
   8   2   1   3   8526
   8   1   3   2   8526
   8   2   2   2   21229
   8   3   1   2   8526
   8   1   4   1   1869
   8   2   3   1   8526
   8   3   2   1   8526
   8   4   1   1   1869

   8         sum   77992

   9   1   1   5   5985
   9   1   2   4   42588
   9   2   1   4   42588
   9   1   3   3   77028
   9   2   2   3   189999
   9   3   1   3   77028
   9   1   4   2   42588
   9   2   3   2   189999
   9   3   2   2   189999
   9   4   1   2   42588
   9   1   5   1   5985
   9   2   4   1   42588
   9   3   3   1   77028
   9   4   2   1   42588
   9   5   1   1   5985

   9         sum   1074564

  10   1   1   6   16401
  10   1   2   5   167013
  10   2   1   5   167013
  10   1   3   4   471240
  10   2   2   4   1154095
  10   3   1   4   471240
  10   1   4   3   471240
  10   2   3   3   2068070
  10   3   2   3   2068070
  10   4   1   3   471240
  10   1   5   2   167013
  10   2   4   2   1154095
  10   3   3   2   2068070
  10   4   2   2   1154095
  10   5   1   2   167013
  10   1   6   1   16401
  10   2   5   1   167013
  10   3   4   1   471240
  10   4   3   1   471240
  10   5   2   1   167013
  10   6   1   1   16401

  10         sum   13545216

  11   1   1   7   39963
  11   1   2   6   550011
  11   2   1   6   550011
  11   1   3   5   2221065
  11   2   2   5   5409019
  11   3   1   5   2221065
  11   1   4   4   3465000
  11   2   3   4   15014846
  11   3   2   4   15014846
  11   4   1   4   3465000
  11   1   5   3   2221065
  11   2   4   3   15014846
  11   3   3   3   26717482
  11   4   2   3   15014846
  11   5   1   3   2221065
  11   1   6   2   550011
  11   2   5   2   5409019
  11   3   4   2   15014846
  11   4   3   2   15014846
  11   5   2   2   5409019
  11   6   1   2   550011
  11   1   7   1   39963
  11   2   6   1   550011
  11   3   5   1   2221065
  11   4   4   1   3465000
  11   5   3   1   2221065
  11   6   2   1   550011
  11   7   1   1   39963

  11         sum   160174960

  12   1   1   8   88803
  12   1   2   7   1585584
  12   2   1   7   1585584
  12   1   3   6   8654646
  12   2   2   6   20981337
  12   3   1   6   8654646
  12   1   4   5   19324305
  12   2   3   5   82897296
  12   3   2   5   82897296
  12   4   1   5   19324305
  12   1   5   4   19324305
  12   2   4   4   128420004
  12   3   3   4   227256510
  12   4   2   4   128420004
  12   5   1   4   19324305
  12   1   6   3   8654646
  12   2   5   3   82897296
  12   3   4   3   227256510
  12   4   3   3   227256510
  12   5   2   3   82897296
  12   6   1   3   8654646
  12   1   7   2   1585584
  12   2   6   2   20981337
  12   3   5   2   82897296
  12   4   4   2   128420004
  12   5   3   2   82897296
  12   6   2   2   20981337
  12   7   1   2   1585584
  12   1   8   1   88803
  12   2   7   1   1585584
  12   3   6   1   8654646
  12   4   5   1   19324305
  12   5   4   1   19324305
  12   6   3   1   8654646
  12   7   2   1   1585584
  12   8   1   1   88803

  12         sum   1805010948

  13   1   1   9   183183
  13   1   2   8   4114110
  13   2   1   8   4114110
  13   1   3   7   29135106
  13   2   2   7   70367479
  13   3   1   7   29135106
  13   1   4   6   87933846
  13   2   3   6   374127663
  13   3   2   6   374127663
  13   4   1   6   87933846
  13   1   5   5   125855730
  13   2   4   5   824962502
  13   3   3   5   1453414846
  13   4   2   5   824962502
  13   5   1   5   125855730
  13   1   6   4   87933846
  13   2   5   4   824962502
  13   3   4   4   2239280420
  13   4   3   4   2239280420
  13   5   2   4   824962502
  13   6   1   4   87933846
  13   1   7   3   29135106
  13   2   6   3   374127663
  13   3   5   3   1453414846
  13   4   4   3   2239280420
  13   5   3   3   1453414846
  13   6   2   3   374127663
  13   7   1   3   29135106
  13   1   8   2   4114110
  13   2   7   2   70367479
  13   3   6   2   374127663
  13   4   5   2   824962502
  13   5   4   2   824962502
  13   6   3   2   374127663
  13   7   2   2   70367479
  13   8   1   2   4114110
  13   1   9   1   183183
  13   2   8   1   4114110
  13   3   7   1   29135106
  13   4   6   1   87933846
  13   5   5   1   125855730
  13   6   4   1   87933846
  13   7   3   1   29135106
  13   8   2   1   4114110
  13   9   1   1   183183

  13         sum   19588944336

  14   1   1  10   355355
  14   1   2   9   9798789
  14   2   1   9   9798789
  14   1   3   8   87291204
  14   2   2   8   210164227
  14   3   1   8   87291204
  14   1   4   7   341825484
  14   2   3   7   1444432612
  14   3   2   7   1444432612
  14   4   1   7   341825484
  14   1   5   6   661320660
  14   2   4   6   4286172247
  14   3   3   6   7523770016
  14   4   2   6   4286172247
  14   5   1   6   661320660
  14   1   6   5   661320660
  14   2   5   5   6100939726
  14   3   4   5   16427471172
  14   4   3   5   16427471172
  14   5   2   5   6100939726
  14   6   1   5   661320660
  14   1   7   4   341825484
  14   2   6   4   4286172247
  14   3   5   4   16427471172
  14   4   4   4   25199010256
  14   5   3   4   16427471172
  14   6   2   4   4286172247
  14   7   1   4   341825484
  14   1   8   3   87291204
  14   2   7   3   1444432612
  14   3   6   3   7523770016
  14   4   5   3   16427471172
  14   5   4   3   16427471172
  14   6   3   3   7523770016
  14   7   2   3   1444432612
  14   8   1   3   87291204
  14   1   9   2   9798789
  14   2   8   2   210164227
  14   3   7   2   1444432612
  14   4   6   2   4286172247
  14   5   5   2   6100939726
  14   6   4   2   4286172247
  14   7   3   2   1444432612
  14   8   2   2   210164227
  14   9   1   2   9798789
  14   1  10   1   355355
  14   2   9   1   9798789
  14   3   8   1   87291204
  14   4   7   1   341825484
  14   5   6   1   661320660
  14   6   5   1   661320660
  14   7   4   1   341825484
  14   8   3   1   87291204
  14   9   2   1   9798789
  14  10   1   1   355355

  14         sum   206254571236

\end{verbatim}
\end{scriptsize}
\end{multicols}

\subsection{Genus 3}
\begin{multicols}{3}
\begin{scriptsize}
\begin{verbatim}
   d   v   e   f   h
   7   1   1   1   180

   7         sum   180

   8   1   1   2   3044
   8   1   2   1   3044
   8   2   1   1   3044

   8         sum   9132

   9   1   1   3   26060
   9   1   2   2   63600
   9   2   1   2   63600
   9   1   3   1   26060
   9   2   2   1   63600
   9   3   1   1   26060

   9         sum   268980

  10   1   1   4   152900
  10   1   2   3   659340
  10   2   1   3   659340
  10   1   3   2   659340
  10   2   2   2   1595480
  10   3   1   2   659340
  10   1   4   1   152900
  10   2   3   1   659340
  10   3   2   1   659340
  10   4   1   1   152900

  10         sum   6010220

  11   1   1   5   696905
  11   1   2   4   4606910
  11   2   1   4   4606910
  11   1   3   3   8141100
  11   2   2   3   19571123
  11   3   1   3   8141100
  11   1   4   2   4606910
  11   2   3   2   19571123
  11   3   2   2   19571123
  11   4   1   2   4606910
  11   1   5   1   696905
  11   2   4   1   4606910
  11   3   3   1   8141100
  11   4   2   1   4606910
  11   5   1   1   696905

  11         sum   112868844

  12   1   1   6   2641925
  12   1   2   5   24656775
  12   2   1   5   24656775
  12   1   3   4   66805310
  12   2   2   4   159762815
  12   3   1   4   66805310
  12   1   4   3   66805310
  12   2   3   3   280514670
  12   3   2   3   280514670
  12   4   1   3   66805310
  12   1   5   2   24656775
  12   2   4   2   159762815
  12   3   3   2   280514670
  12   4   2   2   159762815
  12   5   1   2   24656775
  12   1   6   1   2641925
  12   2   5   1   24656775
  12   3   4   1   66805310
  12   4   3   1   66805310
  12   5   2   1   24656775
  12   6   1   1   2641925

  12         sum   1877530740

  13   1   1   7   8691683
  13   1   2   6   108452916
  13   2   1   6   108452916
  13   1   3   5   414918075
  13   2   2   5   988043771
  13   3   1   5   414918075
  13   1   4   4   636184120
  13   2   3   4   2646424729
  13   3   2   4   2646424729
  13   4   1   4   636184120
  13   1   5   3   414918075
  13   2   4   3   2646424729
  13   3   3   3   4623070842
  13   4   2   3   2646424729
  13   5   1   3   414918075
  13   1   6   2   108452916
  13   2   5   2   988043771
  13   3   4   2   2646424729
  13   4   3   2   2646424729
  13   5   2   2   988043771
  13   6   1   2   108452916
  13   1   7   1   8691683
  13   2   6   1   108452916
  13   3   5   1   414918075
  13   4   4   1   636184120
  13   5   3   1   414918075
  13   6   2   1   108452916
  13   7   1   1   8691683

  13         sum   28540603884

  14   1   1   8   25537655
  14   1   2   7   409732895
  14   2   1   7   409732895
  14   1   3   6   2096068975
  14   2   2   6   4973691275
  14   3   1   6   2096068975
  14   1   4   5   4538348815
  14   2   3   5   18733893115
  14   3   2   5   18733893115
  14   4   1   5   4538348815
  14   1   5   4   4538348815
  14   2   4   4   28579309570
  14   3   3   4   49719495672
  14   4   2   4   28579309570
  14   5   1   4   4538348815
  14   1   6   3   2096068975
  14   2   5   3   18733893115
  14   3   4   3   49719495672
  14   4   3   3   49719495672
  14   5   2   3   18733893115
  14   6   1   3   2096068975
  14   1   7   2   409732895
  14   2   6   2   4973691275
  14   3   5   2   18733893115
  14   4   4   2   28579309570
  14   5   3   2   18733893115
  14   6   2   2   4973691275
  14   7   1   2   409732895
  14   1   8   1   25537655
  14   2   7   1   409732895
  14   3   6   1   2096068975
  14   4   5   1   4538348815
  14   5   4   1   4538348815
  14   6   3   1   2096068975
  14   7   2   1   409732895
  14   8   1   1   25537655

  14         sum   404562365316

\end{verbatim}
\end{scriptsize}
\end{multicols}

\subsection{Genus 4}
\begin{multicols}{3}
\begin{scriptsize}
\begin{verbatim}
   d   v   e   f   h
   9   1   1   1   8064

   9         sum   8064

  10   1   1   2   193248
  10   1   2   1   193248
  10   2   1   1   193248

  10         sum   579744

  11   1   1   3   2286636
  11   1   2   2   5458464
  11   2   1   2   5458464
  11   1   3   1   2286636
  11   2   2   1   5458464
  11   3   1   1   2286636

  11         sum   23235300

  12   1   1   4   18128396
  12   1   2   3   75220860
  12   2   1   3   75220860
  12   1   3   2   75220860
  12   2   2   2   178462816
  12   3   1   2   75220860
  12   1   4   1   18128396
  12   2   3   1   75220860
  12   3   2   1   75220860
  12   4   1   1   18128396

  12         sum   684173164

  13   1   1   5   109425316
  13   1   2   4   687238552
  13   2   1   4   687238552
  13   1   3   3   1194737544
  13   2   2   3   2820651496
  13   3   1   3   1194737544
  13   1   4   2   687238552
  13   2   3   2   2820651496
  13   3   2   2   2820651496
  13   4   1   2   687238552
  13   1   5   1   109425316
  13   2   4   1   687238552
  13   3   3   1   1194737544
  13   4   2   1   687238552
  13   5   1   1   109425316

  13         sum   16497874380

  14   1   1   6   539651112
  14   1   2   5   4736419688
  14   2   1   5   4736419688
  14   1   3   4   12465308856
  14   2   2   4   29310854804
  14   3   1   4   12465308856
  14   1   4   3   12465308856
  14   2   3   3   50713072144
  14   3   2   3   50713072144
  14   4   1   3   12465308856
  14   1   5   2   4736419688
  14   2   4   2   29310854804
  14   3   3   2   50713072144
  14   4   2   2   29310854804
  14   5   1   2   4736419688
  14   1   6   1   539651112
  14   2   5   1   4736419688
  14   3   4   1   12465308856
  14   4   3   1   12465308856
  14   5   2   1   4736419688
  14   6   1   1   539651112

  14         sum   344901105444

\end{verbatim}
\end{scriptsize}
\end{multicols}

\subsection{Genus 5}
\begin{multicols}{3}
\begin{scriptsize}
\begin{verbatim}
   d   v   e   f   h
  11   1   1   1   604800

  11         sum   604800

  12   1   1   2   19056960
  12   1   2   1   19056960
  12   2   1   1   19056960

  12         sum   57170880

  13   1   1   3   292271616
  13   1   2   2   686597184
  13   2   1   2   686597184
  13   1   3   1   292271616
  13   2   2   1   686597184
  13   3   1   1   292271616

  13         sum   2936606400

  14   1   1   4   2961802480
  14   1   2   3   11947069680
  14   2   1   3   11947069680
  14   1   3   2   11947069680
  14   2   2   2   27934773440
  14   3   1   2   11947069680
  14   1   4   1   2961802480
  14   2   3   1   11947069680
  14   3   2   1   11947069680
  14   4   1   1   2961802480

  14         sum   108502598960

\end{verbatim}
\end{scriptsize}
\end{multicols}

\subsection{Genus 6}
\begin{multicols}{3}
\begin{scriptsize}
\begin{verbatim}
   d   v   e   f   h
  13   1   1   1   68428800

  13         sum   68428800

  14   1   1   2   2699672832
  14   1   2   1   2699672832
  14   2   1   1   2699672832

  14         sum   8099018496

\end{verbatim}
\end{scriptsize}
\end{multicols}

These tables extend to 14 darts the part of Appendix B of~\cite{Walsh15} about
rooted hypermaps. 

\newpage
\section{First numbers of unrooted hypermaps}
\label{unrooted:appendix}

The following sections show the numbers \texttt{H} of unrooted hypermaps of
genus $g$ with $d$ darts, $v$ vertices, $e$ edges and $d - v - e + 2(1 – g)$ faces, 
for $g \leq 6$ and $d \leq 14$.

\subsection{Genus 0}
\begin{multicols}{3}
\begin{scriptsize}
\begin{verbatim}
   d   v   e   f   H
   1   1   1   1   1

   1         sum   1

   2   1   1   2   1
   2   1   2   1   1
   2   2   1   1   1

   2         sum   3

   3   1   1   3   1
   3   1   2   2   1
   3   2   1   2   1
   3   1   3   1   1
   3   2   2   1   1
   3   3   1   1   1

   3         sum   6

   4   1   1   4   1
   4   1   2   3   2
   4   2   1   3   2
   4   1   3   2   2
   4   2   2   2   5
   4   3   1   2   2
   4   1   4   1   1
   4   2   3   1   2
   4   3   2   1   2
   4   4   1   1   1

   4         sum   20

   5   1   1   5   1
   5   1   2   4   2
   5   2   1   4   2
   5   1   3   3   4
   5   2   2   3   11
   5   3   1   3   4
   5   1   4   2   2
   5   2   3   2   11
   5   3   2   2   11
   5   4   1   2   2
   5   1   5   1   1
   5   2   4   1   2
   5   3   3   1   4
   5   4   2   1   2
   5   5   1   1   1

   5         sum   60

   6   1   1   6   1
   6   1   2   5   3
   6   2   1   5   3
   6   1   3   4   10
   6   2   2   4   24
   6   3   1   4   10
   6   1   4   3   10
   6   2   3   3   46
   6   3   2   3   46
   6   4   1   3   10
   6   1   5   2   3
   6   2   4   2   24
   6   3   3   2   46
   6   4   2   2   24
   6   5   1   2   3
   6   1   6   1   1
   6   2   5   1   3
   6   3   4   1   10
   6   4   3   1   10
   6   5   2   1   3
   6   6   1   1   1

   6         sum   291

   7   1   1   7   1
   7   1   2   6   3
   7   2   1   6   3
   7   1   3   5   15
   7   2   2   5   40
   7   3   1   5   15
   7   1   4   4   25
   7   2   3   4   127
   7   3   2   4   127
   7   4   1   4   25
   7   1   5   3   15
   7   2   4   3   127
   7   3   3   3   242
   7   4   2   3   127
   7   5   1   3   15
   7   1   6   2   3
   7   2   5   2   40
   7   3   4   2   127
   7   4   3   2   127
   7   5   2   2   40
   7   6   1   2   3
   7   1   7   1   1
   7   2   6   1   3
   7   3   5   1   15
   7   4   4   1   25
   7   5   3   1   15
   7   6   2   1   3
   7   7   1   1   1

   7         sum   1310

   8   1   1   8   1
   8   1   2   7   4
   8   2   1   7   4
   8   1   3   6   26
   8   2   2   6   67
   8   3   1   6   26
   8   1   4   5   64
   8   2   3   5   309
   8   3   2   5   309
   8   4   1   5   64
   8   1   5   4   64
   8   2   4   4   505
   8   3   3   4   946
   8   4   2   4   505
   8   5   1   4   64
   8   1   6   3   26
   8   2   5   3   309
   8   3   4   3   946
   8   4   3   3   946
   8   5   2   3   309
   8   6   1   3   26
   8   1   7   2   4
   8   2   6   2   67
   8   3   5   2   309
   8   4   4   2   505
   8   5   3   2   309
   8   6   2   2   67
   8   7   1   2   4
   8   1   8   1   1
   8   2   7   1   4
   8   3   6   1   26
   8   4   5   1   64
   8   5   4   1   64
   8   6   3   1   26
   8   7   2   1   4
   8   8   1   1   1

   8         sum   6975

   9   1   1   9   1
   9   1   2   8   4
   9   2   1   8   4
   9   1   3   7   38
   9   2   2   7   98
   9   3   1   7   38
   9   1   4   6   132
   9   2   3   6   640
   9   3   2   6   640
   9   4   1   6   132
   9   1   5   5   196
   9   2   4   5   1549
   9   3   3   5   2890
   9   4   2   5   1549
   9   5   1   5   196
   9   1   6   4   132
   9   2   5   4   1549
   9   3   4   4   4671
   9   4   3   4   4671
   9   5   2   4   1549
   9   6   1   4   132
   9   1   7   3   38
   9   2   6   3   640
   9   3   5   3   2890
   9   4   4   3   4671
   9   5   3   3   2890
   9   6   2   3   640
   9   7   1   3   38
   9   1   8   2   4
   9   2   7   2   98
   9   3   6   2   640
   9   4   5   2   1549
   9   5   4   2   1549
   9   6   3   2   640
   9   7   2   2   98
   9   8   1   2   4
   9   1   9   1   1
   9   2   8   1   4
   9   3   7   1   38
   9   4   6   1   132
   9   5   5   1   196
   9   6   4   1   132
   9   7   3   1   38
   9   8   2   1   4
   9   9   1   1   1

   9         sum   37746

  10   1   1  10   1
  10   1   2   9   5
  10   2   1   9   5
  10   1   3   8   56
  10   2   2   8   144
  10   3   1   8   56
  10   1   4   7   256
  10   2   3   7   1226
  10   3   2   7   1226
  10   4   1   7   256
  10   1   5   6   536
  10   2   4   6   4111
  10   3   3   6   7606
  10   4   2   6   4111
  10   5   1   6   536
  10   1   6   5   536
  10   2   5   5   6081
  10   3   4   5   18019
  10   4   3   5   18019
  10   5   2   5   6081
  10   6   1   5   536
  10   1   7   4   256
  10   2   6   4   4111
  10   3   5   4   18019
  10   4   4   4   28852
  10   5   3   4   18019
  10   6   2   4   4111
  10   7   1   4   256
  10   1   8   3   56
  10   2   7   3   1226
  10   3   6   3   7606
  10   4   5   3   18019
  10   5   4   3   18019
  10   6   3   3   7606
  10   7   2   3   1226
  10   8   1   3   56
  10   1   9   2   5
  10   2   8   2   144
  10   3   7   2   1226
  10   4   6   2   4111
  10   5   5   2   6081
  10   6   4   2   4111
  10   7   3   2   1226
  10   8   2   2   144
  10   9   1   2   5
  10   1  10   1   1
  10   2   9   1   5
  10   3   8   1   56
  10   4   7   1   256
  10   5   6   1   536
  10   6   5   1   536
  10   7   4   1   256
  10   8   3   1   56
  10   9   2   1   5
  10  10   1   1   1

  10         sum   215602

  11   1   1  11   1
  11   1   2  10   5
  11   2   1  10   5
  11   1   3   9   75
  11   2   2   9   195
  11   3   1   9   75
  11   1   4   8   450
  11   2   3   8   2154
  11   3   2   8   2154
  11   4   1   8   450
  11   1   5   7   1260
  11   2   4   7   9585
  11   3   3   7   17664
  11   4   2   7   9585
  11   5   1   7   1260
  11   1   6   6   1764
  11   2   5   6   19691
  11   3   4   6   57715
  11   4   3   6   57715
  11   5   2   6   19691
  11   6   1   6   1764
  11   1   7   5   1260
  11   2   6   5   19691
  11   3   5   5   84714
  11   4   4   5   134750
  11   5   3   5   84714
  11   6   2   5   19691
  11   7   1   5   1260
  11   1   8   4   450
  11   2   7   4   9585
  11   3   6   4   57715
  11   4   5   4   134750
  11   5   4   4   134750
  11   6   3   4   57715
  11   7   2   4   9585
  11   8   1   4   450
  11   1   9   3   75
  11   2   8   3   2154
  11   3   7   3   17664
  11   4   6   3   57715
  11   5   5   3   84714
  11   6   4   3   57715
  11   7   3   3   17664
  11   8   2   3   2154
  11   9   1   3   75
  11   1  10   2   5
  11   2   9   2   195
  11   3   8   2   2154
  11   4   7   2   9585
  11   5   6   2   19691
  11   6   5   2   19691
  11   7   4   2   9585
  11   8   3   2   2154
  11   9   2   2   195
  11  10   1   2   5
  11   1  11   1   1
  11   2  10   1   5
  11   3   9   1   75
  11   4   8   1   450
  11   5   7   1   1260
  11   6   6   1   1764
  11   7   5   1   1260
  11   8   4   1   450
  11   9   3   1   75
  11  10   2   1   5
  11  11   1   1   1

  11         sum   1262874

  12   1   1  12   1
  12   1   2  11   6
  12   2   1  11   6
  12   1   3  10   104
  12   2   2  10   265
  12   3   1  10   104
  12   1   4   9   765
  12   2   3   9   3605
  12   3   2   9   3605
  12   4   1   9   765
  12   1   5   8   2736
  12   2   4   8   20472
  12   3   3   8   37545
  12   4   2   8   20472
  12   5   1   8   2736
  12   1   6   7   5102
  12   2   5   7   55633
  12   3   4   7   161455
  12   4   3   7   161455
  12   5   2   7   55633
  12   6   1   7   5102
  12   1   7   6   5102
  12   2   6   6   77174
  12   3   5   6   326432
  12   4   4   6   516507
  12   5   3   6   326432
  12   6   2   6   77174
  12   7   1   6   5102
  12   1   8   5   2736
  12   2   7   5   55633
  12   3   6   5   326432
  12   4   5   5   752940
  12   5   4   5   752940
  12   6   3   5   326432
  12   7   2   5   55633
  12   8   1   5   2736
  12   1   9   4   765
  12   2   8   4   20472
  12   3   7   4   161455
  12   4   6   4   516507
  12   5   5   4   752940
  12   6   4   4   516507
  12   7   3   4   161455
  12   8   2   4   20472
  12   9   1   4   765
  12   1  10   3   104
  12   2   9   3   3605
  12   3   8   3   37545
  12   4   7   3   161455
  12   5   6   3   326432
  12   6   5   3   326432
  12   7   4   3   161455
  12   8   3   3   37545
  12   9   2   3   3605
  12  10   1   3   104
  12   1  11   2   6
  12   2  10   2   265
  12   3   9   2   3605
  12   4   8   2   20472
  12   5   7   2   55633
  12   6   6   2   77174
  12   7   5   2   55633
  12   8   4   2   20472
  12   9   3   2   3605
  12  10   2   2   265
  12  11   1   2   6
  12   1  12   1   1
  12   2  11   1   6
  12   3  10   1   104
  12   4   9   1   765
  12   5   8   1   2736
  12   6   7   1   5102
  12   7   6   1   5102
  12   8   5   1   2736
  12   9   4   1   765
  12  10   3   1   104
  12  11   2   1   6
  12  12   1   1   1

  12         sum   7611156

  13   1   1  13   1
  13   1   2  12   6
  13   2   1  12   6
  13   1   3  11   132
  13   2   2  11   341
  13   3   1  11   132
  13   1   4  10   1210
  13   2   3  10   5709
  13   3   2  10   5709
  13   4   1  10   1210
  13   1   5   9   5445
  13   2   4   9   40425
  13   3   3   9   73920
  13   4   2   9   40425
  13   5   1   9   5445
  13   1   6   8   13068
  13   2   5   8   140591
  13   3   4   8   404965
  13   4   3   8   404965
  13   5   2   8   140591
  13   6   1   8   13068
  13   1   7   7   17424
  13   2   6   7   258194
  13   3   5   7   1078456
  13   4   4   7   1699200
  13   5   3   7   1078456
  13   6   2   7   258194
  13   7   1   7   17424
  13   1   8   6   13068
  13   2   7   6   258194
  13   3   6   6   1485778
  13   4   5   6   3395140
  13   5   4   6   3395140
  13   6   3   6   1485778
  13   7   2   6   258194
  13   8   1   6   13068
  13   1   9   5   5445
  13   2   8   5   140591
  13   3   7   5   1078456
  13   4   6   5   3395140
  13   5   5   5   4924094
  13   6   4   5   3395140
  13   7   3   5   1078456
  13   8   2   5   140591
  13   9   1   5   5445
  13   1  10   4   1210
  13   2   9   4   40425
  13   3   8   4   404965
  13   4   7   4   1699200
  13   5   6   4   3395140
  13   6   5   4   3395140
  13   7   4   4   1699200
  13   8   3   4   404965
  13   9   2   4   40425
  13  10   1   4   1210
  13   1  11   3   132
  13   2  10   3   5709
  13   3   9   3   73920
  13   4   8   3   404965
  13   5   7   3   1078456
  13   6   6   3   1485778
  13   7   5   3   1078456
  13   8   4   3   404965
  13   9   3   3   73920
  13  10   2   3   5709
  13  11   1   3   132
  13   1  12   2   6
  13   2  11   2   341
  13   3  10   2   5709
  13   4   9   2   40425
  13   5   8   2   140591
  13   6   7   2   258194
  13   7   6   2   258194
  13   8   5   2   140591
  13   9   4   2   40425
  13  10   3   2   5709
  13  11   2   2   341
  13  12   1   2   6
  13   1  13   1   1
  13   2  12   1   6
  13   3  11   1   132
  13   4  10   1   1210
  13   5   9   1   5445
  13   6   8   1   13068
  13   7   7   1   17424
  13   8   6   1   13068
  13   9   5   1   5445
  13  10   4   1   1210
  13  11   3   1   132
  13  12   2   1   6
  13  13   1   1   1

  13         sum   46814132

  14   1   1  14   1
  14   1   2  13   7
  14   2   1  13   7
  14   1   3  12   172
  14   2   2  12   440
  14   3   1  12   172
  14   1   4  11   1868
  14   2   3  11   8741
  14   3   2  11   8741
  14   4   1  11   1868
  14   1   5  10   10247
  14   2   4  10   75283
  14   3   3  10   137217
  14   4   2  10   75283
  14   5   1  10   10247
  14   1   6   9   30711
  14   2   5   9   325652
  14   3   4   9   931845
  14   4   3   9   931845
  14   5   2   9   325652
  14   6   1   9   30711
  14   1   7   8   52634
  14   2   6   8   764633
  14   3   5   8   3159069
  14   4   4   8   4960016
  14   5   3   8   3159069
  14   6   2   8   764633
  14   7   1   8   52634
  14   1   8   7   52634
  14   2   7   7   1012271
  14   3   6   7   5731330
  14   4   5   7   12995424
  14   5   4   7   12995424
  14   6   3   7   5731330
  14   7   2   7   1012271
  14   8   1   7   52634
  14   1   9   6   30711
  14   2   8   6   764633
  14   3   7   6   5731330
  14   4   6   6   17809776
  14   5   5   6   25720986
  14   6   4   6   17809776
  14   7   3   6   5731330
  14   8   2   6   764633
  14   9   1   6   30711
  14   1  10   5   10247
  14   2   9   5   325652
  14   3   8   5   3159069
  14   4   7   5   12995424
  14   5   6   5   25720986
  14   6   5   5   25720986
  14   7   4   5   12995424
  14   8   3   5   3159069
  14   9   2   5   325652
  14  10   1   5   10247
  14   1  11   4   1868
  14   2  10   4   75283
  14   3   9   4   931845
  14   4   8   4   4960016
  14   5   7   4   12995424
  14   6   6   4   17809776
  14   7   5   4   12995424
  14   8   4   4   4960016
  14   9   3   4   931845
  14  10   2   4   75283
  14  11   1   4   1868
  14   1  12   3   172
  14   2  11   3   8741
  14   3  10   3   137217
  14   4   9   3   931845
  14   5   8   3   3159069
  14   6   7   3   5731330
  14   7   6   3   5731330
  14   8   5   3   3159069
  14   9   4   3   931845
  14  10   3   3   137217
  14  11   2   3   8741
  14  12   1   3   172
  14   1  13   2   7
  14   2  12   2   440
  14   3  11   2   8741
  14   4  10   2   75283
  14   5   9   2   325652
  14   6   8   2   764633
  14   7   7   2   1012271
  14   8   6   2   764633
  14   9   5   2   325652
  14  10   4   2   75283
  14  11   3   2   8741
  14  12   2   2   440
  14  13   1   2   7
  14   1  14   1   1
  14   2  13   1   7
  14   3  12   1   172
  14   4  11   1   1868
  14   5  10   1   10247
  14   6   9   1   30711
  14   7   8   1   52634
  14   8   7   1   52634
  14   9   6   1   30711
  14  10   5   1   10247
  14  11   4   1   1868
  14  12   3   1   172
  14  13   2   1   7
  14  14   1   1   1

  14         sum   293447817
\end{verbatim}
\end{scriptsize}
\end{multicols}

\newpage
\subsection{Genus 1}
\begin{multicols}{3}
\begin{scriptsize}
\begin{verbatim}
   d   v   e   f   H
   3   1   1   1   1

   3         sum   1

   4   1   1   2   2
   4   1   2   1   2
   4   2   1   1   2

   4         sum   6

   5   1   1   3   3
   5   1   2   2   8
   5   2   1   2   8
   5   1   3   1   3
   5   2   2   1   8
   5   3   1   1   3

   5         sum   33

   6   1   1   4   7
   6   1   2   3   31
   6   2   1   3   31
   6   1   3   2   31
   6   2   2   2   78
   6   3   1   2   31
   6   1   4   1   7
   6   2   3   1   31
   6   3   2   1   31
   6   4   1   1   7

   6         sum   285

   7   1   1   5   10
   7   1   2   4   80
   7   2   1   4   80
   7   1   3   3   150
   7   2   2   3   385
   7   3   1   3   150
   7   1   4   2   80
   7   2   3   2   385
   7   3   2   2   385
   7   4   1   2   80
   7   1   5   1   10
   7   2   4   1   80
   7   3   3   1   150
   7   4   2   1   80
   7   5   1   1   10

   7         sum   2115

   8   1   1   6   17
   8   1   2   5   187
   8   2   1   5   187
   8   1   3   4   557
   8   2   2   4   1409
   8   3   1   4   557
   8   1   4   3   557
   8   2   3   3   2597
   8   3   2   3   2597
   8   4   1   3   557
   8   1   5   2   187
   8   2   4   2   1409
   8   3   3   2   2597
   8   4   2   2   1409
   8   5   1   2   187
   8   1   6   1   17
   8   2   5   1   187
   8   3   4   1   557
   8   4   3   1   557
   8   5   2   1   187
   8   6   1   1   17

   8         sum   16533

   9   1   1   7   24
   9   1   2   6   374
   9   2   1   6   374
   9   1   3   5   1634
   9   2   2   5   4115
   9   3   1   5   1634
   9   1   4   4   2616
   9   2   3   4   12033
   9   3   2   4   12033
   9   4   1   4   2616
   9   1   5   3   1634
   9   2   4   3   12033
   9   3   3   3   21990
   9   4   2   3   12033
   9   5   1   3   1634
   9   1   6   2   374
   9   2   5   2   4115
   9   3   4   2   12033
   9   4   3   2   12033
   9   5   2   2   4115
   9   6   1   2   374
   9   1   7   1   24
   9   2   6   1   374
   9   3   5   1   1634
   9   4   4   1   2616
   9   5   3   1   1634
   9   6   2   1   374
   9   7   1   1   24

   9         sum   126501

  10   1   1   8   34
  10   1   2   7   698
  10   2   1   7   698
  10   1   3   6   4172
  10   2   2   6   10434
  10   3   1   6   4172
  10   1   4   5   9724
  10   2   3   5   44091
  10   3   2   5   44091
  10   4   1   5   9724
  10   1   5   4   9724
  10   2   4   4   69790
  10   3   3   4   126519
  10   4   2   4   69790
  10   5   1   4   9724
  10   1   6   3   4172
  10   2   5   3   44091
  10   3   4   3   126519
  10   4   3   3   126519
  10   5   2   3   44091
  10   6   1   3   4172
  10   1   7   2   698
  10   2   6   2   10434
  10   3   5   2   44091
  10   4   4   2   69790
  10   5   3   2   44091
  10   6   2   2   10434
  10   7   1   2   698
  10   1   8   1   34
  10   2   7   1   698
  10   3   6   1   4172
  10   4   5   1   9724
  10   5   4   1   9724
  10   6   3   1   4172
  10   7   2   1   698
  10   8   1   1   34

  10         sum   972441

  11   1   1   9   45
  11   1   2   8   1200
  11   2   1   8   1200
  11   1   3   7   9450
  11   2   2   7   23547
  11   3   1   7   9450
  11   1   4   6   30240
  11   2   3   6   135775
  11   3   2   6   135775
  11   4   1   6   30240
  11   1   5   5   44100
  11   2   4   5   310985
  11   3   3   5   560498
  11   4   2   5   310985
  11   5   1   5   44100
  11   1   6   4   30240
  11   2   5   4   310985
  11   3   4   4   880403
  11   4   3   4   880403
  11   5   2   4   310985
  11   6   1   4   30240
  11   1   7   3   9450
  11   2   6   3   135775
  11   3   5   3   560498
  11   4   4   3   880403
  11   5   3   3   560498
  11   6   2   3   135775
  11   7   1   3   9450
  11   1   8   2   1200
  11   2   7   2   23547
  11   3   6   2   135775
  11   4   5   2   310985
  11   5   4   2   310985
  11   6   3   2   135775
  11   7   2   2   23547
  11   8   1   2   1200
  11   1   9   1   45
  11   2   8   1   1200
  11   3   7   1   9450
  11   4   6   1   30240
  11   5   5   1   44100
  11   6   4   1   30240
  11   7   3   1   9450
  11   8   2   1   1200
  11   9   1   1   45

  11         sum   7451679

  12   1   1  10   62
  12   1   2   9   1976
  12   2   1   9   1976
  12   1   3   8   19694
  12   2   2   8   48846
  12   3   1   8   19694
  12   1   4   7   82652
  12   2   3   7   367645
  12   3   2   7   367645
  12   4   1   7   82652
  12   1   5   6   165262
  12   2   4   6   1147628
  12   3   3   6   2058329
  12   4   2   6   1147628
  12   5   1   6   165262
  12   1   6   5   165262
  12   2   5   5   1660331
  12   3   4   5   4649379
  12   4   3   5   4649379
  12   5   2   5   1660331
  12   6   1   5   165262
  12   1   7   4   82652
  12   2   6   4   1147628
  12   3   5   4   4649379
  12   4   4   4   7259140
  12   5   3   4   4649379
  12   6   2   4   1147628
  12   7   1   4   82652
  12   1   8   3   19694
  12   2   7   3   367645
  12   3   6   3   2058329
  12   4   5   3   4649379
  12   5   4   3   4649379
  12   6   3   3   2058329
  12   7   2   3   367645
  12   8   1   3   19694
  12   1   9   2   1976
  12   2   8   2   48846
  12   3   7   2   367645
  12   4   6   2   1147628
  12   5   5   2   1660331
  12   6   4   2   1147628
  12   7   3   2   367645
  12   8   2   2   48846
  12   9   1   2   1976
  12   1  10   1   62
  12   2   9   1   1976
  12   3   8   1   19694
  12   4   7   1   82652
  12   5   6   1   165262
  12   6   5   1   165262
  12   7   4   1   82652
  12   8   3   1   19694
  12   9   2   1   1976
  12  10   1   1   62

  12         sum   57167260

  13   1   1  11   77
  13   1   2  10   3080
  13   2   1  10   3080
  13   1   3   9   38115
  13   2   2   9   94281
  13   3   1   9   38115
  13   1   4   8   203280
  13   2   3   8   898051
  13   3   2   8   898051
  13   4   1   8   203280
  13   1   5   7   533610
  13   2   4   7   3661896
  13   3   3   7   6542368
  13   4   2   7   3661896
  13   5   1   7   533610
  13   1   6   6   731808
  13   2   5   6   7221592
  13   3   4   6   20047636
  13   4   3   6   20047636
  13   5   2   6   7221592
  13   6   1   6   731808
  13   1   7   5   533610
  13   2   6   5   7221592
  13   3   5   5   28831218
  13   4   4   5   44800675
  13   5   3   5   28831218
  13   6   2   5   7221592
  13   7   1   5   533610
  13   1   8   4   203280
  13   2   7   4   3661896
  13   3   6   4   20047636
  13   4   5   4   44800675
  13   5   4   4   44800675
  13   6   3   4   20047636
  13   7   2   4   3661896
  13   8   1   4   203280
  13   1   9   3   38115
  13   2   8   3   898051
  13   3   7   3   6542368
  13   4   6   3   20047636
  13   5   5   3   28831218
  13   6   4   3   20047636
  13   7   3   3   6542368
  13   8   2   3   898051
  13   9   1   3   38115
  13   1  10   2   3080
  13   2   9   2   94281
  13   3   8   2   898051
  13   4   7   2   3661896
  13   5   6   2   7221592
  13   6   5   2   7221592
  13   7   4   2   3661896
  13   8   3   2   898051
  13   9   2   2   94281
  13  10   1   2   3080
  13   1  11   1   77
  13   2  10   1   3080
  13   3   9   1   38115
  13   4   8   1   203280
  13   5   7   1   533610
  13   6   6   1   731808
  13   7   5   1   533610
  13   8   4   1   203280
  13   9   3   1   38115
  13  10   2   1   3080
  13  11   1   1   77

  13         sum   438644841

  14   1   1  12   99
  14   1   2  11   4659
  14   2   1  11   4659
  14   1   3  10   69765
  14   2   2  10   172040
  14   3   1  10   69765
  14   1   4   9   460245
  14   2   3   9   2020530
  14   3   2   9   2020530
  14   4   1   9   460245
  14   1   5   8   1533950
  14   2   4   8   10419653
  14   3   3   8   18554641
  14   4   2   8   10419653
  14   5   1   8   1533950
  14   1   6   7   2760990
  14   2   5   7   26837442
  14   3   4   7   73967488
  14   4   3   7   73967488
  14   5   2   7   26837442
  14   6   1   7   2760990
  14   1   7   6   2760990
  14   2   6   6   36580432
  14   3   5   6   144298902
  14   4   4   6   223353280
  14   5   3   6   144298902
  14   6   2   6   36580432
  14   7   1   6   2760990
  14   1   8   5   1533950
  14   2   7   5   26837442
  14   3   6   5   144298902
  14   4   5   5   319684549
  14   5   4   5   319684549
  14   6   3   5   144298902
  14   7   2   5   26837442
  14   8   1   5   1533950
  14   1   9   4   460245
  14   2   8   4   10419653
  14   3   7   4   73967488
  14   4   6   4   223353280
  14   5   5   4   319684549
  14   6   4   4   223353280
  14   7   3   4   73967488
  14   8   2   4   10419653
  14   9   1   4   460245
  14   1  10   3   69765
  14   2   9   3   2020530
  14   3   8   3   18554641
  14   4   7   3   73967488
  14   5   6   3   144298902
  14   6   5   3   144298902
  14   7   4   3   73967488
  14   8   3   3   18554641
  14   9   2   3   2020530
  14  10   1   3   69765
  14   1  11   2   4659
  14   2  10   2   172040
  14   3   9   2   2020530
  14   4   8   2   10419653
  14   5   7   2   26837442
  14   6   6   2   36580432
  14   7   5   2   26837442
  14   8   4   2   10419653
  14   9   3   2   2020530
  14  10   2   2   172040
  14  11   1   2   4659
  14   1  12   1   99
  14   2  11   1   4659
  14   3  10   1   69765
  14   4   9   1   460245
  14   5   8   1   1533950
  14   6   7   1   2760990
  14   7   6   1   2760990
  14   8   5   1   1533950
  14   9   4   1   460245
  14  10   3   1   69765
  14  11   2   1   4659
  14  12   1   1   99

  14         sum   3369276867
\end{verbatim}
\end{scriptsize}
\end{multicols}

\newpage
\subsection{Genus 2}
\begin{multicols}{3}
\begin{scriptsize}
\begin{verbatim}
   d   v   e   f   H
   5   1   1   1   4

   5         sum   4

   6   1   1   2   16
   6   1   2   1   16
   6   2   1   1   16

   6         sum   48

   7   1   1   3   67
   7   1   2   2   169
   7   2   1   2   169
   7   1   3   1   67
   7   2   2   1   169
   7   3   1   1   67

   7         sum   708

   8   1   1   4   237
   8   1   2   3   1072
   8   2   1   3   1072
   8   1   3   2   1072
   8   2   2   2   2664
   8   3   1   2   1072
   8   1   4   1   237
   8   2   3   1   1072
   8   3   2   1   1072
   8   4   1   1   237

   8         sum   9807

   9   1   1   5   667
   9   1   2   4   4736
   9   2   1   4   4736
   9   1   3   3   8560
   9   2   2   3   21113
   9   3   1   3   8560
   9   1   4   2   4736
   9   2   3   2   21113
   9   3   2   2   21113
   9   4   1   2   4736
   9   1   5   1   667
   9   2   4   1   4736
   9   3   3   1   8560
   9   4   2   1   4736
   9   5   1   1   667

   9         sum   119436

  10   1   1   6   1649
  10   1   2   5   16725
  10   2   1   5   16725
  10   1   3   4   47164
  10   2   2   4   115478
  10   3   1   4   47164
  10   1   4   3   47164
  10   2   3   3   206895
  10   3   2   3   206895
  10   4   1   3   47164
  10   1   5   2   16725
  10   2   4   2   115478
  10   3   3   2   206895
  10   4   2   2   115478
  10   5   1   2   16725
  10   1   6   1   1649
  10   2   5   1   16725
  10   3   4   1   47164
  10   4   3   1   47164
  10   5   2   1   16725
  10   6   1   1   1649

  10         sum   1355400

  11   1   1   7   3633
  11   1   2   6   50001
  11   2   1   6   50001
  11   1   3   5   201915
  11   2   2   5   491729
  11   3   1   5   201915
  11   1   4   4   315000
  11   2   3   4   1364986
  11   3   2   4   1364986
  11   4   1   4   315000
  11   1   5   3   201915
  11   2   4   3   1364986
  11   3   3   3   2428862
  11   4   2   3   1364986
  11   5   1   3   201915
  11   1   6   2   50001
  11   2   5   2   491729
  11   3   4   2   1364986
  11   4   3   2   1364986
  11   5   2   2   491729
  11   6   1   2   50001
  11   1   7   1   3633
  11   2   6   1   50001
  11   3   5   1   201915
  11   4   4   1   315000
  11   5   3   1   201915
  11   6   2   1   50001
  11   7   1   1   3633

  11         sum   14561360

  12   1   1   8   7417
  12   1   2   7   132202
  12   2   1   7   132202
  12   1   3   6   721382
  12   2   2   6   1748723
  12   3   1   6   721382
  12   1   4   5   1610617
  12   2   3   5   6908644
  12   3   2   5   6908644
  12   4   1   5   1610617
  12   1   5   4   1610617
  12   2   4   4   10702449
  12   3   3   4   18938994
  12   4   2   4   10702449
  12   5   1   4   1610617
  12   1   6   3   721382
  12   2   5   3   6908644
  12   3   4   3   18938994
  12   4   3   3   18938994
  12   5   2   3   6908644
  12   6   1   3   721382
  12   1   7   2   132202
  12   2   6   2   1748723
  12   3   5   2   6908644
  12   4   4   2   10702449
  12   5   3   2   6908644
  12   6   2   2   1748723
  12   7   1   2   132202
  12   1   8   1   7417
  12   2   7   1   132202
  12   3   6   1   721382
  12   4   5   1   1610617
  12   5   4   1   1610617
  12   6   3   1   721382
  12   7   2   1   132202
  12   8   1   1   7417

  12         sum   150429819

  13   1   1   9   14091
  13   1   2   8   316470
  13   2   1   8   316470
  13   1   3   7   2241162
  13   2   2   7   5412883
  13   3   1   7   2241162
  13   1   4   6   6764142
  13   2   3   6   28779051
  13   3   2   6   28779051
  13   4   1   6   6764142
  13   1   5   5   9681210
  13   2   4   5   63458654
  13   3   3   5   111801142
  13   4   2   5   63458654
  13   5   1   5   9681210
  13   1   6   4   6764142
  13   2   5   4   63458654
  13   3   4   4   172252340
  13   4   3   4   172252340
  13   5   2   4   63458654
  13   6   1   4   6764142
  13   1   7   3   2241162
  13   2   6   3   28779051
  13   3   5   3   111801142
  13   4   4   3   172252340
  13   5   3   3   111801142
  13   6   2   3   28779051
  13   7   1   3   2241162
  13   1   8   2   316470
  13   2   7   2   5412883
  13   3   6   2   28779051
  13   4   5   2   63458654
  13   5   4   2   63458654
  13   6   3   2   28779051
  13   7   2   2   5412883
  13   8   1   2   316470
  13   1   9   1   14091
  13   2   8   1   316470
  13   3   7   1   2241162
  13   4   6   1   6764142
  13   5   5   1   9681210
  13   6   4   1   6764142
  13   7   3   1   2241162
  13   8   2   1   316470
  13   9   1   1   14091

  13         sum   1506841872

  14   1   1  10   25405
  14   1   2   9   700045
  14   2   1   9   700045
  14   1   3   8   6235526
  14   2   2   8   15012496
  14   3   1   8   6235526
  14   1   4   7   24417030
  14   2   3   7   103175785
  14   3   2   7   103175785
  14   4   1   7   24417030
  14   1   5   6   47238510
  14   2   4   6   306159286
  14   3   3   6   537417269
  14   4   2   6   306159286
  14   5   1   6   47238510
  14   1   6   5   47238510
  14   2   5   5   435785878
  14   3   4   5   1173398706
  14   4   3   5   1173398706
  14   5   2   5   435785878
  14   6   1   5   47238510
  14   1   7   4   24417030
  14   2   6   4   306159286
  14   3   5   4   1173398706
  14   4   4   4   1799940644
  14   5   3   4   1173398706
  14   6   2   4   306159286
  14   7   1   4   24417030
  14   1   8   3   6235526
  14   2   7   3   103175785
  14   3   6   3   537417269
  14   4   5   3   1173398706
  14   5   4   3   1173398706
  14   6   3   3   537417269
  14   7   2   3   103175785
  14   8   1   3   6235526
  14   1   9   2   700045
  14   2   8   2   15012496
  14   3   7   2   103175785
  14   4   6   2   306159286
  14   5   5   2   435785878
  14   6   4   2   306159286
  14   7   3   2   103175785
  14   8   2   2   15012496
  14   9   1   2   700045
  14   1  10   1   25405
  14   2   9   1   700045
  14   3   8   1   6235526
  14   4   7   1   24417030
  14   5   6   1   47238510
  14   6   5   1   47238510
  14   7   4   1   24417030
  14   8   3   1   6235526
  14   9   2   1   700045
  14  10   1   1   25405

  14         sum   14732613116
\end{verbatim}
\end{scriptsize}
\end{multicols}

\subsection{Genus 3}
\begin{multicols}{3}
\begin{scriptsize}
\begin{verbatim}
   d   v   e   f   H
   7   1   1   1   30

   7         sum   30

   8   1   1   2   385
   8   1   2   1   385
   8   2   1   1   385

   8         sum   1155

   9   1   1   3   2900
   9   1   2   2   7070
   9   2   1   2   7070
   9   1   3   1   2900
   9   2   2   1   7070
   9   3   1   1   2900

   9         sum   29910

  10   1   1   4   15308
  10   1   2   3   65972
  10   2   1   3   65972
  10   1   3   2   65972
  10   2   2   2   159608
  10   3   1   2   65972
  10   1   4   1   15308
  10   2   3   1   65972
  10   3   2   1   65972
  10   4   1   1   15308

  10         sum   601364

  11   1   1   5   63355
  11   1   2   4   418810
  11   2   1   4   418810
  11   1   3   3   740100
  11   2   2   3   1779193
  11   3   1   3   740100
  11   1   4   2   418810
  11   2   3   2   1779193
  11   3   2   2   1779193
  11   4   1   2   418810
  11   1   5   1   63355
  11   2   4   1   418810
  11   3   3   1   740100
  11   4   2   1   418810
  11   5   1   1   63355

  11         sum   10260804

  12   1   1   6   220244
  12   1   2   5   2054974
  12   2   1   5   2054974
  12   1   3   4   5567550
  12   2   2   4   13314231
  12   3   1   4   5567550
  12   1   4   3   5567550
  12   2   3   3   23377106
  12   3   2   3   23377106
  12   4   1   3   5567550
  12   1   5   2   2054974
  12   2   4   2   13314231
  12   3   3   2   23377106
  12   4   2   2   13314231
  12   5   1   2   2054974
  12   1   6   1   220244
  12   2   5   1   2054974
  12   3   4   1   5567550
  12   4   3   1   5567550
  12   5   2   1   2054974
  12   6   1   1   220244

  12         sum   156469887

  13   1   1   7   668591
  13   1   2   6   8342532
  13   2   1   6   8342532
  13   1   3   5   31916775
  13   2   2   5   76003367
  13   3   1   5   31916775
  13   1   4   4   48937240
  13   2   3   4   203571133
  13   3   2   4   203571133
  13   4   1   4   48937240
  13   1   5   3   31916775
  13   2   4   3   203571133
  13   3   3   3   355620834
  13   4   2   3   203571133
  13   5   1   3   31916775
  13   1   6   2   8342532
  13   2   5   2   76003367
  13   3   4   2   203571133
  13   4   3   2   203571133
  13   5   2   2   76003367
  13   6   1   2   8342532
  13   1   7   1   668591
  13   2   6   1   8342532
  13   3   5   1   31916775
  13   4   4   1   48937240
  13   5   3   1   31916775
  13   6   2   1   8342532
  13   7   1   1   668591

  13         sum   2195431068

  14   1   1   8   1824323
  14   1   2   7   29267487
  14   2   1   7   29267487
  14   1   3   6   149721473
  14   2   2   6   355267058
  14   3   1   6   149721473
  14   1   4   5   324171185
  14   2   3   5   1338142324
  14   3   2   5   1338142324
  14   4   1   5   324171185
  14   1   5   4   324171185
  14   2   4   4   2041388556
  14   3   3   4   3551405485
  14   4   2   4   2041388556
  14   5   1   4   324171185
  14   1   6   3   149721473
  14   2   5   3   1338142324
  14   3   4   3   3551405485
  14   4   3   3   3551405485
  14   5   2   3   1338142324
  14   6   1   3   149721473
  14   1   7   2   29267487
  14   2   6   2   355267058
  14   3   5   2   1338142324
  14   4   4   2   2041388556
  14   5   3   2   1338142324
  14   6   2   2   355267058
  14   7   1   2   29267487
  14   1   8   1   1824323
  14   2   7   1   29267487
  14   3   6   1   149721473
  14   4   5   1   324171185
  14   5   4   1   324171185
  14   6   3   1   149721473
  14   7   2   1   29267487
  14   8   1   1   1824323

  14         sum   28897471080
\end{verbatim}
\end{scriptsize}
\end{multicols}

\newpage
\subsection{Genus 4}
\begin{multicols}{3}
\begin{scriptsize}
\begin{verbatim}
   d   v   e   f   H
   9   1   1   1   900

   9         sum   900

  10   1   1   2   19344
  10   1   2   1   19344
  10   2   1   1   19344

  10         sum   58032

  11   1   1   3   207876
  11   1   2   2   496224
  11   2   1   2   496224
  11   1   3   1   207876
  11   2   2   1   496224
  11   3   1   1   207876

  11         sum   2112300

  12   1   1   4   1510846
  12   1   2   3   6268712
  12   2   1   3   6268712
  12   1   3   2   6268712
  12   2   2   2   14872428
  12   3   1   2   6268712
  12   1   4   1   1510846
  12   2   3   1   6268712
  12   3   2   1   6268712
  12   4   1   1   1510846

  12         sum   57017238

  13   1   1   5   8417332
  13   1   2   4   52864504
  13   2   1   4   52864504
  13   1   3   3   91902888
  13   2   2   3   216973192
  13   3   1   3   91902888
  13   1   4   2   52864504
  13   2   3   2   216973192
  13   3   2   2   216973192
  13   4   1   2   52864504
  13   1   5   1   8417332
  13   2   4   1   52864504
  13   3   3   1   91902888
  13   4   2   1   52864504
  13   5   1   1   8417332

  13         sum   1269067260

  14   1   1   6   38547144
  14   1   2   5   338317960
  14   2   1   5   338317960
  14   1   3   4   890383128
  14   2   2   4   2093639428
  14   3   1   4   890383128
  14   1   4   3   890383128
  14   2   3   3   3622371084
  14   3   2   3   3622371084
  14   4   1   3   890383128
  14   1   5   2   338317960
  14   2   4   2   2093639428
  14   3   3   2   3622371084
  14   4   2   2   2093639428
  14   5   1   2   338317960
  14   1   6   1   38547144
  14   2   5   1   338317960
  14   3   4   1   890383128
  14   4   3   1   890383128
  14   5   2   1   338317960
  14   6   1   1   38547144

  14         sum   24635879496
\end{verbatim}
\end{scriptsize}
\end{multicols}

\subsection{Genus 5}
\begin{multicols}{3}
\begin{scriptsize}
\begin{verbatim}
   d   v   e   f   H
  11   1   1   1   54990

  11         sum   54990

  12   1   1   2   1588218
  12   1   2   1   1588218
  12   2   1   1   1588218

  12         sum   4764654

  13   1   1   3   22482432
  13   1   2   2   52815168
  13   2   1   2   52815168
  13   1   3   1   22482432
  13   2   2   1   52815168
  13   3   1   1   22482432

  13         sum   225892800

  14   1   1   4   211558928
  14   1   2   3   853365360
  14   2   1   3   853365360
  14   1   3   2   853365360
  14   2   2   2   1995345826
  14   3   1   2   853365360
  14   1   4   1   211558928
  14   2   3   1   853365360
  14   3   2   1   853365360
  14   4   1   1   211558928

  14         sum   7750214770
\end{verbatim}
\end{scriptsize}
\end{multicols}

\subsection{Genus 6}
\begin{multicols}{3}
\begin{scriptsize}
\begin{verbatim}
   d   v   e   f   H
  13   1   1   1   5263764

  13         sum   5263764

  14   1   1   2   192834612
  14   1   2   1   192834612
  14   2   1   1   192834612

  14         sum   578503836
\end{verbatim}
\end{scriptsize}
\end{multicols}

\end{document}